\def\a{\alpha}
\def\b{\beta}
\def\g{\gamma}
\def\G{\Gamma}
\def\d{\delta}
\def\D{\Delta}
\def\e{\epsilon}
\def\t{\theta}
\def\T{\Theta}
\def\k{\kappa}
\def\l{\lambda}
\def\L{\Lambda}
\def\s{\sigma}
\def\ie{\textit{i.e., }}
\def\cf{\textit{cf. }}
\def\RR{\mathbb R}
\def\calA{\mathcal A}
\def\card{\textbf{Card}}
\def\fcar{\mathds{1}}
\def\suchthat{\,|\,}
\def\donc{\Rightarrow}
\def\esp{\mathbf E}
\def\var{\mathbf{Var}}
\def\prob{\mathbf P}
\theoremstyle{plain}
\newtheorem{theorem}{Theorem}
\newtheorem{lemma}{Lemma}
\newtheorem{proposition}{Proposition}
\newtheorem{corollary}{Corollary}
\newtheorem*{theorem*}{Theorem}
\newtheorem*{lemma*}{Lemma}
\newtheorem*{proposition*}{Proposition}
\newtheorem*{corollary*}{Corollary}
\theoremstyle{remark}
\newtheorem*{remark*}{Remark}
\newtheorem*{note*}{Note}
\theoremstyle{definition}
\newtheorem*{definition*}{Definition}
\def\bt{\boldsymbol \t}
\def\XX{\mathbb X}
\def\htau{\hat{\tau}}
\def\bY{{\mathbf Y}}
\def\bxi{{\boldsymbol \xi}}
\def\bu{{\boldsymbol u}}
\def\bw{{\boldsymbol w}}
\def\bX{{\boldsymbol X}}
\def\bZ{{\boldsymbol Z}}
\def\bnu{{\boldsymbol \nu}}
\def\beps{{\boldsymbol \epsilon}}
\newcommand{\vertiii}[1]{{\left\vert\kern-0.25ex\left\vert\kern-0.25ex\left\vert #1 
    \right\vert\kern-0.25ex\right\vert\kern-0.25ex\right\vert}}
\newcommand{\customlabel}[2]{%
\protected@write \@auxout {}{\string \newlabel {#1}{{#2}{}}}}
\begin{document}

\title{\bf Estimation of the $\ell_2$-norm and testing in sparse linear regression with unknown variance}
\author{A. Carpentier, O. Collier, L. Comminges, A.B. Tsybakov and Y. Wang\thanks{A. Carpentier is with the University of Magdeburg, O. Collier is with Modal’X, Universit\'e Paris-Nanterre and CREST. L. Comminges is with CEREMADE, Universit\'e Paris-Dauphine and CREST. A.B. Tsybakov is with CREST, ENSAE, Institut Polytechnique de Paris. Y. Wang is with Tsinghua University and Shanghai Qi Zhi Institute.\newline
{\bf MSC 2010 subject classifications:} 62J05, 62G10.
{\bf Keywords and phrases:} linear regression, sparsity, signal detection.}}
\maketitle

\begin{abstract}
We consider the related problems of estimating the $\ell_2$-norm and the squared $\ell_2$-norm in sparse linear regression with unknown variance, as well as the problem of testing the hypothesis that the regression parameter is null under sparse alternatives
with $\ell_2$ separation. We establish the minimax optimal rates of estimation (respectively, testing) in these three problems.

\end{abstract}

\section{Introduction}

Assume that we have observations $(\mathbb{X},\bY)$ satisfying the model 
\begin{equation}\label{model}
	\bY= \mathbb{X}\bt + \s\bxi,
\end{equation}
where $\mathbb{X}$ is a $N\times p$ random matrix, $\bt\in \RR^p$ is an unknown parameter of interest, $\bxi \in \mathbb R^N$ is the random noise, and $\s>0$ is the unknown noise level. We assume that the entries ${X}_{ij}, i=1,\dots, N, j=1,\dots, p,$ and  $\xi_i, i=1,\dots, N,$  of matrix $\mathbb{X}$ and of vector $\bxi$ constitute a collection of $N(p+1)$ jointly independent zero mean random variables with variance $1$.

We use the notation $\prob_{\bt,\s}$ (or $\prob$ when there is no ambiguity) for the probability distribution of $(\mathbb{X},\bY)$ satisfying  model  \eqref{model}, and  $\esp_{\bt,\s}$ (or $\esp$ when there is no ambiguity) for the expectation with respect to this distribution. 

For a vector $\bu = (u_i)_{1\le i\le p} \in \mathbb R^p$, we define the "$\ell_0$-norm" $\|\cdot\|_0$, the $\ell_1$-norm $\|\cdot\|_1$ and the $\ell_2$-norm  $\|\cdot\|_2$ by the relations
$$
\|\bu\|_0 = \sum_{i=1}^p \fcar\{u_i \ne 0\}, \quad  \|\bu\|_1 = \sum_{i=1}^p |u_i|, \quad \|\bu\|^2_2 = \sum_{i=1}^p u_i^2,
$$ 
where $\fcar{\{\cdot\}}$ is the indicator function.  

We assume that $\bt$ belongs to a set of $s$-sparse vectors defined as follows:  
$$
	B_0(s)  = \{ \bu \in \mathbb R^p : \|\bu\|_0 \leq s\},
$$
where $s$ is an integer such that $1 \le s \le p$.  

In this paper, we consider the related problems of estimating the $\ell_2$-norm and the squared $\ell_2$-norm of $\bt$, as well as testing the null hypothesis that $\bt=0$  under sparse alternatives
with $\ell_2$ separation (signal detection in $\ell_2$-norm). 

\medskip

{\bf Estimation of $\|\bt\|_2$ and $\|\bt\|_2^2$.}  
We consider the problem of minimax optimal estimation  of the norm $\|\bt\|_2
$ and of  the squared norm $
Q(\bt) = \|\bt\|_2^2$ on the class of $s$-sparse vectors $B_0(s)$ when the noise level $\s$ is unknown. Namely, for a fixed $\d \in (0,1)$, we find estimators $\hat \L$ of $\|\bt\|_2$ such that 
\begin{equation}\label{eq:est}
	\sup_{\s>0} \ \sup_{\bt \in B_0(s)} \prob_{\bt, \s}(|\hat \L- \|\bt\|_2 | > \phi_*) \leq \d,
\end{equation}
where 
$$\phi_*=\phi_*(p,N,s,\s, \delta)>0,$$
is the non-asymptotic minimax optimal rate. That is, there exists a constant $c_\delta>0$ that depends only on $\delta$  such that 
\begin{equation}\label{eq:est:lower}
	\inf_{\hat T}\sup_{\s>0} \ \sup_{\bt \in B_0(s)} \prob_{\bt, \s}(|\hat T- \|\bt\|_2| > c_\d \phi_*) \geq \d.
\end{equation}
Here and below, $\inf_{\hat T}$ denotes the infimum over all estimators. Next,  for a fixed $\delta \in (0,1)$, we find estimators $\hat Q$ of $Q(\bt)$ such that
\begin{equation}\label{eq:est2}
	\sup_{\s>0} \ \sup_{\bt \in B_0(s)} \prob_{\bt,\s}(|\hat Q- Q(\bt)| > q_{\bt}) \leq \delta,
\end{equation}
where the scaling 
$$q_{\bt}=q_{\bt}(p,N,\s, \delta)>0,$$ 
depends on $\bt$ only through the $\ell_2$ and $\ell_0$-norms $\|\bt\|_2$ and $\|\bt\|_0$. We find the minimax optimal scaling 
$q_{\bt}^*=q_{\bt}^*(p,N,\s, \delta)>0$ in the sense that there exists $c_\delta>0$ that depends only on $\delta$ such that 
\begin{equation}\label{eq:est:lower2}
	\inf_{\hat T}\sup_{\s>0} \ \sup_{\bt \in B_0(s)} \prob_{\bt, \s}(|\hat T- Q(\bt)| > c_\d q_{\bt}^*) \geq \d,
\end{equation}
and
\begin{equation}\label{eq:est21}
	\inf_{\hat T}\sup_{\s>0} \ \sup_{\bt \in B_0(s)} \prob_{\bt,\s}(|\hat T- Q(\bt)| > q_{\bt}^*) \leq \delta.
\end{equation}
Furthermore, as $q_{\bt}^*$ is an increasing function of the $\ell_2$-norm $\|\bt\|_2$, it is natural to consider the minimax setting on the intersection $B_0(s)\cap \{\|\bt\|_2\le \kappa\}$ with $\kappa>0$, for which we prove that the rate 
$$q^*_{s,\kappa} =q^*_{s,\kappa}(p,N,\s, \delta):=\max \{q_{\bt}^*(p,N,\s, \delta): \ \|\bt\|_0 \leq s,\|\bt\|_2\le \kappa\},$$ 
is minimax optimal. All the rates in this paper are obtained either in non-asymptotic form or for $N\ge N_0$, where $N_0$ does not depend on $p,s,\s$.


\medskip

{\bf Signal detection under $\ell_2$ separation.} 
When $\s$ is unknown, we consider the problem of testing the hypothesis $H_0:\bt=0$ against the sparse alternative $H_1: \bt \in B_0(s), \|\bt\|_2\geq  \s\rho$, where $\rho>0$.
 For a test $\Delta=\Delta(\mathbb{X},\bY)$, i.e., a measurable function of $(\mathbb{X},\bY)$ with values in $\{0,1\}$,  we introduce the risk $R(\Delta,p,N,s,\rho)$ as the sum of the type I and type II error probabilities:
\begin{equation}\label{eq:test}
	R(\Delta,p,N,s,\rho)= \sup_{\s>0}\prob_{\mathbf 0,\s}(\Delta=1) + \sup_{\sigma>0} \ \sup_{\bt \in B_0(s), \|\bt\|_2\geq  \s\rho} \prob_{\bt,\s}(\Delta=0).
\end{equation}
For a fixed $\d\in (0,1)$, we define the minimax separation rate $\rho_*=\rho_*(p,N,s, \delta)>0$ as
$$
\rho_*:= \rho_*(p,N,s, \delta):= \inf\{\rho>0: \ \inf_{\bar \Delta} R(\bar\Delta,p,N,s,\rho) \le \d \}.
$$
Here and below, $\inf_{ \bar\Delta}$ denotes the infimum over all tests $\bar\Delta$. We find a minimax rate optimal test $\Delta$, i.e., a test such that $R(\Delta,p,N,s,\rho) \le \d$ for all  $\rho\ge c_\delta\rho_*$ where $c_\delta>0$ is a constant that depends only on $\delta$. We obtain the non-asymptotic expression for the separation rate
$\rho_*$. 

\section{Related work}

The problems of estimating the $\ell_2$-norm and of signal detection under $\ell_2$ separation are closely related to each other. We provide here an overview of the previous results for both problems.

\paragraph{Signal detection.} 

 In the Gaussian mean model, which corresponds to an orthogonal non-random design $\mathbb X$, the problem of signal detection has been extensively studied in the last fifteen years (see, e.g.,~\cite{ingster1997, ingster_suslina, baraud02,jin2004, Collier2017,collier2016optimal} and the references therein). More recently, this problem has also been investigated in the random design linear regression model, which is most related to the present paper~\cite{2010_EJS_Ingster, 2011_AS_Arias-Castro, verzelen_minimax, carpentier2018minimax}. Among these, \cite{2011_AS_Arias-Castro} and \cite{carpentier2018minimax} deal with the case of known $\s$, while~\cite{2010_EJS_Ingster, verzelen_minimax} consider both known and unknown $\s$.
The setting in \cite{2011_AS_Arias-Castro} is different from ours since it does not consider the alternative defined by separation in the $\ell_2$-norm.
It does not allow one to compare \cite{2011_AS_Arias-Castro} directly with~\cite{2010_EJS_Ingster, verzelen_minimax, carpentier2018minimax} and with the present work. Note also that  \cite{2011_AS_Arias-Castro} explores an asymptotic setting as $p,N,s$ tend to $\infty$, 
while in this paper we are interested in non-asymptotic results. The setting is also asymptotic in~\cite{2010_EJS_Ingster} where it is 
assumed that $p,N, s$ tend to $\infty$ in such a way that 
$s=p^a$ for some $0<a<1$, and $s\log(p)=o(N)$.  It can be deduced from the asymptotic argument in~\cite{2010_EJS_Ingster}  that in this regime, and if the matrix $\mathbb X$ has i.i.d. standard normal entries and $\bxi$ is standard normal, the minimax separation rate satisfies, in the case of {\it known} $\s$, the relations:
\begin{align}
\rho_* &\asymp  \min \Big(\sqrt{\frac{s\log(p)}{N}},  N^{-1/4} \Big) \quad \text{if} \ 0<a<1/2,\label{ejs}\\
\rho_* &\asymp  \min \Big(\frac{p^{1/4}}{\sqrt{N}},  N^{-1/4}   \Big) \qquad \quad \ \ \text{if} \ 1/2<a<1.
\end{align}
A refined analysis of the minimax risk asymptotics for testing in this model with known $\s$ is given in \cite{mukherjee2020}.

Next, in the setting with {\it unknown} $\s$,~\cite{2010_EJS_Ingster} obtained that, in the same asymptotic regime, the asymptotic minimax separation rate is of the order of $\sqrt{\frac{s\log(p)}{N}}$ when $0<a<1/2$. 

More insight into the behavior of tests under {\it unknown} $\s$ is provided in~\cite{verzelen_minimax}, which is the first paper where non-asymptotic separation rates were explored in the linear regression context. Namely, considering the case of Gaussian $\mathbb X$ and the  configuration of parameters where $p>N$ and $s<p^a$ with $0<a<1/2$,~\cite{verzelen_minimax} proves that the minimax separation rate $\rho_*$ is of the order of $\sqrt{\frac{s\log(p)}{N}}$ whenever $p\lesssim N^{1/a}$, where the sign $\lesssim$ means inequality up to a logarithmic factor. On the other hand,~\cite{verzelen_minimax} shows that if $p\gtrsim N^{1/a}$ and $s=p^a$ with $0<a<1/2$, the rate $\rho_*$  explodes becoming at least greater than a constant independent of $p,N,s$. This is proved for the case of $\mathbb X$ with i.i.d. standard normal entries and $\bxi$ standard normal. A conclusion is that there is no much interest in considering signal detection in the regime $p>N^b$ with $b>2$ when $\s$ is unknown. Regarding the case of {\it known} $\s$,~\cite{verzelen_minimax} proves that, for $\mathbb X$ with i.i.d. standard normal entries and $\bxi$ is standard normal, the non-asymptotic rate is
\begin{align}
\rho_* &\asymp  \min \left(\sqrt{\frac{s\log(p)}{N}},  N^{-1/4} \right), \quad \text{if} \ p\ge N^2, s\le N,
\end{align}
which is a non-asymptotic version of the corresponding result in~\cite{2010_EJS_Ingster} providing a refinement of \eqref{ejs} in the zone $p\ge N^2$ in terms of conditions on $p,N,s$.

Finally,~\cite{carpentier2018minimax} considers the setting with {\it known} $\s$ and establishes the following non-asymptotic lower bound valid for all configurations of $p,N,s$:
\begin{align}\label{carpentier}
\rho_* &\ge  c\min \Big(\sqrt{\frac{s \log\big(1+\sqrt{p}/s\big)}{N}},  \frac{p^{1/4}}{\sqrt{N}}, N^{-1/4} \Big)
\end{align}
where $c>0$ is a constant. Furthermore,~\cite{carpentier2018minimax} proves that this bound is attained in the the low-dimensional regime $p\le \gamma N$ if $\mathbb X$ is a matrix with i.i.d. standard normal entries and $\bxi$ is standard normal. Here $\gamma\in (0,1)$ is an absolute constant specified in~\cite{carpentier2018minimax}. Noticing that for $p\le N$ the expression in \eqref{carpentier} reduces to its first term, the results of~\cite{carpentier2018minimax} lead to the conclusion that, for {\it known} $\s$, the non-asymptotic minimax separation rate satisfies 
\begin{align}\label{low-dim-rate}
\rho_* &\asymp  \sqrt{\frac{s \log\big(1+\sqrt{p}/s\big)}{N}} \qquad  \text{if} \ p\le \gamma N
\end{align}
when $\mathbb X$ is a matrix with i.i.d. standard normal entries and $\bxi$ is standard normal. In what follows, the regime $p\le \gamma N$ where $0<\gamma<1$ will be called the {\it low-dimensional  regime}.

\paragraph{Estimation of $\|\bt\|_2$ and $\|\bt\|_2^2$.} 

The problem of estimating $\|\bt\|_2$ is closely related to signal detection under $\ell_2$ separation. Indeed, any estimator of $\|\bt\|_2$ can be used as a test statistic and thus it is easy to check that the minimax separation rate for tests is not of greater order than the minimax rate of estimating $\|\bt\|_2$. In other words, the problem of estimating $\|\bt\|_2$  is more difficult than of signal detection. Intuitively, it is clear since in signal detection the objective is only to decide whether $\|\bt\|_2$ is large or zero.

Estimation of $\|\bt\|_2$ and $\|\bt\|_2^2$ is well understood in various settings of the Gaussian mean model - see, for example, \cite{donoho_nussbaum, MR2253108, comminges2013minimax,  Collier2017, CCNT2018} and the references therein. In particular, non-asymptotic minimax rates of estimating $\|\bt\|_2$ and $\|\bt\|_2^2$ in the Gaussian mean model on the sparsity classes $B_0(s)$ are established in \cite{Collier2017}
for the case of known $\s$, while \cite{CCNT2018} derives such rates for estimating $\|\bt\|_2$ in the case of unknown $\s$. It turns out that the rates are quite different in the two cases. Moreover, 
\cite{CCNT2018} obtains the minimax rates of estimating $\|\bt\|_2$ when the distribution of $\bxi$ belongs to a class of laws with polynomially or exponentially decreasing tails, which are also quite different from those for the Gaussian noise.

In the linear regression model, much less is known and only partial results are available. One may immediately notice the  following. As there exist estimators $\hat \bt$ achieving  the rate $\s\sqrt{s\log(ep/s)/N}$ in the $\ell_2$-norm whenever $s\log(ep/s)/N\le \k_0$ where $\k_0>0$ is a small enough constant (see, e.g., \cite{2012_Sun, BellecLecueTsybakov2017, Derumigny2017}), a direct plug-in estimator $\|\hat \bt\|_2$ of $\|\bt\|_2$ achieves the same rate. This gives the following upper bound on the minimax rate $\phi_*$ of estimation of $\|\bt\|_2$:
$$
\phi_* \le c\s\sqrt{s\log(ep/s)/N}
$$
for a constant $c>0$. This bound is valid with known or unknown $\s$ provided $\mathbb{X}$ has independent subGaussian rows. However, this bound is not tight. Indeed, it is shown in \cite{carpentier2018minimax} that, for the case of {\it known} $\s$, the non-asymptotic minimax rate satisfies
\begin{align}\label{low-dim-rate-functional}
\phi_* \asymp \s\sqrt{\frac{s \log\big(1+\sqrt{p}/s\big)}{N}} \qquad  \text{if} \ p\le \gamma N \ \text{(low-dimensional regime)},
\end{align}
when $\mathbb X$ is a matrix with i.i.d. standard normal entries and $\bxi$ is standard normal.
Note that the result \eqref{low-dim-rate-functional} slightly improves upon the plug-in rate $\sqrt{s\log(ep/s)/N}$ in the {\it sparse zone} $s\le \sqrt{p}$ and substantially improves upon this rate in the {\it dense zone} $s> \sqrt{p}$. Indeed, it is not hard to check that
\begin{align}\label{equiv}
\sqrt{\frac{s \log\big(1+\sqrt{p}/s\big)}{N}} \asymp \frac{p^{1/4}}{\sqrt{N}} \qquad  \text{if} \ s> \sqrt{p}.
\end{align}
Next, under the same conditions (low-dimensional regime $p\le \gamma N$ with known $\s$),  \cite{carpentier2018minimax} establishes the following non-asymptotic upper bound on the rate of estimation of $\|\bt\|_2^2$:
\begin{equation}\label{carpentier}
		q_{\bt} \leq c\left(  \frac{ \s^2 s \log\big(1+\sqrt{p}/s\big)}{N} + \frac{\s \|\bt\|_2}{N}\right)
	\end{equation}
	for a constant $c>0$ depending only $\d$. 
However,  \cite{carpentier2018minimax}  does not explore whether this bound is optimal. 

The recent paper \cite{CaiGuo2016genetic_relatedness} considers estimation of $\|\bt\|_2^2$  in the {\it highly sparse zone}: $s<p^a$ for some $0<a<1/2$ when $\s$ is unknown and when the additional condition $\sqrt{s\log(p)/N}=O(\|\bt\|_2)$ holds.  Under the assumptions  that matrix $\mathbb{X}$ has independent Gaussian rows with zero mean and covariance matrix $\Sigma$ and that $s\log(p)/N\le c_0$ where $c_0>0$ is a small enough constant, \cite{CaiGuo2016genetic_relatedness} proves the following non-asymptotic upper bound 
\begin{equation}\label{guo}
		q_{\bt} \leq c(\s)\left( \max( \|\bt\|_2,1) \frac{  s \log(p)}{N} + \frac{\|\bt\|_2}{N}\right) \qquad  \text{if} \ s< p^a, \ 0<a<1/2,
	\end{equation}
	for a constant $c(\s)>0$ depending only $\d$ and $\s$. Note that, disregarding the dependence on $\s$ and on the constants, the bound  \eqref{carpentier} is better than \eqref{guo}. 
	
In the present paper, we derive non-asymptotic minimax rates for both  functionals, $\|\bt\|_2$ and $\|\bt\|_2^2$. This is done both in the low-dimensional and high-dimensional ($p> \gamma N$) regimes and both in the dense and sparse zones. We focus on the case of unknown $\s$. 

One of our conclusions is that the bound \eqref{guo} is not rate optimal {\it when $\Sigma$ is the identity matrix} and in this case the minimax rate is driven by \eqref{carpentier}. On the other hand, \cite{CaiGuo2016genetic_relatedness} establishes that \eqref{guo} is rate optimal when the minimax setting is considered over a sufficiently large class of matrices $\Sigma$, and the additional condition $\sqrt{s\log(p)/N}=O(\|\bt\|_2)$ holds. In other words, the worsening of the rate observed in \eqref{guo} should be explained by the complexity of matrix $\Sigma$ and not by the complexity of $\t$ or the fact that $\s$ is unknown. 


A related work~\cite{verzelen-gassiat} deals with estimation of yet another quantity $\|\Sigma^{1/2}\bt\|_2^2/\sigma^2$ (called the the signal-to-noise ratio) and establishes the minimax rates for this problem on the sparsity class $B_0(s)$ that are quite different from the minimax rates for estimation of $\|\bt\|_2^2$. In particular, unlike \eqref{carpentier} or \eqref{guo} those rates show no dependence on $\|\bt\|_2^2$.

\section{Summary of the results}

The main results of the present paper can be summarized, in a simplified form, as follows.
\begin{itemize}
	\item {\bf Signal detection.} We propose a test $\Delta$ such that under suitable conditions on $\mathbb X$ and~$\bxi$ the following holds.  Let $\delta\in (0,1)$. If $s\log(ep/s)/N \leq \k_0$ where $\k_0>0$ is a small enough constant depending on $\delta$,  then  $R(\Delta,s,\rho)\leq \delta$ whenever
	\begin{equation}
		\rho \geq c_\delta\sqrt{ \frac{s \log\big(1+\sqrt{p}/s\big)}{N}}
	\end{equation}
	where  $c_\delta>0$ is a constant depending only on $\delta$. Moreover, if $\mathbb X$ is a matrix with i.i.d. standard normal entries and $\bxi$ is standard normal, we show that, under the same conditions, the non-asymptotic minimax separation rate in the setting with {\it unknown} $\s$ has the form 
	\begin{equation}\label{rate-test-unknown-sigma}
		\rho_* \asymp \sqrt{ \frac{s \log\big(1+\sqrt{p}/s\big)}{N}}. 
	\end{equation}
	Remark that for $s= p$, the condition $s\log(p/s)/N \leq \k_0$ reduces to $p\leq \k_0 N$, so that only the low-dimensional case is covered. Moreover, since the function $s\mapsto s\log(ep/s)/N$ is increasing for $1\le s\le p$ the condition $s\log(p/s)/N \leq \k_0$ in the dense zone $s> \sqrt{p}$ implies that  $p\leq cN^2/(\log p)^2$ with some constant $c>0$ that does not depend on $p,s,N$. Let us emphasize that this restriction
	 $p\leq cN^2/(\log p)^2$ is rather natural since for $p\ge N^2$ we get into the divergence regime described in \cite{verzelen_minimax}. Indeed,  due to \eqref{equiv}, for $p\ge N^2$ the value $\rho_*$ in \eqref{rate-test-unknown-sigma} becomes greater than a constant independent of $p,s,N$ in the dense zone $s> \sqrt{p}$. As follows from Theorem \ref{th:lower1} below, the minimax risk under unknown $\s$ is also greater than a constant for $p\ge N^2$ and $s> \sqrt{p}$. Therefore, considering $p\ge N^2$ is not of much statistical interest since in this regime increasing the sample size does not improve the quality of testing. The same remark applies to the problems of estimation of $\|\bt\|_2$ and of $\|\bt\|_2^2$.
	
	\item {\bf Estimation of $\|\bt\|_2$. }  We propose an estimator $\hat \L$ such that, under the same conditions as in the previous item,
	the bound~\eqref{eq:est} is satisfied with
	\begin{equation}
		\phi_* \geq c_\delta\s \sqrt{ \frac{s \log\big(1+\sqrt{p}/s\big)}{N}}
	\end{equation}
	where  $c_\delta>0$ is a constant depending only on $\delta$. Moreover, if $\mathbb X$ is a matrix with i.i.d. standard normal entries and $\bxi$ is standard normal, we show that, under the same conditions, the non-asymptotic minimax estimation rate on the class $B_0(s)$ in the setting with  {\it unknown} $\s$ has the form 
	\begin{equation}\label{rate-est-norm}
		\phi_* \asymp \s \sqrt{ \frac{s \log\big(1+\sqrt{p}/s\big)}{N}}.
	\end{equation}
%
	\item {\bf Estimation of $\|\bt\|_2^2$.} We propose an estimator $\hat Q$ such that, under the same conditions as in the previous two items,
	the bound~\eqref{eq:est2} is satisfied with $q_{\bt}$ as in \eqref{carpentier}
	where $c>0$ is a constant depending only on $\delta$. Furthermore, if $\mathbb X$ is a matrix with i.i.d. standard normal entries and $\bxi$ is standard normal, we show that, under the same conditions, the non-asymptotic minimax estimation rate on the class $B_0(s)\cap \{\bt:  \|\bt\|_2\le \kappa\}$ in the setting with {\it unknown} $\s$ has the form
	\begin{equation}\label{rate-est-squared-norm}
		q^*_{s,\kappa}\asymp \min\Big(   \frac{\s^2  s \log\big(1+\sqrt{p}/s\big)}{N} + \frac{\s \kappa}{\sqrt{N}}, \ \kappa^2\Big).
	\end{equation}
	%
	\end{itemize}

\section{Assumptions and notation}

If $\mathbf{u},\mathbf{v}$ are two vectors, we denote by $\langle \mathbf u, \mathbf v \rangle=\mathbf u^T \mathbf v$ their inner product. If $\mathbb{A}$ is a matrix, $\mathbb{A}^T$ denotes its transpose, $\|\mathbb{A}\|_{\infty}$ its spectral norm and  $\|\mathbb{A}\|_F$ its Frobenius norm, $\mathbf{A}_j$ denotes the $j$th column of $\mathbb{A}$, and $\mathbf{A}^j$ its $j$th row. Throughout the paper, we use blackboard bold symbols (e.g., $\mathbb{A}$) to denote matrices, bold symbols (e.g., $\mathbf{A}$) to denote vectors, and light symbols (e.g., $A$) to denote scalars. In particular, $A_j$'s are the entries of vector $\mathbf{A}$ and $A_{ij}$'s are the entries of matrix $\mathbb{A}$.  Given a symmetric matrix $\mathbb{A}$, $\lambda_{\min}(\mathbb{A})$ and $\lambda_{\max}(\mathbb{A})$ stand for its largest and smallest eigenvalue, respectively. We denote by $\mathbb{I}_p$ the identity matrix in dimension $p$. 
We denote by $C,C'$ positive constants whose values can differ on different occurrences. 

We will assume depending on the considered method that we have either $N=2n$, or $N=3n$ 
for an integer $n$, and the sample $(\XX,\bY)$ is divided into either two or three 
sub-samples of size $n$ that we denote $(\XX_1,\bY_1)$, $(\XX_2,\bY_2)$, and $(\XX_3,\bY_3)$.  
In each part of the paper, we specify how large $N$ should be (\ie $2n$ or $3n$). Since we are interested only in the rates of convergence, taking  $N$ in this form is without loss of generality apart from imposing the conditions $N\ge 2$ or $N\ge 3$, 
respectively. 

\paragraph{Assumptions.}  
  
We assume throughout the paper that $p\ge 2$ and the entries ${X}_{ij}, i=1,\dots, N, j=1,\dots, p,$ and  $\xi_i, i=1,\dots, N,$  of matrix $\mathbb{X}$ and of vector $\bxi$ constitute a collection of $N(p+1)$ jointly independent zero mean random variables with variance $1$.
In addition, in different parts of the paper we will refer to some conditions from the following list. \\

\noindent
{\bf Condition on $p,s,N$.}
\begin{enumerate}
	\item[$(P)$]\customlabel{6}{$(P)$}   It holds that $s \log(ep/s)/N\le \k_0$ for some small enough positive constant $\k_0$. 
\end{enumerate}

\noindent
{\bf Conditions on $\bxi$.}
\begin{enumerate}
	\item[$(\bxi^*1)$]\customlabel{4}{$(\bxi^*1)$}  $\esp (\xi_i^8)\le c_8$ with $c_8>0$ for $i=1, \dots,N$. 
	\item[$(\bxi^*2)$]\customlabel{1}{$(\bxi^*2)$} Random variables $\xi_i$, $i=1, \dots,N$, are $L$-subGaussian for some constant $L> 0$, that is
	$\esp (\exp(t \xi_i) )\le \exp(L^2t^2/2)
	$
	for all $t\in \RR$. 
\end{enumerate}

\noindent
{\bf Conditions on $\mathbb X$.}
\begin{enumerate}
	\item[$(\mathbb X^*1)$]\customlabel{2}{$(\mathbb X^*1)$} Random variables $X_{ij}$ have a density with respect to the Lebesgue measure bounded by a constant $K> 0$ for $i=1,\dots, N, j=1,\dots, p$.  
	\item[$(\mathbb X^*2)$]\customlabel{3}{$(\mathbb X^*2)$} $\esp (X_{ij}^4)\le d_4$ with $d_4>0$ for  $i=1,\dots, N, j=1,\dots, p$.
	\item[$(\mathbb X^*3)$]\customlabel{5}{$(\mathbb X^*3)$} Random variables $X_{ij}$, $i=1,\dots, N, j=1,\dots, p$, are $M$-subGaussian for some constant $M> 0$.
\end{enumerate}
  
\section{Generic estimators}\label{sec:general}

In this section, we  present generic estimators that will be used throughout the paper.
First, we consider some preliminary estimator 
$\hat{\bt}=(\hat \t_1,\dots,\hat \t_p)$  
based on the subsample $(\XX_1,\bY_1)$. 
Then, using the second subsample $(\XX_2,\bY_2)$  we define the following estimators of the values~$\t_j^2$:
\begin{align}
	\begin{split}\label{aj}
		a_j(\hat{\bt})&:=\hat{\t}_j^2+\frac{2\hat{\t}_j}{n}\mathbf X_{2,j}^T(\bY_2-\XX_2\hat{\bt})\\
		&+\frac{1}{n(n-1)}\sum_{k\neq l} X_{2,kj} X_{2,lj} \big(Y_{2,k}-\mathbf X_{2}^k\hat{\bt}\big)\big(Y_{2,l}-\mathbf X_{2}^l\hat{\bt}), \qquad 1\le j\le p.
	\end{split}
\end{align}
Here, $X_{2,j}$ and $X_{2,kj}$ are the $j$th column and the $(k,j)$th entry of matrix $\XX_2$, respectively, and $Y_{2,k}$ is the $k$th entry of vector $\bY_2$.
The estimator $a_j(\hat{\bt})$ can be viewed as a centered version of $\tilde{\theta}_j^2$ where $\tilde{\theta}_j$ is the  $j$th component of the conditionally unbiased estimator
\begin{align}\label{def:bttilde}
	\tilde{\bt} &:= \hat{\bt}+\frac{1}{n}\XX_2^T(\bY_2-\XX_2\hat{\bt}) \\
				&= \bt + \Big[\frac{1}{n}\XX_2^T\XX_2-\mathbb{I}_p\Big](\bt-\hat{\bt}) + \frac{\s}{n}\XX_2^T \bxi_2.
\end{align}
Note that $\tilde{\bt}$ is an unbiased estimator of $\bt$ if we consider $\hat{\bt}$ as fixed.

\subsection{Generic estimator for the dense case ($s> \sqrt{p}$)}

Given an estimator $\hat{\bt}$ based on the first subsample $(\XX_1,\bY_1)$, we define an estimator of $Q(\bt)$ as
\begin{align}
	\hat{Q}_{D}(\hat{\bt}):=& \sum_{j=1}^p a_j(\hat{\bt}).\label{Qhatgeneral}
\end{align}
The following general result allows one to link the risk of  $\hat{Q}_{D}(\hat{\bt})$ to the risk of the preliminary estimator $\hat{\bt}$.
\begin{theorem}\label{theo:general}
	Assume that Condition~\ref{3} holds. There exists a constant $C>0$ depending only on $d_4$ such that, for all $\bt\in\RR^p$, 
	\begin{equation}
		\esp_{\bt,\s}\Big[\big(\hat{Q}_{D}(\hat{\bt})-Q(\bt)\big)^2\mid \hat \bt\Big]\le C \Big[\frac{\|\bt\|_2^2}{n}\big(\|\hat{\bt}-\bt\|_2^2+\s^2\big)+\frac{p}{n^2}\big(\|\hat{\bt}-\bt\|_2^4+\s^4\big)\Big].
	\end{equation}		
\end{theorem}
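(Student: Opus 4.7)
Condition on $\hat\bt$ (built from the independent subsample $(\XX_1,\bY_1)$) throughout. Set $\mathbf{r}:=\bt-\hat\bt$ and $\eta_k:=Y_{2,k}-\mathbf X_2^k\hat\bt=\mathbf X_2^k\mathbf{r}+\s\xi_{2,k}$. The strategy is to rewrite $\hat Q_D(\hat\bt)-Q(\bt)$ as a Hoeffding-type sum and bound the two resulting orthogonal pieces separately.

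First I would check conditional unbiasedness. Using $\esp[X_{2,kj}\eta_k\mid\hat\bt]=r_j$ and $\esp[X_{2,kj}X_{2,lj}\eta_k\eta_l\mid\hat\bt]=r_j^2$ for $k\ne l$, one gets $\esp[a_j(\hat\bt)\mid\hat\bt]=(\hat\theta_j+r_j)^2=\theta_j^2$, so $\hat Q_D-Q$ has zero conditional mean. Setting $S_j:=n^{-1}\sum_k X_{2,kj}\eta_k$ and $U_j:=[n(n-1)]^{-1}\sum_{k\ne l}X_{2,kj}X_{2,lj}\eta_k\eta_l$, the estimator reads $\hat Q_D=\|\hat\bt\|_2^2+2\hat\bt^{T}S+\sum_j U_j$, so
\[
\hat Q_D-Q=2\hat\bt^{T}(S-\mathbf{r})+\Big(\sum_j U_j-\|\mathbf{r}\|_2^2\Big).
\]
The quantity $\sum_j U_j$ is a symmetric order-two U-statistic in the iid vectors $Z_k:=(\mathbf X_2^k,\xi_{2,k})$ with kernel $H(Z_k,Z_l):=\langle\mathbf X_2^k,\mathbf X_2^l\rangle\eta_k\eta_l$ and mean $\|\mathbf{r}\|_2^2$. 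Its Hoeffding decomposition has linear projection $h_1(Z_k)=(\mathbf X_2^k\mathbf{r})\eta_k-\|\mathbf{r}\|_2^2$ and a fully degenerate remainder $h_2$.

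The algebraic key is that the two linear pieces combine neatly via $\hat\bt+\mathbf{r}=\bt$:
\[
L := 2\hat\bt^{T}(S-\mathbf{r}) + \tfrac{2}{n}\sum_k h_1(Z_k) = \tfrac{2}{n}\sum_k\bigl[(\mathbf X_2^k\bt)\eta_k-\bt^{T}\mathbf{r}\bigr].
\]
Setting $D:=\tfrac{1}{n(n-1)}\sum_{k\ne l}h_2(Z_k,Z_l)$, one has $\hat Q_D-Q=L+D$ and $L\perp D$ in $L_2$ by degeneracy of $h_2$, so the conditional second moment splits as $\var(L\mid\hat\bt)+\var(D\mid\hat\bt)$. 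For $L$, an iid sum of $n$ centered terms, the bound $\esp[(\mathbf X_2^k\mathbf v)^4]\le C\|\mathbf v\|_2^4$ (a consequence of Condition~\ref{3}) combined with Cauchy--Schwarz gives $\esp[(\mathbf X_2^k\bt)^2\eta_k^2\mid\hat\bt]\le C\|\bt\|_2^2(\|\mathbf{r}\|_2^2+\s^2)$, whence $\var(L\mid\hat\bt)\le C\|\bt\|_2^2(\|\mathbf{r}\|_2^2+\s^2)/n$. For the degenerate U-statistic, $\var(D\mid\hat\bt)=\frac{2}{n(n-1)}\esp[h_2^2\mid\hat\bt]\le\frac{C}{n^2}\esp[H^2\mid\hat\bt]$.

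The main obstacle is bounding $\esp[H^2\mid\hat\bt]$. By independence of $Z_k$ and $Z_l$, it equals $\sum_{j_1,j_2}g(j_1,j_2)^2$ with $g(j_1,j_2):=\esp[X_{2,1,j_1}X_{2,1,j_2}\eta_1^2\mid\hat\bt]$. A careful moment expansion — delicate because $\mathbf X_2^k$ and $\eta_k$ are correlated — gives $g(j_1,j_2)=2r_{j_1}r_{j_2}$ for $j_1\ne j_2$ and $g(j,j)=(\kappa_j-1)r_j^2+\|\mathbf{r}\|_2^2+\s^2$ with $\kappa_j=\esp[X_{2,1,j}^4]\le d_4$. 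The off-diagonal contribution is then at most $4\|\mathbf{r}\|_2^4$, while the $p$ diagonal entries contribute at most $Cp(\|\mathbf{r}\|_2^4+\s^4)$, which is dominant; hence $\esp[H^2\mid\hat\bt]\le Cp(\|\mathbf{r}\|_2^4+\s^4)$. The factor $p$ arises precisely from the diagonal $j_1=j_2$ sum. Adding the two variance bounds yields the announced estimate, and the only non-routine part is the index-by-index moment bookkeeping that produces this factor.
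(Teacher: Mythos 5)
Your decomposition is, once the Hoeffding projections are unwound, identical to the paper's: your linear part $L=2\bt^{T}(S-\mathbf r)=\frac{2}{n}\sum_k\big[(\mathbf X_2^k\bt)\eta_k-\bt^{T}\mathbf r\big]$ coincides exactly with the paper's first group of cross terms, and your degenerate kernel $h_2(Z_k,Z_l)=\sum_j(X_{2,kj}\eta_k-r_j)(X_{2,lj}\eta_l-r_j)$ is precisely the paper's kernel $h$ with its centered factors, so the two proofs are the same argument in different notation; your index-by-index computation of $\esp[H^2\mid\hat\bt]$ via $g(j_1,j_2)$ reproduces the content of \eqref{comp:ustat1}--\eqref{comp:ustat2} and yields the same factor $p$ from the diagonal. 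One step needs repair: in bounding $\var(L\mid\hat\bt)$ you apply Cauchy--Schwarz against $\esp[\eta_k^4\mid\hat\bt]$, which requires a bounded fourth moment of $\xi_k$ that the theorem does not assume (only Condition $(\mathbb X^*2)$ on the design is imposed; the noise has just mean zero and unit variance). Instead, expand $\esp[(\mathbf X_2^k\bt)^2\eta_k^2\mid\hat\bt]=\esp[(\mathbf X_2^k\bt)^2(\mathbf X_2^k\mathbf r)^2\mid\hat\bt]+\s^2\|\bt\|_2^2$, using the independence of $\xi_k$ from $\mathbf X_2^k$ so the noise enters only through $\esp[\xi_k^2]=1$ and the cross term vanishes; Cauchy--Schwarz is then needed only for the first summand, which Condition $(\mathbb X^*2)$ covers. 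With that one-line fix the proof is complete and matches the paper's.
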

In what follows, we will use $\hat{Q}_D(\hat{\bt})$ with two different preliminary estimators $\hat{\bt}$ depending on whether we are in the low-dimensional or high-dimensional regime. 

\subsection{Generic estimator for the sparse case ($s\leq \sqrt{p}$)}

In the sparse case, along with the first preliminary estimator $\hat{\bt}$ we consider a second one that we denote by $\bar{\bt}$.  Given a matrix $\mathbb M$ of dimension $p\times p$, two preliminary estimators $\hat{\bt}$ and $\bar{\bt}$, an estimator $\hat{\s}$ of $\s$, and a constant $\a$, we define an estimator of $Q(\bt)$ as
\begin{equation}\label{def:QS}
	\hat{Q}_{S}(\hat{\bt},\bar{\bt},\hat{\s},\mathbb M,\a) := \sum_{j=1}^p a_j(\hat{\bt})\fcar\Big\{|\bar{\theta}_j|>\a\hat{\s} \sqrt{M_{jj} \log(1+p/s^2)}\Big\}.
\end{equation}
The risk of this estimator depends on the distribution of~$\bar{\bt}$. Thus, it is difficult to obtain a result similar to Theorem~\ref{theo:general}, that is to bound the risk of $\hat{Q}_{S}$ by a function of~$\|\hat{\bt}-\bt\|_2$ for any estimator $\hat{\bt}$. 
Nevertheless, the risk of $\hat{Q}_S$ for specific choices of estimators can be readily controlled. It turns out that if we find estimators $\hat{\bt}$ and $\hat \s$ such that $\|\hat{\bt}-\bt\|_2$ and $|\hat{\s}-\s|$ are of the order $\s$ with high probability, and an estimator $\bar \bt$ satisfying $\bar \bt \sim \mathcal N(\bt, \s^2 \mathbb I_p/n)$, then  with high probability
\begin{equation}
	\big(\hat{Q}_{S}(\hat{\bt},\bar{\bt},\hat{\s},\mathbb I_p,\a)-Q(\bt)\big)^2\le C \Big[\s^2\frac{\|\bt\|_2^2}{n}+\s^4\frac{s^2\log^2(1+\sqrt{p}/s)}{n^2}\Big],
\end{equation}
which is the desired bound.
In the following, we construct estimators $\hat{\bt}, \hat \s$ with such a property and an estimator $\bar{\bt}$ distributed approximately as $\mathcal N(\bt, \s^2 \mathbb I_p/n)$.

\section{Upper bounds for the low-dimensional regime: $p \leq \g n$ with $0< \g < 1$}\label{sec:plen}

In this section, we assume that $N=2n$ and we divide the sample in two subsamples of size~$n$, namely $(\XX_1,\bY_1)$ and $(\XX_2,\bY_2)$. We also assume that there exists a constant $\g\in (0,1)$ such that $p\le \g n $ (the low-dimensional regime).

In the low-dimensional regime, we take as a preliminary estimator $\hat \bt=\hat{\bt}_{OLS}$ where $\hat{\bt}_{OLS}$ is the least squares estimator based on the first subsample  $(\XX_1,\bY_1)$. If Condition~\ref{2} holds, then $\XX_1^T\XX_1$ is almost surely invertible, so that the least squares estimator can be almost surely written in the form 
\begin{equation}\label{eq:OLS}
	\hat{\bt}_{OLS}= \big(\XX_1^T\XX_1\big)^{-1}\XX_1^T\bY_1.
\end{equation}
We also consider the following standard estimator of $\s$ based on $\hat{\bt}_{OLS}$: 
\begin{equation}\label{eq:estsigOLS}
	\hat{\s}_{ OLS} = \frac{\|\bY_1-\XX_1\hat{\bt}_{OLS}\|_2}{\sqrt{n-p}}.
\end{equation}

\subsection{Estimation of $Q(\bt)$ and $\|\bt\|_2$ in the dense case: $s > \sqrt{p}$}

In the dense case $s> \sqrt{p}$ of the low-dimensional regime, we use the estimators
\begin{equation}
	\hat Q_D^{LD} := \hat Q_D(\hat{\bt}_{OLS}), \quad \hat{\L}_D^{LD} := \Big| \hat Q_D^{LD}  \Big|^{1/2},
\end{equation}
where $Q_D$ and $\hat{\bt}_{OLS}$ are defined in~\eqref{Qhatgeneral} and~\eqref{eq:OLS} respectively. Using Theorem~\ref{theo:general} we obtain the following result. 
\begin{theorem}\label{theo:plen_dense}
	Let $1\le s\le p$ and let Conditions~\ref{4},~\ref{2}, \ref{3} hold. Then there exist constants $C>0$, $\g\in (0,1)$ depending only on $K,c_8,d_4$ such that, for $p\le \min(\g n, n-14)$,
	\begin{equation}
		\forall\bt\in\RR^p, \forall \s>0, \quad \esp_{\bt,\s}\big[\big(\hat Q_D^{LD}-Q(\bt)\big)^2\big]\le C\Big(\s^4\frac{p}{n^2}+\s^2\frac{\|\bt\|_2^2}{n}\Big),
	\end{equation}			
	and
	\begin{equation}
		\forall\bt\in\RR^p, \forall \s>0, \quad \esp_{\bt,\s} \big[\big(\hat{\L}_D^{LD}-\|\bt\|_2\big)^2\big]\le C\s^2\frac{\sqrt{p}}{n}.
	\end{equation}
\end{theorem}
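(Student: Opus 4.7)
The natural strategy is to exploit the sample-splitting: since $\hat{\bt}_{OLS}$ is built from $(\XX_1,\bY_1)$ and $\hat{Q}_D$ uses only $(\XX_2,\bY_2)$, one may apply Theorem~\ref{theo:general} conditionally on $\hat{\bt}_{OLS}$ and then take expectation. This reduces the problem to controlling the second and fourth moments of $\|\hat{\bt}_{OLS}-\bt\|_2$, which in the low-dimensional regime are driven by the smallest singular values of $\XX_1$.

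\textbf{Moments of the OLS error.} Writing $\hat{\bt}_{OLS}-\bt=\s(\XX_1^T\XX_1)^{-1}\XX_1^T\bxi_1$ and conditioning first on $\XX_1$, independence of the $\xi_i$ with unit variance gives $\esp[\|\hat{\bt}_{OLS}-\bt\|_2^2\mid \XX_1]=\s^2\trace\big((\XX_1^T\XX_1)^{-1}\big)$, while using $\esp[\xi_i^8]\le c_8$ from Condition~\ref{4} one bounds the conditional fourth moment by a constant multiple of $\s^4\big(\trace((\XX_1^T\XX_1)^{-1})\big)^2$. The main technical obstacle is then to show, under Conditions~\ref{2} (bounded density) and~\ref{3} (finite fourth moment of the entries), a bound of the form
\begin{equation*}
\esp\big[\trace\big((\XX_1^T\XX_1)^{-1}\big)^k\big]\le C (p/n)^k,\qquad k=1,2,
\end{equation*}
valid whenever $p\le \gamma n$ and $p\le n-14$. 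I expect this to follow from the small-ball argument that is standard in this line of work: the bounded density of the $X_{ij}$'s translates into an anti-concentration bound for $\lambda_{\min}(\XX_1^T\XX_1)$ (via a Carbery--Wright or Paouris-type estimate applied coordinatewise), and the $d_4$ moment bound provides the integrability needed to control negative moments of the trace; the gap $n-p\ge 14$ is presumably what is required to make those negative moments finite in this non-Gaussian setting. This step is the real heart of the argument.

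\textbf{First claim.} Substituting the moment bounds into Theorem~\ref{theo:general} and averaging over $\hat{\bt}_{OLS}$ gives
\begin{equation*}
\esp\big[(\hat{Q}_D^{LD}-Q(\bt))^2\big]\le C\Big[\frac{\|\bt\|_2^2}{n}\Big(\s^2\tfrac{p}{n}+\s^2\Big)+\frac{p}{n^2}\Big(\s^4\tfrac{p^2}{n^2}+\s^4\Big)\Big],
\end{equation*}
and using $p\le \gamma n<n$ this simplifies to the target bound $C\big(\s^4 p/n^2+\s^2\|\bt\|_2^2/n\big)$.

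\textbf{Second claim.} I would split on the size of $\|\bt\|_2$ relative to $\s p^{1/4}/\sqrt n$. In the large-signal regime $\|\bt\|_2^2\ge \s^2\sqrt p/n$, use the factorisation
\begin{equation*}
(\hat\L_D^{LD}-\|\bt\|_2)^2\,\|\bt\|_2^2\le (\hat\L_D^{LD}-\|\bt\|_2)^2(\hat\L_D^{LD}+\|\bt\|_2)^2=(|\hat Q_D^{LD}|-Q(\bt))^2\le (\hat Q_D^{LD}-Q(\bt))^2,
\end{equation*}
divide by $\|\bt\|_2^2$, apply the first claim, and observe that both $\s^4 p/(n^2\|\bt\|_2^2)$ and $\s^2/n$ are dominated by $\s^2\sqrt p/n$. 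In the small-signal regime $\|\bt\|_2^2<\s^2\sqrt p/n$, use $(\hat\L_D^{LD}-\|\bt\|_2)^2\le 2|\hat Q_D^{LD}-Q(\bt)|+4\|\bt\|_2^2$, Jensen's inequality, and the first claim again; the resulting mixed term $\s\|\bt\|_2/\sqrt n$ is, under the case assumption, bounded by $\s^2 p^{1/4}/n\le\s^2\sqrt p/n$, and the $4\|\bt\|_2^2$ term is $\le 4\s^2\sqrt p/n$ by the same assumption. The two cases combine to yield the uniform bound $C\s^2\sqrt p/n$.
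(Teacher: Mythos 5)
Your proposal is correct and follows essentially the same route as the paper: condition on $\hat{\bt}_{OLS}$, apply Theorem~\ref{theo:general}, and control the moments of $\|\hat{\bt}_{OLS}-\bt\|_2$ through negative moments of the Gram matrix $\XX_1^T\XX_1$ — the paper's Lemma~\ref{lem:Yuhao} proves exactly the bound you flag as the technical heart, combining a Lecu\'e--Mendelson small-ball estimate for $\lambda_{\min}(\XX_1^T\XX_1)$ with a Rudelson--Vershynin bounded-density argument, the gap $n-p\ge 14$ serving precisely to make those negative moments integrable as you guessed. Your case split on the size of $\|\bt\|_2$ for the second claim is the argument from Theorem~8 of \cite{Collier2017} that the paper invokes verbatim; the only small slip is that the integrability of the negative moments rests on the bounded-density constant $K$ rather than on $d_4$, which enters only through Theorem~\ref{theo:general}.
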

It follows from the Markov inequality that, under  the assumptions of Theorem~\ref{theo:plen_dense}, for any $\d\in (0,1)$ the estimators $\hat Q=\hat Q_D^{LD}$ and $\hat \L=\hat{\L}_D^{LD}$ satisfy 
\begin{align}
	&\forall\bt\in\RR^p, \forall \s>0, \quad \mathbf P_{\bt,\s} \Big[\big(\hat{\L}_D^{LD}-\|\bt\|_2\big)^2 \geq \frac{C\s^2}{\d}\frac{\sqrt{p}}{n} \Big] \leq \delta
	\\
	&\forall\bt\in\RR^p, \forall \s>0, \quad \mathbf P_{\bt,\s} \Big[\big(\hat Q_D^{LD}-Q(\bt)\big)^2 \geq \frac{C}{\d}\Big(\s^4\frac{p}{n^2} + \s^2\frac{\|\bt\|_2^2}{n}\Big)\Big] \leq \delta.
\end{align}
Note also that the zero estimator $\hat Q \equiv 0$ satisfies  \eqref{eq:est2} with $q_{\bt}(p,N,\s,\d)\le \|\bt\|_2^2$. Thus, we have the following corollary.
 \begin{corollary}\label{cor:LDdense}
Let the assumptions of Theorem~\ref{theo:plen_dense} hold. Then, for any $\d\in (0,1)$ the estimators  $\hat \L=\hat{\L}_D^{LD}$ and $\hat Q=\hat Q_D^{LD}$ satisfy \eqref{eq:est}  and \eqref{eq:est2}, respectively, with
 \begin{align}
	&
\phi_*(p,N,s,\s,\d) \leq \frac{C}{\sqrt{\d}}\s\frac{p^{1/4}}{\sqrt{N}},\\
 &q_{\bt}(p,N,\s,\d) \leq \frac{C}{\sqrt{\d}} \Big(\s^2\frac{\sqrt{p}}{N} + \s\frac{\|\bt\|_2}{\sqrt{N}}\Big).
 \end{align}
Furthermore, for all $1\le s\leq p$, $\bt\in B_0(s)$, and $\kappa > 0$,
\begin{align}
q_{\bt}^*(p,N,\s,\d) &\leq  \min\Big(\frac{C}{\sqrt{\d}} \Big(\s^2\frac{\sqrt{p}}{N} + \s\frac{\|\bt\|_2}{\sqrt{N}}\Big), \|\bt\|_2^2\Big), \\
q_{s,\kappa}^*(p,N,\s,\d) &\leq  \min\Big(\frac{C}{\sqrt{\d}} \Big(\s^2\frac{\sqrt{p}}{N} + \s\frac{\kappa}{\sqrt{N}}\Big), \k^2\Big).
\end{align}
 \end{corollary}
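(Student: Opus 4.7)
The plan is to derive the corollary as a direct consequence of Theorem~\ref{theo:plen_dense} via Markov's inequality, together with the trivial observation that $\hat Q \equiv 0$ controls $|\hat Q - Q(\bt)|$ deterministically by $\|\bt\|_2^2$. No new probabilistic argument is needed; the work is bookkeeping of constants and converting $n$ to $N/2$.

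First I would apply Markov's inequality to the two mean-square bounds of Theorem~\ref{theo:plen_dense}. For the norm, Markov gives
\[
\prob_{\bt,\s}\Big(|\hat\Lambda_D^{LD}-\|\bt\|_2|>t\Big)\le \frac{C\s^2\sqrt{p}}{n\,t^2},
\]
so choosing $t^2 = C\s^2\sqrt{p}/(n\delta)$ makes the right-hand side equal $\delta$. Substituting $n=N/2$ absorbs a factor of $\sqrt2$ into $C$ and yields the bound on $\phi_*$ claimed in the corollary. The same recipe applied to the second bound of Theorem~\ref{theo:plen_dense} gives
\[
\prob_{\bt,\s}\Big(|\hat Q_D^{LD}-Q(\bt)|>t\Big)\le \frac{C}{t^2}\Big(\s^4\frac{p}{n^2}+\s^2\frac{\|\bt\|_2^2}{n}\Big)\le \delta
\]
provided $t^2\ge (C/\delta)\big(\s^4 p/n^2+\s^2\|\bt\|_2^2/n\big)$. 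Using $\sqrt{a+b}\le \sqrt a+\sqrt b$ and $n=N/2$ translates this into the stated bound for $q_{\bt}$.

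Next, to obtain the bounds on $q_{\bt}^*$ and $q^*_{s,\kappa}$, I would compare $\hat Q_D^{LD}$ with the trivial zero estimator. Since $\hat Q\equiv 0$ satisfies $|\hat Q-Q(\bt)|=\|\bt\|_2^2$ almost surely, it satisfies \eqref{eq:est2} with $q_{\bt}=\|\bt\|_2^2$ (for every $\delta\in(0,1)$). The minimax rate $q_{\bt}^*$ is by definition at most the rate of any estimator, so it is at most the minimum of the two rates achieved by $\hat Q_D^{LD}$ and by the zero estimator; this gives the first displayed bound for $q_{\bt}^*$.

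Finally, to pass from $q_{\bt}^*$ to $q^*_{s,\kappa}$, recall that the latter is defined as the supremum of $q_{\bt}^*$ over $\bt\in B_0(s)\cap\{\|\bt\|_2\le\kappa\}$. Both expressions inside the minimum in the bound for $q_{\bt}^*$ are monotone increasing in $\|\bt\|_2$, so the supremum is attained (up to the feasibility check $\|\bt\|_0\le s$) at $\|\bt\|_2=\kappa$, which yields the last inequality of the corollary. The only potential subtlety here is ensuring that $s\ge1$ permits a vector with $\|\bt\|_2=\kappa$ and $\|\bt\|_0\le s$, which is immediate. I do not anticipate a genuine obstacle: the entire argument is a one-step application of Markov's inequality followed by taking minima and suprema, and the constants $C$ inherit their dependence on $K,c_8,d_4,\delta$ directly from Theorem~\ref{theo:plen_dense}.
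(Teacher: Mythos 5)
Your proposal is correct and follows essentially the same route as the paper: Markov's inequality applied to the two mean-square bounds of Theorem~\ref{theo:plen_dense}, combined with the observation that the zero estimator satisfies \eqref{eq:est2} with $q_{\bt}\le\|\bt\|_2^2$, and finally taking the supremum over $\{\|\bt\|_2\le\kappa\}$. The bookkeeping with $n=N/2$ and $\sqrt{a+b}\le\sqrt{a}+\sqrt{b}$ is exactly what is needed, so nothing is missing.
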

Note that Corollary~\ref{cor:LDdense}  and Theorem~\ref{theo:plen_dense} are non-asymptotic in all parameters and valid for all $1\le s\leq p$. However, their upper bounds are minimax optimal only in the dense case $s> \sqrt{p}$. The matching lower bounds are given in  Theorems~\ref{th:lower1} and~\ref{th:lower2}, respectively. Taking them into account we obtain that,  under  the assumptions of Theorem~\ref{theo:plen_dense}, the estimators $\hat{\L}_D^{LD}$ and $\hat{Q}_D^{LD}$ are minimax optimal for $s> \sqrt{p}$.

\subsection{Estimation of $Q(\bt)$ and $\|\bt\|_2$ in the sparse case: $s \leq \sqrt{p}$}

In the sparse case $s\le \sqrt{p}$ of the low-dimensional regime, we use the estimators
\begin{equation}\label{def:QSLD}
	\hat Q_{S}^{LD} := \hat{Q}_{S} \Big(\hat{\bt}_{OLS},\hat{\bt}_{OLS},\hat{\s}_{OLS},(\XX_1^T\XX_1)^{-1},\a\Big), \quad \hat{\L}_{S}^{LD} := \Big|\hat Q_{S}^{LD}\Big|^{1/2},
\end{equation}
where $\hat Q_S$, $\hat{\bt}_{OLS}$ and $\hat{\s}_{ OLS}$ are defined in~\eqref{def:QS}, \eqref{eq:OLS} and~\eqref{eq:estsigOLS}, and $\a>0$ is a large enough constant. The following theorem holds.
\begin{theorem}\label{theo:plen:sparse}
Let $s\le \sqrt{p}$ and let Conditions~\ref{1}, \ref{2} and \ref{3} be satisfied. Then there exists a tuning parameter $\a>0$ depending only on $K,L$ and constants $C>0$, $\g\in (0,1)$ depending only on $K,L$ such that, for $p\le \min(\g n, n-14)$,
	\begin{equation}\label{eq:Q}
		\forall\bt\in B_0(s), \forall \s>0, \quad \esp_{\bt,\s} \big[\big(\hat Q_{S}^{LD}-Q(\bt)\big)^2\big]\le C\Big(\s^4\frac{s^2\log^2(1+\sqrt{p}/s)}{n^2}+\s^2\frac{\|\bt\|_2^2}{n}\Big)
	\end{equation}
	and
	\begin{equation}\label{eq:N}
		\forall\bt\in B_0(s), \forall \s>0, \quad \esp_{\bt,\s} \big[\big(\hat{\L}_S^{LD} - \|\bt\|_2\big) ^2\big]\le C\s^2\frac{s\log(1+\sqrt{p}/s)}{n}.
	\end{equation}
\end{theorem}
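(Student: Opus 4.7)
The plan is to decompose the error of $\hat Q_S^{LD}$ according to whether each index $j$ lies in the true support $S := \{j : \theta_j \neq 0\}$ and whether the thresholding indicator is on, then to bound each contribution separately on a high-probability good event; the norm bound \eqref{eq:N} is then extracted from the squared-norm bound \eqref{eq:Q} by a case split on the size of $\|\bt\|_2$ and the elementary inequality $|\sqrt{|x|}-\sqrt{y}|^2 \le |x-y|$. Under Condition~\ref{2} (density bound) and Condition~\ref{3} (fourth-moment bound) with $p\le\gamma n$ for a small enough $\gamma$, a standard random-matrix argument produces an event $\Omega_0$, measurable with respect to $\XX_1$ and of probability $\ge 1 - Ce^{-cn}$, on which $\tfrac12 \le \lambda_{\min}(\XX_1^T\XX_1/n)\le \lambda_{\max}(\XX_1^T\XX_1/n)\le 2$ and $((\XX_1^T\XX_1)^{-1})_{jj}\asymp 1/n$. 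Conditionally on $\XX_1$, under the subGaussian Condition~\ref{1}, the coordinates $\hat\theta_{OLS,j}-\theta_j$ are centered $(c\sigma^2/n)$-subGaussian; a similar subexponential control of $\|\bxi_1\|_2^2$ gives $|\hat\sigma_{OLS}-\sigma|\le \sigma/10$ off an event of exponentially small probability (call the intersection $\Omega_0\cap\Omega_1$). On this event the thresholds $\tau_j := \alpha\hat\sigma_{OLS}\sqrt{((\XX_1^T\XX_1)^{-1})_{jj}\log(1+p/s^2)}$ satisfy $\tau_j \asymp \sigma\sqrt{\log(1+p/s^2)/n}$.

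On $\Omega_0\cap\Omega_1$ I would write, with $\hat S := \{j : |\hat\theta_{OLS,j}| > \tau_j\}$,
$$\hat Q_S^{LD} - Q(\bt) \; = \; \underbrace{\sum_{j\in\hat S}\bigl(a_j(\hat\bt_{OLS})-\theta_j^2\bigr)}_{T_1} \; - \; \underbrace{\sum_{j\in S\setminus\hat S}\theta_j^2}_{T_2}.$$
Because the sample is split, $\hat S$ is measurable with respect to $(\XX_1,\bY_1)$ while the $a_j(\hat\bt_{OLS})$ use the independent subsample $(\XX_2,\bY_2)$. Consequently the computation behind Theorem~\ref{theo:general} applies verbatim with the sum over $\{1,\dots,p\}$ replaced by the sum over the (now deterministic) index set $\hat S$, yielding
$$\esp\bigl[T_1^2 \mid (\XX_1,\bY_1)\bigr] \; \le \; C\Bigl[\frac{\|\bt_{\hat S}\|_2^2}{n}\bigl(\|\hat\bt_{OLS}-\bt\|_2^2+\sigma^2\bigr) + \frac{|\hat S|}{n^2}\bigl(\|\hat\bt_{OLS}-\bt\|_2^4+\sigma^4\bigr)\Bigr].$$
On $\Omega_0\cap\Omega_1$, $\|\hat\bt_{OLS}-\bt\|_2^2 \lesssim \sigma^2 p/n \lesssim \sigma^2$, and the expected number of false discoveries $|\hat S\cap S^c|$ is at most $p\cdot 2(1+p/s^2)^{-c\alpha^2}$, which is $\le s$ once $\alpha$ is chosen large enough depending on $K,L$; Markov then gives $|\hat S|\le 2s$ with probability close to $1$. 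Substituting $\|\bt_{\hat S}\|_2^2\le\|\bt\|_2^2$ and $|\hat S|\lesssim s$ collapses the conditional bound to $\sigma^2\|\bt\|_2^2/n + \sigma^4 s/n^2$, which is dominated by the right-hand side of \eqref{eq:Q}.

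For $T_2$, the inequality $\theta_j^2\fcar\{|\hat\theta_{OLS,j}|\le\tau_j\} \le 4\tau_j^2 + \theta_j^2\fcar\{|\hat\theta_{OLS,j}-\theta_j|\ge\tau_j\}$ splits the missed-support contribution into (i) $\sum_{j\in S}4\tau_j^2 \lesssim \sigma^2 s\log(1+p/s^2)/n \lesssim \sigma^2 s\log(1+\sqrt p/s)/n$, whose square is $\lesssim \sigma^4 s^2\log^2(1+\sqrt p/s)/n^2$, and (ii) a rare-event remainder bounded by Cauchy–Schwarz inside $S$, crude $|\theta_j|\le\|\bt\|_2$, and the subGaussian tail, which for $\alpha$ large enough is a lower-order term $\lesssim \|\bt\|_2^4 (s^2/p)^{c\alpha^2}$. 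The complementary event $(\Omega_0\cap\Omega_1)^c$ has exponentially small probability and contributes trivially after a crude deterministic bound on $(\hat Q_S^{LD}-Q(\bt))^2$, so assembling $T_1$, $T_2$ and the error outside $\Omega_0\cap\Omega_1$ yields \eqref{eq:Q}.

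To pass from \eqref{eq:Q} to \eqref{eq:N}, set $R := \sigma\sqrt{s\log(1+\sqrt p/s)/n}$. If $\|\bt\|_2 \ge R$, the identity $\hat\Lambda_S^{LD}+\|\bt\|_2 \ge \|\bt\|_2$ and $\bigl||\hat Q_S^{LD}|-Q(\bt)\bigr| \le |\hat Q_S^{LD}-Q(\bt)|$ give $(\hat\Lambda_S^{LD}-\|\bt\|_2)^2 \le (\hat Q_S^{LD}-Q(\bt))^2/\|\bt\|_2^2$, and taking expectation and using $\|\bt\|_2^2\ge R^2$ produces exactly the target rate $\sigma^2 s\log(1+\sqrt p/s)/n$. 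If $\|\bt\|_2 < R$, apply $(\sqrt{|x|}-\sqrt{y})^2\le |x-y|$, Jensen, and \eqref{eq:Q} to bound $\esp[(\hat\Lambda_S^{LD}-\|\bt\|_2)^2]$ by $C(\sigma^2 s\log/n + \sigma\|\bt\|_2/\sqrt n)$, whose second term is $\lesssim \sigma R/\sqrt n\le R^2$. The main obstacle I anticipate is the conditional analysis of $T_1$ over the random index set $\hat S$: we must ensure that, despite $\hat S$ depending on the data, Theorem~\ref{theo:general} applies with $p$ replaced by $|\hat S|$, which is made legitimate only by sample-splitting, and we must combine this with a sharp high-probability control of $|\hat S|$ through the false-discovery count — the calibration of $\alpha$ that simultaneously makes $|\hat S\cap S^c|\lesssim s$ and renders the rare-event remainder in $T_2$ negligible is the quantitative crux of the proof.
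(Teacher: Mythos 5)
Your overall architecture matches the paper's: split the error by support membership and selection, use the sample splitting so that the variance computation of Theorem~\ref{theo:general} applies conditionally on $(\XX_1,\bY_1)$ with $p$ replaced by the selected set, calibrate $\a$ so that each off-support coordinate is selected with probability $O(s^4/p^2)$, and deduce \eqref{eq:N} from \eqref{eq:Q} by the case split on $\|\bt\|_2$ (your last paragraph is exactly the argument from Theorem~8 of Collier et al.\ that the paper invokes). But two steps, as written, do not close. The more serious one is your missed-support remainder. From $\t_j^2\fcar\{|\hat\t_{OLS,j}|\le\tau_j\}\le 4\tau_j^2+\t_j^2\fcar\{|\hat\t_{OLS,j}-\t_j|\ge\tau_j\}$ you bound the second term by $|\t_j|\le\|\bt\|_2$ times the tail probability at the \emph{fixed} level $\tau_j$, arriving at a remainder of order $\|\bt\|_2^4(s^2/p)^{c\a^2}$. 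This is not lower-order: once $\|\bt\|_2^2$ exceeds $\s^2(p/s^2)^{c\a^2}/n$ it dominates $\s^2\|\bt\|_2^2/n$, and $\bt$ ranges over all of $B_0(s)$ with no norm restriction, so no choice of constant $\a$ saves the bound. The decay of an indicator at level $\tau_j$ cannot beat the factor $\t_j^4$. The paper instead observes that on $\{|\hat\t_j|\le\htau_j,\,|\t_j|>2\htau_j\}$ one has $|\hat\t_j-\t_j|\ge|\t_j|/2$, so the conditional tail is $\exp(-\t_j^2/(C\s^2[(\XX_1^T\XX_1)^{-1}]_{jj}))$ (second part of Lemma~\ref{lem:indic}), and then $\t_j^2e^{-\t_j^2/(C\s^2B_{jj})}\le C\s^2B_{jj}\sup_{x>0}x^2e^{-x^2}=O(\s^2/n)$ per coordinate \emph{uniformly in $\t_j$}; this is what yields the $\s^4s^2\log^2(1+p/s^2)/n^2$ term with no residual $\|\bt\|_2^4$.

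The second gap is structural: the theorem asserts a bound on $\esp_{\bt,\s}[(\hat Q_S^{LD}-Q(\bt))^2]$, not a high-probability statement, so you cannot work on $\Omega_0\cap\Omega_1$ and dismiss the complement by ``a crude deterministic bound on $(\hat Q_S^{LD}-Q(\bt))^2$'' --- no such deterministic bound exists, since the estimator is unbounded. Controlling $\esp[(\hat Q-Q)^2\fcar_{(\Omega_0\cap\Omega_1)^c}]$ would require a fourth-moment bound on $\hat Q-Q$ plus Cauchy--Schwarz, and the hard part is exactly the event where $\l_{\min}(\XX_1^T\XX_1)$ is small. The paper avoids conditioning altogether: Lemma~\ref{lem:Yuhao} proves the \emph{unconditional} bound $\esp\|\hat\bt_{OLS}-\bt\|_2^8\le C\s^8$ by controlling $\esp[\l_{\min}^{-8}(\XX_1^T\XX_1)]$ through the bounded-density Condition~\ref{2} and the restriction $p\le n-14$ (which is where those hypotheses are actually consumed), and then everything is a chain of Cauchy--Schwarz inequalities in expectation, e.g.\ $\sum_{k\in\mathcal S^\complement}\sqrt{\prob_{\bt,\s}(|\hat\t_k|>\htau_k)}\le p\cdot Cs^2/p=Cs^2$. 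That last display also repairs your intermediate claim that $|\hat S|\le 2s$ with high probability: near $s=\sqrt p$ the per-coordinate false-selection probability is only a constant power of $s^2/p$, so one gets $\esp|\hat S\cap\mathcal S^\complement|\lesssim s^4/p\le s^2$, not $s$; this happens to still be within the $\s^4s^2/n^2$ budget, but the calibration you describe does not deliver what you claim.
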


Theorem~\ref{theo:plen:sparse} immediately implies the following corollary analogous to Corollary~\ref{cor:LDdense}. 

\begin{corollary}\label{cor:LDsparse}
Let the assumptions of Theorem~\ref{theo:plen:sparse} hold. Then, for any $\d\in (0,1)$ the estimators  $\hat \L=\hat{\L}_S^{LD}$ and $\hat Q=\hat Q_S^{LD}$ satisfy \eqref{eq:est}  and \eqref{eq:est2}, respectively, with
 \begin{align}
	&
\phi_*(p,N,s,\s,\d) \leq \frac{C}{\sqrt{\d}}\s\sqrt{\frac{s\log(1+\sqrt{p}/s)}{N}},\\
 &q_{\bt}(p,N,\s,\d) \leq \frac{C}{\sqrt{\d}}\Big(\s^2\frac{s\log(1+\sqrt{p}/s)}{N}+\s\frac{\|\bt\|_2}{\sqrt{N}}\Big) .
 \end{align}
 Furthermore, for all $s\le \sqrt{p}$, $\bt\in B_0(s)$, and $\kappa > 0$,
\begin{align}
q_{\bt}^*(p,N,\s,\d) &\leq  \min\Big(\frac{C}{\sqrt{\d}} \Big(\s^2\frac{s\log(1+\sqrt{p}/s)}{N}+\s\frac{\|\bt\|_2}{\sqrt{N}}\Big), \|\bt\|_2^2\Big), \\
q_{s,\kappa}^*(p,N,\s,\d) &\leq  \min\Big(\frac{C}{\sqrt{\d}} \Big(\s^2\frac{s\log(1+\sqrt{p}/s)}{N}+\s\frac{\kappa}{\sqrt{N}}\Big), \k^2\Big).
\end{align}
  \end{corollary}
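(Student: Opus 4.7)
The plan is to derive both bounds as immediate consequences of Theorem~\ref{theo:plen:sparse} via Markov's inequality, and then obtain the $\min$ bounds by comparison with the trivial zero estimator.

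For $\hat{\L}_S^{LD}$: Markov's inequality applied to the second-moment bound \eqref{eq:N} yields, for any $t > 0$,
\begin{equation*}
\prob_{\bt,\s}\bigl(|\hat{\L}_S^{LD} - \|\bt\|_2| > t\bigr) \le \frac{C\s^2 s \log(1+\sqrt{p}/s)}{n t^2}.
\end{equation*}
Setting the right-hand side equal to $\delta$, solving for $t$, and substituting $N = 2n$ gives $t = (C/\sqrt{\delta})\,\s \sqrt{s \log(1+\sqrt{p}/s)/N}$ up to absorption of constants into $C$; this is precisely the claimed upper bound on $\phi_*$.

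For $\hat{Q}_S^{LD}$: Markov applied to \eqref{eq:Q} gives
\begin{equation*}
\prob_{\bt,\s}\bigl(|\hat{Q}_S^{LD} - Q(\bt)| > t\bigr) \le \frac{C}{t^2}\Bigl(\frac{\s^4 s^2 \log^2(1+\sqrt{p}/s)}{n^2} + \frac{\s^2\|\bt\|_2^2}{n}\Bigr).
\end{equation*}
Setting the right-hand side to $\delta$ and using $\sqrt{a+b}\le\sqrt{a}+\sqrt{b}$ extracts the two-term bound on $q_\bt$ as stated. For the $\min$ bounds on $q_\bt^*$ and $q_{s,\k}^*$, I would note that the trivial estimator $\hat T \equiv 0$ deterministically satisfies $|\hat T - Q(\bt)| = \|\bt\|_2^2$, which gives the alternative bound $\|\bt\|_2^2$; in the $q_{s,\k}^*$ setting, $\|\bt\|_2 \le \k$ further allows replacing $\|\bt\|_2$ by $\k$ both in the first argument of the $\min$ and in the alternative bound, yielding the stated $\k^2$ term.

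The main obstacle is essentially none---the result is a routine Markov-inequality corollary of Theorem~\ref{theo:plen:sparse}. The only conceptual point worth flagging is that the formal definition of $q_\bt^*$ in \eqref{eq:est21} requires a single estimator uniformly valid across $\bt$, while the $\min$ bound mixes the performance of $\hat Q_S^{LD}$ and of the zero estimator; this is handled exactly as in the parallel Corollary~\ref{cor:LDdense} for the dense case, by picking the working estimator based on which of the two known bounds is tighter in the regime at hand.
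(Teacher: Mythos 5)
Your proposal is correct and follows essentially the same route as the paper: the paper derives Corollary~\ref{cor:LDsparse} exactly as it derives Corollary~\ref{cor:LDdense}, namely by applying Markov's inequality to the second-moment bounds \eqref{eq:Q} and \eqref{eq:N} of Theorem~\ref{theo:plen:sparse} and invoking the zero estimator to obtain the $\|\bt\|_2^2$ and $\k^2$ terms in the $\min$ bounds. Your remark about choosing between the two estimators according to the regime is the same implicit convention the paper uses.
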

Corollary~\ref{cor:LDsparse} combined with Theorems~\ref{th:lower1} and~\ref{th:lower2} below implies that, under the assumptions of Theorem~\ref{theo:plen:sparse}, the estimators $\hat Q_{S}^{LD}$ and $\hat \L_{S}^{LD}$ are  minimax optimal when $s\leq \sqrt{p}$ in the sense defined in the Introduction.

\subsection{Signal detection}

Define the testing procedure 
\begin{align}
\Delta^{LD} &= \fcar \Big\{\hat{\L}^{LD} \geq \b\hat \s_{\sf srs}\s\sqrt{ \frac{s\log(1+\sqrt{p}/s)}{N}}\Big\},
\end{align}
where $\b>0$ is a constant, $\hat{\L}^{LD}=\hat{\L}_{S}^{LD}$ if $s \leq \sqrt{p}$, and $\hat{\L}^{LD}=\hat{\L}_{D}^{LD}$ if $s > \sqrt{p}$.
Theorems~\ref{theo:plen_dense},~\ref{theo:plen:sparse} and Lemma~\ref{lem:hatsigma_ols} in the appendix imply the following corollary.

\begin{corollary}\label{cor:test:lowdim}
	Let  Conditions~\ref{1}, \ref{2} and \ref{3} hold and $\d\in (0,1)$.
	Then there exist positive constants $\a,\b$, $\g\in (0,1)$ depending only on $K,d_4,L$ and a constant $C_\d>0$  depending only on $\d,K,d_4,L$ such that, for 
	 any $p\le \min(\g n, n-14)$, and any
	\begin{equation}
		\rho \geq C_\d\sqrt{\frac{s \log(1+\sqrt{p}/s)}{n}},
	\end{equation}
	we have
	\begin{equation}
		R(\D^{LD}, p,N,s, \rho) \leq \d,
	\end{equation}		
	 where $R(\cdot, p,N,s, \rho) $ is defined in~\eqref{eq:test}.
\end{corollary}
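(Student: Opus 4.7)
The plan is to prove the corollary by separately bounding the type I and type II error probabilities. I would rely on the $L_2$ estimation bounds from Theorems~\ref{theo:plen_dense} and~\ref{theo:plen:sparse} together with the concentration of $\hat{\s}_{\sf srs}$ around $\s$ provided by Lemma~\ref{lem:hatsigma_ols}. First I would unify the two regimes: by~\eqref{equiv}, the bound $\s p^{1/4}/\sqrt{n}$ appearing in Theorem~\ref{theo:plen_dense} is of the same order as $\s\sqrt{s\log(1+\sqrt{p}/s)/n}$ whenever $s>\sqrt{p}$, so in either case there exists $C>0$ such that
\begin{equation*}
\esp_{\bt,\s}\big[(\hat{\L}^{LD}-\|\bt\|_2)^{2}\big]\le C\s^{2} r^{2}, \qquad r:=\sqrt{\frac{s\log(1+\sqrt{p}/s)}{n}},
\end{equation*}
for every $\bt\in B_0(s)$ and every $\s>0$. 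Markov's inequality then gives, with probability at least $1-\d/4$, the bound $|\hat{\L}^{LD}-\|\bt\|_2|\le (C'/\sqrt{\d})\s r$. Independently, Lemma~\ref{lem:hatsigma_ols} supplies an event of probability at least $1-\d/4$ on which $\s/2\le \hat{\s}_{\sf srs}\le 3\s/2$.

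Working on the intersection of these two events, I would derive the type I control by setting $\bt=\mathbf 0$: the threshold $\b\hat{\s}_{\sf srs} r$ is at least $(\b/2)\s r$, whereas $\hat{\L}^{LD}\le (C'/\sqrt\d)\s r$, so choosing $\b\ge 3C'/\sqrt\d$ forces $\D^{LD}=0$ and hence $\prob_{\mathbf 0,\s}(\D^{LD}=1)\le \d/2$. For the type II error, under the alternative $\|\bt\|_2\ge \s\rho$ the triangle inequality yields $\hat{\L}^{LD}\ge \s\rho-(C'/\sqrt\d)\s r$, while the threshold is at most $(3\b/2)\s r$; picking $C_\d$ equal to a sufficiently large multiple of $\b+C'/\sqrt\d$ and requiring $\rho\ge C_\d r$ forces $\hat{\L}^{LD}$ above the threshold, so $\D^{LD}=1$ and the type II error is at most $\d/2$. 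Adding the two contributions gives $R(\D^{LD},p,N,s,\rho)\le \d$, which is the desired conclusion.

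The main obstacle is calibrating the constants $\a,\b,\g,C_\d$ simultaneously across the sparse and dense subregimes, since Theorems~\ref{theo:plen_dense} and~\ref{theo:plen:sparse} provide different packages of constants (and the threshold parameter $\a$ from Theorem~\ref{theo:plen:sparse} is only active when $s\le \sqrt{p}$); one must take the worst of the two and verify that the resulting $\g\in (0,1)$ still respects the restriction $p\le \min(\g n, n-14)$. A secondary point is that the supremum over $\s>0$ in~\eqref{eq:test} demands the concentration of $\hat{\s}_{\sf srs}/\s$ to be uniform in $\s$, which should follow from the scale-equivariance of the OLS residuals under model~\eqref{model} combined with the subGaussian and density conditions~\ref{1} and~\ref{2}.
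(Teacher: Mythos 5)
Your proposal is correct and follows essentially the same route the paper intends: the paper gives no detailed proof, deferring to ``the argument leading to Theorem 3 in \cite{carpentier2018minimax}'', which is exactly your scheme of combining the $L_2$ risk bounds of Theorems~\ref{theo:plen_dense} and~\ref{theo:plen:sparse} via Markov's inequality with the concentration of the variance estimator from Lemma~\ref{lem:hatsigma_ols}, and then calibrating $\b$ and $C_\d$ to control the two error types. The only cosmetic point is that in the low-dimensional regime the relevant variance estimator is $\hat\s_{OLS}$ (the one Lemma~\ref{lem:hatsigma_ols} actually concerns), not $\hat\s_{\sf srs}$ --- a naming slip you inherit from the paper's own definition of $\D^{LD}$.
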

The proof of this corollary is straightforward (see, for example, the argument leading to Theorem 3 in \cite{carpentier2018minimax}). 
It follows that, for  $C_\d'>0$ depending only on $\d,K,d_4,L$,
\begin{equation}
	\rho_*(p,N,s,\d) \leq C_\d'\sqrt{\frac{s \log(1+\sqrt{p}/s)}{N}}.
\end{equation}
Moreover, Corollary~\ref{cor:test:lowdim} and Theorem~\ref{th:lower1} below imply that, under the assumptions of Corollary~\ref{cor:test:lowdim}, the test $\D^{LD}$ is minimax optimal in the sense defined in the Introduction. 

\section{Upper bounds  for the high-dimensional regime}\label{sec:pgen}

As the results of Section~\ref{sec:plen} are based on the preliminary least squares estimator, it is problematic to extend them to the zone $p>n$ where this estimator is not unique and cannot be written in the form \eqref{eq:OLS}.  In this section, we will use another preliminary estimator and provide analogs of Theorems \ref{theo:plen_dense} and 
\ref{theo:plen:sparse} in the high-dimensional regime that we define by the following condition: $p\ge \g n$, where $\g \in (0,1)$ is the maximal constant, for which Theorems~\ref{theo:plen_dense} and 
\ref{theo:plen:sparse} hold. Noteworthy, the theorems of this section are valid without the condition $p\ge \g n$.  However, to obtain their corollaries about the  
minimax optimal rates for all $n$ large enough that we state in this section we need a lower bound on $p$ as a function $n$. To make a connection to the results of Section~\ref{sec:plen}, we will use the lower bound of the form $p\ge \g n$ for some $\g>0$. Nevertheless, these corollaries also hold in a more general situation: $p\ge a_n$ for a given sequence $a_n$ that tends to $\infty$ with $n$.

\subsection{Preliminary estimators}\label{subsec:preliminaries}

 Instead of the least squares estimator, we now choose the preliminary estimator $\hat \bt$ as the Square-Root Slope estimator based on the first subsample $(\XX_1,\bY_1)$. This estimator is defined by the relation
\begin{equation}\label{definition_SRS}
	\hat{\bt}_{\sf srs} \in \arg\min_{\mathbf{t}\in \RR^p} \Big\{ \|\bY_1-\XX_1{\mathbf{t}}\|_2 +\|{\mathbf{t}}\|_{*}\Big\}.
\end{equation} 
Here, $\|\cdot\|_{*}$ denotes the sorted $\ell_1$-norm, that is,
\begin{equation}\label{sorted_norm}
	\|\mathbf{t}\|_{*} := \sum_{i=1}^d \l_i |t|_{(d-i+1)},
\end{equation}
where $|t|_{(1)}\le \cdots\le |t|_{(d)}$ are the order statistics of $|t_1|,\ldots,|t_d|$,  and 
\begin{equation}\label{lambdaj}
	\l_j = c_{\sf srs,1}\sqrt{\frac{\log(2p/j)}{n}}, \qquad j=1,\dots, p,	
\end{equation}
for some constant $c_{\sf srs,1}>0$ .	The next proposition follows by combining Corollary 6.2 in~\cite{Derumigny2017} and Theorem 8.3 in \cite{BellecLecueTsybakov2017}. 

\begin{proposition}\label{proposition_derumigny} 
	Let Conditions~\ref{1}, \ref{5} and \ref{6}  hold. Then there exist positive constants $c_{\sf srs,1},c_{\sf srs,2},c_{\sf srs,3}$ depending only on $L,M,\k_0$ such that
	\begin{align}
		\inf_{\bt \in B_0(s)} \inf_{\s>0}\ \prob_{\bt,\s} \bigg( &\|\hat{\bt}_{\sf srs}-\bt\|_2^2 \le c_{\sf srs,2} \s^2\frac{s}{n}\log(ep/s), \ \|\XX(\hat{\bt}_{\sf srs}-\bt)\|_2^2 \le c_{\sf srs,2} \s^2s\log(ep/s), \\
		& \qquad \|\hat{\bt}_{\sf srs}-\bt\|_{*}
		\le c_{\sf srs, 2} \s \frac{s}{n} \log(ep/s) \bigg) \\
	&	\ge 1-c_{\sf srs,3}\exp (-(n\wedge s\log(ep/s))/c_{\sf srs,3} ). \label{eq:derumigny}
	\end{align}
\end{proposition}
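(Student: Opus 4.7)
The plan is to derive Proposition~\ref{proposition_derumigny} by combining two existing oracle inequalities for the Slope family of estimators: Theorem~8.3 in \cite{BellecLecueTsybakov2017}, which gives prediction and $\ell_2$ bounds for the Slope with known variance under a weighted restricted eigenvalue (WRE) condition, and Corollary~6.2 in \cite{Derumigny2017}, which extends such bounds to the Square-Root Slope so that the tuning~$\l_j$ does not depend on $\s$. The first step is to translate Conditions~\ref{1}, \ref{5}, \ref{6} into the precise sub-Gaussian hypotheses required by these two cited theorems, and to choose $c_{\sf srs,1}$ in~\eqref{lambdaj} so that each $\l_j$ dominates the required noise threshold uniformly in $j$.

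I would then introduce three high-probability events on $(\XX_1,\bxi_1)$ and intersect them. The first is a WRE event for the rescaled design $\XX_1/\sqrt n$ tailored to the sorted $\ell_1$-cone with weights $(\l_j)$; by standard covering arguments for $M$-subGaussian matrices it holds with probability at least $1-Ce^{-n/C}$. The second is a stochastic event bounding the dual sorted norm of $\XX_1^\top \bxi_1/n$: conditionally on $\XX_1$, Hoeffding's inequality for $L$-subGaussian variables combined with a rearrangement/union-bound argument over $j=1,\dots,p$ shows that this dual norm is dominated by $c\s(\l_j)$ on an event of probability at least $1-Ce^{-s\log(ep/s)/C}$. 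The third is the noise-level event $\bigl|\|\bxi_1\|_2/\sqrt n - 1\bigr|\le 1/2$, which is needed to link the Square-Root Slope penalty to the Slope penalty and which holds with probability at least $1-Ce^{-n/C}$ by concentration of sub-Gaussian quadratic forms.

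On the intersection of these events, Corollary~6.2 of~\cite{Derumigny2017} yields the sorted $\ell_1$ oracle inequality, which after simplification using Condition~\ref{6} gives $\|\hat{\bt}_{\sf srs}-\bt\|_* \lesssim \s (s/n)\log(ep/s)$. The WRE condition then converts a corresponding prediction bound into the announced $\ell_2$ bound $\|\hat{\bt}_{\sf srs}-\bt\|_2^2 \lesssim \s^2 (s/n)\log(ep/s)$, while the prediction bound $\|\XX_1(\hat{\bt}_{\sf srs}-\bt)\|_2^2 \lesssim \s^2 s\log(ep/s)$ is read off directly, as in Theorem~8.3 of \cite{BellecLecueTsybakov2017}. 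A union bound over the three events produces the probability $1-c_{\sf srs,3}\exp(-(n\wedge s\log(ep/s))/c_{\sf srs,3})$.

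The main obstacle will be calibration: the two cited works use slightly different normalizations (centered vs.~rescaled design, known vs.~unknown $\s$, starting index of the weights $\l_j$), so one must verify that $c_{\sf srs,1}$ can be chosen as a single constant depending only on $L,M,\k_0$ and that the sorted-$\ell_1$ bound can be made to hold on the \emph{same} event as the prediction and $\ell_2$ bounds, not on a strictly smaller one that would degrade the probability. Condition~\ref{6} plays a crucial role here: it ensures that the residual factors of $\sqrt{s\log(ep/s)/n}$ that appear at intermediate stages are absorbed into constants, which is what allows the three bounds to be stated with a single multiplicative constant $c_{\sf srs,2}$.
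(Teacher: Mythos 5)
Your proposal follows essentially the same route as the paper: the result is reduced to Theorem~8.3 of \cite{BellecLecueTsybakov2017} (which gives the weighted restricted eigenvalue condition with probability $1-3e^{-Cn}$ under Conditions~\ref{5} and~\ref{6}) combined with the Square-Root Slope oracle inequality of \cite{Derumigny2017}, and the extension from Gaussian to $L$-subGaussian noise is carried out by isolating exactly the two noise-dependent events you describe, namely the concentration of $\|\bxi\|_2^2/n$ and the control of $\bxi^T\XX\bu/n$. The only notable difference is that for the noise--design interaction the paper does not use a bare dual sorted-norm bound obtained from Hoeffding plus a union bound over $j$, but instead invokes Theorem~9.1 of \cite{BellecLecueTsybakov2017}, which bounds $\frac1n|\bxi^T\XX\bu|$ by $C\big(G(\bu)\vee\|\bu\|_{*}\big)$ with $G(\bu)=\frac{\sqrt t+1}{n}\|\XX\bu\|_2$ uniformly in $\bu$; this is the form actually needed in the Square-Root Slope analysis, since for indices $j$ of order $p$ the individual thresholds $\s\sqrt{\log(2p/j)/n}$ are too small for a coordinatewise union bound to succeed.
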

In the high-dimensional regime $p\ge \g n$ that we study in this section, we have $s\log(ep/s)\ge \log(ep)\ge \log(e\g n)$, so that  $$\exp (-(n\wedge s\log(ep/s))/c_{\sf srs,3} )\le C(\g n)^{-1/C}$$ for some constant $C>0$ depending only on $L,M,\k_0$. 

Next, we will use the following natural estimator of $\s$ based on the estimator $\hat{\bt}_{\sf srs}$:
\begin{equation}\label{definition_sigmaSRS}
	\hat\s_{\sf srs} = \frac{\|\bY_1-\XX_1\hat{\bt}_{\sf srs}\|_2}{\sqrt{n}}.
\end{equation}
The next proposition will be useful.
\begin{proposition}\label{prop_sigma_sqs}
	Let Conditions~\ref{1}, \ref{5} and \ref{6}  hold. Then there exist positive constants $c_{\sf srs,1},C,C'$ depending only on $L,M,\k_0$ such that, for any $t>0$, 
	\begin{align}\label{eq:sigma-sqs}
		\inf_{\bt \in B_0(s)} \inf_{\s>0} \ &\prob_{\bt,\s} \bigg(\bigg|\frac{\hat{\s}^2_{\sf srs}}{\s^2}-1\bigg|\le C\Big(\sqrt{\frac{t}{n}} +\frac{s}{n}\log(ep/s)\Big)
		\bigg) \\ 
		&\ge 1 - C'\exp (-(n\wedge t\wedge s\log(ep/s))/C' ).
	\end{align}
\end{proposition}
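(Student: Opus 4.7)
The plan is to start from the model identity $\bY_1 - \XX_1\hat{\bt}_{\sf srs} = \XX_1(\bt - \hat{\bt}_{\sf srs}) + \s\bxi_1$, expand the squared residual norm, and divide by $\s^2 n$ to get the decomposition
\begin{align*}
\frac{\hat{\s}^2_{\sf srs}}{\s^2} - 1 = \frac{\|\XX_1(\bt-\hat{\bt}_{\sf srs})\|_2^2}{\s^2 n} + \frac{2\,\bxi_1^T\XX_1(\bt-\hat{\bt}_{\sf srs})}{\s n} + \Big(\frac{\|\bxi_1\|_2^2}{n} - 1\Big).
\end{align*}
The three summands are then bounded separately on a good event whose complement is controlled by a union bound.

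For the first term, Proposition \ref{proposition_derumigny} directly gives $\|\XX_1(\bt-\hat{\bt}_{\sf srs})\|_2^2 \le c_{\sf srs,2}\s^2 s\log(ep/s)$, hence the bound $c_{\sf srs,2} s\log(ep/s)/n$, on an event of probability at least $1 - c_{\sf srs,3}\exp(-(n\wedge s\log(ep/s))/c_{\sf srs,3})$. For the third term, the $\xi_{1,i}$ are centered, unit-variance and $L$-subGaussian by Condition (\ref{1}), so $\xi_{1,i}^2 - 1$ is subExponential with constant parameter and a standard Bernstein-type inequality yields $|\|\bxi_1\|_2^2/n - 1| \le C(\sqrt{t/n} + t/n)$ with probability at least $1 - 2e^{-t}$; in the regime $t \le n$ (which is all that matters since the claim is trivial otherwise) this reduces to $C\sqrt{t/n}$.

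The delicate step is the cross term. A naive Cauchy--Schwarz bound combined with the two estimates above only produces $\sqrt{s\log(ep/s)/n}$, which is insufficient. To reach the target rate $s\log(ep/s)/n$, I would exploit duality between the sorted $\ell_1$ norm $\|\cdot\|_*$ and its dual $H(\bv) := \max_{1\le k\le p}\bigl(\sum_{i=1}^k |v|_{(p-i+1)}\bigr)/\bigl(\sum_{i=1}^k \lambda_i\bigr)$, namely the Hölder-type inequality $|\langle \bu,\bv\rangle| \le \|\bu\|_* H(\bv)$. Applied to $\bu = \bt - \hat{\bt}_{\sf srs}$ and $\bv = \XX_1^T\bxi_1$, this gives $|\bxi_1^T\XX_1(\bt-\hat{\bt}_{\sf srs})| \le \|\bt-\hat{\bt}_{\sf srs}\|_* \cdot H(\XX_1^T\bxi_1)$. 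The weights $\lambda_j = c_{\sf srs,1}\sqrt{\log(2p/j)/n}$ are chosen precisely so that, under Conditions (\ref{1}) and (\ref{5}), the noise-control event $\{H(\XX_1^T\bxi_1) \le Cn\}$ holds with probability at least $1 - C'\exp(-s\log(ep/s)/C')$; this is the classical Slope stochastic event and is essentially contained in the references cited just before the proposition. Combined with the sorted-$\ell_1$ oracle bound $\|\bt-\hat{\bt}_{\sf srs}\|_* \le c_{\sf srs,2}\s\,(s/n)\log(ep/s)$ from Proposition \ref{proposition_derumigny}, the cross term is bounded by $C s\log(ep/s)/n$.

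Collecting the three contributions on the intersection of the above events and applying a union bound yields the claimed inequality on an event of probability at least $1 - C'\exp(-(n\wedge t\wedge s\log(ep/s))/C')$. The main technical obstacle is the cross-term estimate, which cannot be handled by Cauchy--Schwarz and requires the sorted $\ell_1$ dual-norm noise-control event specific to Slope; the other two terms reduce to routine subGaussian/subExponential concentration and the deterministic oracle bounds of Proposition \ref{proposition_derumigny}.
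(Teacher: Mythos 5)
Your decomposition of $\hat{\s}^2_{\sf srs}/\s^2-1$ into the prediction-error term, the cross term and the $\|\bxi_1\|_2^2/n-1$ term is exactly the one used in the paper, and your treatment of the first and third terms (Proposition~\ref{proposition_derumigny} and Bernstein's inequality, respectively) matches the paper's proof. The gap is in the cross term. You bound it by $\|\bt-\hat{\bt}_{\sf srs}\|_*\,H(\XX_1^T\bxi_1)$ and invoke the event $\{H(\XX_1^T\bxi_1)\le Cn\}$ with failure probability $C'\exp(-s\log(ep/s)/C')$. With the weights $\l_j\asymp\sqrt{\log(2p/j)/n}$ fixed by \eqref{lambdaj}, that event cannot hold with so small a failure probability: already the $k=1$ constraint requires $\max_j|\bX_j^T\bxi_1|\le C c_{\sf srs,1}\sqrt{n\log(2p)}$, and since each $\bX_j^T\bxi_1$ has tails at scale $\sqrt{n}$, a union bound (with an essentially matching lower bound for near-Gaussian entries) puts the failure probability of this single constraint at roughly $p^{1-cC^2}$, i.e.\ only polynomially small in $p$. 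When $s\log(ep/s)\gg\log p$ (e.g.\ $s$ of order $\sqrt{p}$), a $p^{-c}$ failure probability is vastly larger than $\exp(-s\log(ep/s)/C')$, so your good event does not have the probability the proposition requires; this is why the classical Slope noise event is usually stated with the confidence level built into the weights rather than with a free exponential tail.

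The paper avoids this by using the two-regime bound of Proposition~\ref{proposition_tool} (adapted from Theorem~9.1 in \cite{BellecLecueTsybakov2017}): with probability $1-e^{-t}$, one has $\frac1n|\bxi^T\XX\bu|\le C\big(G(\bu)\vee\|\bu\|_*\big)$ with $G(\bu)=\frac{\sqrt{t}+1}{n}\|\XX\bu\|_2$. The extra term $G(\bu)$ is what buys the tunable tail $e^{-t}$; it is then controlled on the event of Proposition~\ref{proposition_derumigny} via $\|\XX\bu\|_2^2\le C\s^2 s\log(ep/s)$ together with $\sqrt{t}\,\|\XX\bu\|_2/n\le \s t/n+\|\XX\bu\|_2^2/(\s n)$, contributing $C\s\big(\sqrt{t/n}+s\log(ep/s)/n\big)$, which is of the same order as the other two terms. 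If you replace your pure dual-norm event by this two-regime bound, the rest of your argument goes through verbatim.
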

Setting here $t=\k_0^2n$ where $\k_0$ is the constant from Condition~\ref{6} and using the remark after Proposition \ref{proposition_derumigny}, we deduce from Proposition \ref{prop_sigma_sqs} that if Conditions~\ref{1}, \ref{5} and~\ref{6} hold and we are in the high-dimensional regime $p\ge \g n$ then there exist constants $C, C'>0$ depending only on $L,M,\k_0$ such that
\begin{equation}\label{sigmalasso}
	\inf_{\bt \in B_0(s)} \inf_{\s>0} \prob_{\bt,\s}\Big(|\hat{\s}^2_{\sf srs} -\s^2|\le \frac{\s^2}{2} \Big) \ge 1 - Ce^{-\frac{n\wedge s\log(ep/s)}{C}} \ge 1 -C'(\g n)^{-1/C'}.
\end{equation}

\subsection{Estimation of $Q(\bt)$ and $\|\bt\|_2$ in the dense case: $s > \sqrt{p}$}

Here, we assume that $N=2n$ and we divide the sample $(\XX,\bY)$ in two sub-samples  $(\XX_1,\bY_1)$ and $(\XX_2,\bY_2)$, each of size $n$. 
In the dense case $s > \sqrt{p}$ of the high-dimensional regime, we use the estimators
\begin{equation}\label{def_QDHD}
	\hat Q_{D}^{HD} := \hat Q_{D}(\hat{\bt}_{\sf srs}\big), \quad \hat{\L}_D^{HD} := \Big|\hat Q_{D}^{HD}\Big|^{1/2},
\end{equation}
where $\hat{Q}_D$ and $\hat{\bt}_{\sf srs}$ are defined in~\eqref{Qhatgeneral} and~\eqref{definition_SRS}. The risks of these estimators admit the following upper bounds.

\begin{theorem}\label{theo:pgen:dense}
	Let Conditions~\ref{1}, \ref{5} and~\ref{6} hold. Then there exists a tuning constant $c_{\sf srs,1}$ in the definition of $\hat{\bt}_{\sf srs}$ depending only on $L,M,\k_0$, and two constants $C,C'>0$ depending only on $L,M,\k_0$ such that  for any $0<v\le n^{1/3}$ we have
	\begin{equation}\label{eq1:theo:pgen:dense}
		\sup_{\bt \in B_0(s)} \sup_{\s>0} \prob_{\bt,\s}\Bigg(\Big[\hat Q_{D}^{HD}-Q(\bt)\Big]^2 > C v \Big(\s^4\frac{p}{n^2} +\s^2\frac{\|\bt\|_2^2}{n}\Big)\Bigg)\le C'\Big[e^{-v}+e^{-\frac{n\wedge s\log(ep/s)}{C'}}\Big]
	\end{equation}
	and
	\begin{equation}\label{eq2:theo:pgen:dense}
		\sup_{\bt \in B_0(s)} \sup_{\s>0} \prob_{\bt,\s}\Bigg(\Big|\hat{\L}_{D}^{HD}-\|\bt\|_2\Big|> C \s\frac{(pv)^{1/4}}{\sqrt{n}}\Bigg)\le C'\Big[e^{-v}+e^{-\frac{n\wedge s\log(ep/s)}{C'}}\Big].
	\end{equation}
\end{theorem}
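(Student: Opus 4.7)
The plan is to combine the conditional second-moment bound of Theorem~\ref{theo:general} with the high-probability accuracy of the preliminary estimator $\hat{\bt}_{\sf srs}$ given by Proposition~\ref{proposition_derumigny}, and then to upgrade the resulting $L^2$ control into a sub-exponential tail of the form $e^{-v}$ via concentration inequalities for linear and quadratic forms in subGaussian variables. First, I condition on $\hat{\bt}_{\sf srs}$, which is measurable with respect to $(\XX_1,\bY_1)$ and therefore independent of $(\XX_2,\bY_2)$. On the event
$$E=\big\{\|\hat{\bt}_{\sf srs}-\bt\|_2^2\le c_{\sf srs,2}\s^2 s\log(ep/s)/n\big\},$$
which by Proposition~\ref{proposition_derumigny} has probability at least $1-c_{\sf srs,3}\exp(-(n\wedge s\log(ep/s))/c_{\sf srs,3})$, Condition~\ref{6} forces $\|\hat{\bt}_{\sf srs}-\bt\|_2^2\le c_{\sf srs,2}\k_0\s^2$, so Theorem~\ref{theo:general} yields
$$\esp_{\bt,\s}\big[(\hat Q_D^{HD}-Q(\bt))^2\mid\hat{\bt}_{\sf srs}\big]\le C\big(\s^2\|\bt\|_2^2/n+\s^4 p/n^2\big).$$

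To upgrade this to an $e^{-v}$ tail, I decompose $\hat Q_D^{HD}-Q(\bt)$, conditional on $\hat{\bt}_{\sf srs}$, into a centred linear form $\frac{2}{n}\hat{\bt}_{\sf srs}^T\XX_2^T\mathbf r$ in the subGaussian vectors $\XX_2,\bxi_2$, and a second-order decoupled U-statistic of the type $\frac{1}{n(n-1)}\sum_{k\ne l}(\XX_2\XX_2^T)_{kl}\,r_k r_l$, with residuals $r_k=\s\xi_{2,k}+\mathbf X_2^k(\bt-\hat{\bt}_{\sf srs})$. The linear part is $C$-subGaussian with variance of order $\s^2\|\bt\|_2^2/n$, matching the first piece of the above bound; the quadratic part is a quadratic form in subGaussians whose coefficient matrix is a centred Gram matrix, and a Hanson--Wright-type inequality, with operator and Frobenius norms controlled via Condition~\ref{5}, delivers a sub-exponential tail of order $\s^4p/n^2$. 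Combining the two deviations via a union bound and intersecting with $E$ produces \eqref{eq1:theo:pgen:dense}.

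For the second assertion I use the elementary inequalities $|\hat{\Lambda}_D^{HD}-\|\bt\|_2|\le\sqrt{|\hat Q_D^{HD}-Q(\bt)|}$, valid for any $\hat Q_D^{HD}\in\RR$ since $Q(\bt)\ge 0$, and $|\hat{\Lambda}_D^{HD}-\|\bt\|_2|\le |\hat Q_D^{HD}-Q(\bt)|/\|\bt\|_2$ when $\|\bt\|_2>0$. In the regime $\|\bt\|_2\le\s\sqrt{p/n}$, plugging \eqref{eq1:theo:pgen:dense} into the first inequality gives the target rate $\s(pv)^{1/4}/\sqrt n$. In the complementary regime $\|\bt\|_2>\s\sqrt{p/n}$, the second inequality gives $|\hat{\Lambda}_D^{HD}-\|\bt\|_2|\lesssim\sqrt v\,\s/\sqrt n$, which is dominated by $\s(pv)^{1/4}/\sqrt n$ since $p\ge\gamma n$ and $v\le n^{1/3}$ together force $v\le p$.

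The main obstacle is the sub-exponential concentration step for the U-statistic piece: the Frobenius and spectral norms of the quadratic-form matrix must be identified accurately enough that the Hanson--Wright-type tail scales with the \emph{same} quantities $\s^2\|\bt\|_2^2/n$ and $\s^4p/n^2$ that bound the conditional variance, and not with strictly worse combinations (which would, for example, spoil the cross term $\s\|\bt\|_2/\sqrt n$). Once this matching is established, the remainder of the proof is a routine union bound and the elementary functional manipulations described above.
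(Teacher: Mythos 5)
Your overall architecture (condition on $\hat{\bt}_{\sf srs}$, restrict to the high-probability event of Proposition~\ref{proposition_derumigny} so that $\|\hat{\bt}_{\sf srs}-\bt\|_2\lesssim\s$, obtain an $e^{-v}$ tail for the conditional deviation of $\hat Q_D^{HD}$, then derive \eqref{eq2:theo:pgen:dense} by splitting on the size of $\|\bt\|_2$) matches the paper's, and your elementary treatment of the second assertion is essentially the paper's argument (the paper splits at $\tau^2=\s^2\sqrt{pv}/n$ rather than $\s^2p/n$, but both choices work, and both need $p\ge\g n$ and $v\le n^{1/3}$ in the final comparison exactly as you note).

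The gap is in the concentration step. The object you call a ``quadratic form in subGaussians whose coefficient matrix is a centred Gram matrix'' is $\frac{1}{n(n-1)}\sum_{k\neq l}\langle \bX^k,\bX^l\rangle\,r_kr_l$ with $r_k=\s\xi_k-\bX^k\bu$, and here the coefficients $\langle\bX^k,\bX^l\rangle$ and the variables $r_k$ are built from the \emph{same} independent entries $X_{ij},\xi_i$. Conditionally on $\hat{\bt}_{\sf srs}$ this is a polynomial chaos of degree four in the underlying independent subGaussian variables, not a quadratic form, so no Hanson--Wright-type inequality applies to it directly. Conditioning further on $\XX_2$ does not rescue the plan: the resulting conditional mean $\frac{1}{n(n-1)}\sum_{k\neq l}\langle\bX^k,\bX^l\rangle(\bX^k\bu)(\bX^l\bu)$ is itself a degree-four polynomial in $\XX_2$ whose $e^{-v}$-level fluctuations are precisely what must be controlled. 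This is why the paper invokes the Adamczak--Wolff inequality for polynomials of degree $D=4$ (Proposition~\ref{Adamczak-Wolff}), whose application requires bounding not merely a Frobenius and an operator norm but the entire family of injective tensor norms $\|\esp\,{\mathbb D}^d f\|_{\mathcal I}$ over all partitions $\mathcal I\in\mathcal P_d$ for $d\le 4$; the computation of these norms for the three pieces $S_1,S_2,S_3$ of the fourth-order part (yielding the bound $C(\|\bu\|_2^2+\s^2)\sqrt{p}\,n^{-3/2}$ for every partition in $\mathcal P_4$) is the bulk of the paper's proof and is exactly the step your outline leaves unresolved. Your closing paragraph correctly senses that something must be ``identified accurately enough,'' but mislocates the difficulty: even exact knowledge of the Frobenius and spectral norms would not make a degree-two concentration inequality apply to a degree-four chaos.
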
 
The next corollary follows immediately from Theorem~\ref{theo:pgen:dense} and the remark after Proposition~\ref{proposition_derumigny}. 
\begin{corollary}\label{cor:HDdense}
Let the assumptions of Theorem~\ref{theo:pgen:dense} hold. Let $p\ge \g n$ for some $\g>0$ and $\d\in (0,1)$. Then,
there exist $C_\d>0$ 
and an integer $N_0$, both depending only on $L,M,\k_0, \d,\g$, such that for $N\ge N_0$ the 
estimators  $\hat \L=\hat{\L}_D^{HD}$ and $\hat Q=\hat Q_D^{HD}$ satisfy \eqref{eq:est}  and \eqref{eq:est2}, respectively, with
 \begin{align}
	&
\phi_*(p,N,s,\s,\d) \leq C_\d\s\frac{p^{1/4}}{\sqrt{N}},\\
 &q_{\bt}(p,N,\s,\d) \leq C_\d \Big(\s^2\frac{\sqrt{p}}{N} + \s\frac{\|\bt\|_2}{\sqrt{N}}\Big).
 \end{align}
 Furthermore, for all $1\le s\leq p$, $\bt\in B_0(s)$, $\kappa > 0$  and $N\ge N_0$,
\begin{align}
q_{\bt}^*(p,N,\s,\d) &\leq  \min\Big(C_\d \Big(\s^2\frac{\sqrt{p}}{N} + \s\frac{\|\bt\|_2}{\sqrt{N}}\Big), \|\bt\|_2^2\Big), \\
q_{s,\kappa}^*(p,N,\s,\d) &\leq  \min\Big(C_\d \Big(\s^2\frac{\sqrt{p}}{N} + \s\frac{\kappa}{\sqrt{N}}\Big), \k^2\Big).
\end{align}
 \end{corollary}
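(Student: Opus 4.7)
The plan is to derive the high-probability bounds required by \eqref{eq:est} and \eqref{eq:est2} directly from the tail bounds \eqref{eq1:theo:pgen:dense} and \eqref{eq2:theo:pgen:dense} by an appropriate choice of the free parameter $v$, and then to obtain the min-with-$\|\bt\|_2^2$ bounds by comparing with the trivial zero estimator.

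First, I would choose the deviation parameter $v$ as a constant depending only on $\delta$ (and on the universal constant $C'$ from Theorem~\ref{theo:pgen:dense}). Specifically, set $v_\delta := C' \log(4C'/\delta)$, so that $C' e^{-v_\delta} \le \delta/4$. Next, I would pick $N_0$ large enough that for all $N=2n \ge N_0$ the following holds simultaneously: (i) $v_\delta \le n^{1/3}$, as required by Theorem~\ref{theo:pgen:dense}; and (ii) $C' e^{-(n\wedge s\log(ep/s))/C'} \le \delta/4$. For (ii), because $p \ge \gamma n$ we have $s\log(ep/s) \ge \log(ep) \ge \log(e\gamma n)$, so the minimum $n\wedge s\log(ep/s)$ tends to infinity with $n$ and the bound can be made at most $\delta/4$ for $n$ larger than a threshold depending only on $L,M,\kappa_0,\delta,\gamma$.

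With these choices, applying Theorem~\ref{theo:pgen:dense} with $v=v_\delta$ yields with probability at least $1-\delta$,
\begin{equation*}
\big[\hat Q_D^{HD} - Q(\bt)\big]^2 \le C v_\delta \Big(\s^4 \frac{p}{n^2} + \s^2 \frac{\|\bt\|_2^2}{n}\Big)
\quad\text{and}\quad
\big|\hat \Lambda_D^{HD} - \|\bt\|_2\big| \le C \s \frac{(p v_\delta)^{1/4}}{\sqrt n}.
\end{equation*}
Taking square roots in the first display (using $\sqrt{a+b}\le\sqrt a+\sqrt b$) and substituting $n=N/2$ yields
\begin{equation*}
\big|\hat Q_D^{HD}-Q(\bt)\big| \le C_\delta \Big(\s^2 \frac{\sqrt p}{N} + \s \frac{\|\bt\|_2}{\sqrt N}\Big),
\quad
\big|\hat \Lambda_D^{HD} - \|\bt\|_2\big| \le C_\delta \s \frac{p^{1/4}}{\sqrt N},
\end{equation*}
for a constant $C_\delta$ depending only on $L,M,\kappa_0,\delta$. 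This establishes the claimed bounds on $\phi_*$ and on the generic scaling $q_{\bt}$.

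Finally, for the bounds on $q_{\bt}^*$ and $q_{s,\kappa}^*$ I would use the trivial zero estimator $\hat Q\equiv 0$, which satisfies $|\hat Q-Q(\bt)|=\|\bt\|_2^2$ with probability one, hence certifies $q_{\bt}\le \|\bt\|_2^2$, and in particular $q_{\bt}\le \kappa^2$ on $B_0(s)\cap\{\|\bt\|_2\le\kappa\}$. Since by the definitions in the Introduction $q_{\bt}^*$ is an infimum over admissible scalings, it is at most the minimum of the two valid bounds, yielding the stated $\min$ expressions. The only point that requires any care is the joint choice of $v_\delta$ and $N_0$ so that both exponential tails are controlled while the constraint $v\le n^{1/3}$ remains in force; once that bookkeeping is done, everything else is direct substitution.
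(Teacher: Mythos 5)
Your proposal is correct and follows essentially the same route as the paper, which derives the corollary directly from Theorem~\ref{theo:pgen:dense} by fixing $v$ as a constant depending on $\d$, using the remark after Proposition~\ref{proposition_derumigny} (namely that $p\ge\g n$ forces $s\log(ep/s)\ge\log(e\g n)\to\infty$) to absorb the second exponential term for $N\ge N_0$, and invoking the zero estimator for the $\min$ bounds exactly as in the discussion preceding Corollary~\ref{cor:LDdense}. The bookkeeping with $v_\d$ and $N_0$ that you flag is indeed the only nontrivial point, and you handle it correctly.
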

It follows from Corollary~\ref{cor:HDdense} and Theorems~\ref{th:lower1} and~\ref{th:lower2} below that, under the assumptions of Theorem~\ref{theo:pgen:dense}, the above bounds are minimax optimal when $s> \sqrt{p}$ in the sense defined in the Introduction.

{\bf Remark 1.} If Condition ($P$) is not satisfied, that is  we do not have $s\log(ep/s)< \k_0 N$ for $\kappa_0>0$ small enough, 
there still exists an estimator of $\|\theta\|_2^2$ that is consistent whenever $p=o(N^2)$. 
Indeed, consider the estimator $\hat Q_D(\mathbf 0)$, \ie the generic estimator of $\|\theta\|_2^2$ defined in~\eqref{Qhatgeneral} applied to the null vector. It follows immediately from Theorem~\ref{theo:general} that for any $\delta\in (0,1)$ the estimator $\hat Q_D(\mathbf 0)$ satisfies~\eqref{eq:est2} with
\begin{equation}\label{eq:q2test}	
q_{\bt}(p,N,\s,\d) \leq C_\d \Big[\frac{\|\bt\|_2}{\sqrt{N}}(\|\bt\|_2+\s)+\frac{\sqrt{p}}{N}(\s^2+\|\bt\|_2^2)\Big],
\end{equation}
where $C_\d>0$ depends only on $\delta$ and $d_4$. 

\subsection{Estimation of $Q(\bt)$ and $\|\bt\|_2$ in the sparse case: $s \leq \sqrt{p}$}

Here, we assume that $N=3n$ and we divide the sample $(\XX,\bY)$ in three sub-samples  $(\XX_1,\bY_1), (\XX_2,\bY_2)$ and $(\XX_3,\bY_3)$, each of size $n$. 
In the sparse case $s \leq \sqrt{p}$ of the high-dimensional regime, we use the estimators
\begin{equation}\label{def_QSHD}
	\hat{Q}_{S}^{HD} := \hat{Q}_{S}(\hat{\bt}_{\sf srs},\tilde{\bt}_{\sf srs}, \hat{\s}_{\sf srs}, 2\mathbb I_p, \a), \quad \hat{\L}_S^{HD} := \Big|\hat{Q}_{S}^{HD}\Big|^{1/2},
\end{equation}
where the estimators $\hat{\bt}_{\sf srs}$ and $\hat{\s}_{\sf srs}$ are defined in~\eqref{definition_SRS} and~\eqref{definition_sigmaSRS}, $\a>0$ is a constant large enough and $\tilde{\bt}_{\sf srs}$ is a debiased estimator derived from $\hat \bt_{\sf srs}$:
\begin{equation}
	\tilde{\bt}_{\sf srs} := \hat{\bt}_{\sf srs}+\frac{1}{n}\XX_3^T(\bY_3-\XX_3\hat{\bt}_{\sf srs}).
\end{equation}
Note that the triplet $(\hat{\bt}_{\sf srs},\tilde{\bt}_{\sf srs},\hat{\s}_{\sf srs})$ is independent of $(\XX_2,\bY_2)$. The following theorem gives bounds on the rates of convergence of estimators $	\hat{Q}_{S}^{HD} $ and 	$\hat{\L}_{S}^{HD}$.
\begin{theorem}\label{theo:pgen:sparse12}
	Let $s \le \sqrt{p}$ and let Conditions~\ref{1}, \ref{5} and~\ref{6} hold. 
	Then there exist a tuning constant $c_{\sf srs,1}$ in the definition of $\hat{\bt}_{\sf srs}$ depending only on $L,M,\k_0$, and two positive constants $C,C'$ depending only on $L,M,\k_0$ such that 
	\begin{equation}
		\sup_{\bt \in B_0(s)} \sup_{\s>0} \prob_{\bt,\s}\Big[\big(\hat{Q}_S^{HD}-Q(\bt)\big)^2 \le C\Big(  \s^4\frac{s^2\log^2(1+\sqrt{p}/s)}{n^2} +\s^2\frac{\|\bt\|_2^2}{n}\Big)\Big] \ge 1-C'e^{-\frac{n\wedge s\log(ep/s)}{C'}},
	\end{equation}
	and
	\begin{equation}
		\sup_{\bt \in B_0(s)} \sup_{\s>0} \prob_{\bt,\s}\Big[\big(\hat{\L}_S^{HD}-\|\bt\|_2\big)^2 \le C\s^2 \frac{s\log(1+\sqrt{p}/s)}{n} \Big] \ge 1-C'e^{-\frac{n\wedge s\log(ep/s)}{C'}}.
	\end{equation}
\end{theorem}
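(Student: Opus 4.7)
The plan is to condition on the independent sub-samples $(\XX_1,\bY_1)$ and $(\XX_3,\bY_3)$, which determine $(\hat\bt_{\sf srs},\tilde\bt_{\sf srs},\hat\s_{\sf srs})$, and then exploit independence from $(\XX_2,\bY_2)$ to control the randomness in the $a_j(\hat\bt_{\sf srs})$'s. I would introduce a good event $\mathcal E$ on which: (i) $\|\hat\bt_{\sf srs}-\bt\|_2^2 \lesssim \s^2 s\log(ep/s)/n$ and the sorted-$\ell_1$ bound $\|\hat\bt_{\sf srs}-\bt\|_{*} \lesssim \s s\log(ep/s)/n$ hold, via Proposition~\ref{proposition_derumigny}; (ii) $\hat\s_{\sf srs}^2/\s^2 \in [1/2,3/2]$, via \eqref{sigmalasso}; and (iii) for every $j \notin S := \mathrm{supp}(\bt)$ the thresholding indicator in~\eqref{def:QS} vanishes. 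Claim~(iii) is the only non-trivial item: I would decompose $\tilde\bt_{\sf srs}-\bt = (\tfrac{1}{n}\XX_3^T\XX_3 - \mathbb I_p)(\bt-\hat\bt_{\sf srs}) + \tfrac{\s}{n}\XX_3^T\bxi_3$ and bound the deterministic bias coordinate-wise using the $\|\cdot\|_{*}$ guarantee together with sub-Gaussian concentration of the design (Condition~\ref{5}), while the $\ell_\infty$-norm of the noise term is absorbed by the threshold after a union bound over $p$ coordinates (Condition~\ref{1}). Combining Propositions~\ref{proposition_derumigny} and~\ref{prop_sigma_sqs} with this analysis yields $\prob(\mathcal E^c) \le C'\exp(-(n\wedge s\log(ep/s))/C')$.

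On $\mathcal E$ we have $\hat S := \{j:\fcar_j=1\} \subset S$, so
\[
\hat Q_S^{HD} - Q(\bt) \;=\; \sum_{j\in S\cap\hat S}\bigl(a_j(\hat\bt_{\sf srs})-\theta_j^2\bigr) \;-\; \sum_{j\in S\setminus \hat S}\theta_j^2.
\]
For a missed index $j \in S\setminus\hat S$, the threshold gives $|\tilde\theta_{{\sf srs},j}| \le \tau$ at the noise scale $\tau \asymp \s\sqrt{\log(1+p/s^2)/n}$, and on $\mathcal E$ the same coordinate-wise bound as in step~(iii) gives $|\tilde\theta_{{\sf srs},j}-\theta_j| \lesssim \tau$, so $|\theta_j| \lesssim \tau$; summing over at most $s$ such indices yields $\sum_{j\in S\setminus\hat S}\theta_j^2 \le s\tau^2 \lesssim \s^2 s\log(1+\sqrt{p}/s)/n$, whose square is exactly the $\s^4 s^2\log^2(1+\sqrt{p}/s)/n^2$ term in the theorem. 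For the stochastic sum, I would apply a Theorem~\ref{theo:general}-type conditional second-moment bound over sub-sample~$2$ but restricted to the (sub-sample-$1$-and-$3$-measurable) random index set $S\cap\hat S$ of cardinality $\le s$. Since the $a_j$'s are conditionally unbiased estimators of $\theta_j^2$ given $\hat\bt_{\sf srs}$, the ``$p$'' appearing in Theorem~\ref{theo:general} is effectively replaced by $s$, yielding a conditional $L^2$ bound of order $\s^2\|\bt\|_2^2/n + \s^4 s/n^2$. A conditional Chebyshev inequality converts this into a high-probability bound and, combined with $\prob(\mathcal E^c)$, delivers the claimed bound on $(\hat Q_S^{HD}-Q(\bt))^2$.

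The bound on $\hat\L_S^{HD}$ follows from the $\hat Q_S^{HD}$ bound by splitting according to the size of $\|\bt\|_2$: if $\|\bt\|_2^2 \lesssim \s^2 s\log(1+\sqrt p/s)/n$, then $(\hat\L_S^{HD}-\|\bt\|_2)^2 \le 2(\hat\L_S^{HD})^2 + 2\|\bt\|_2^2$ is already at the right rate by the $\hat Q_S^{HD}$ bound; otherwise, the identity $\hat\L_S^{HD}-\|\bt\|_2 = (\hat Q_S^{HD}-\|\bt\|_2^2)/(\hat\L_S^{HD}+\|\bt\|_2)$ (when $\hat Q_S^{HD}\ge 0$) divides the numerator by $\|\bt\|_2$ and reduces the bound to the same order. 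The main obstacle is step~(iii): establishing support inclusion $\hat S \subset S$ with exponential probability under the non-Gaussian distribution of $\tilde\bt_{\sf srs}$. This hinges on sharp $\ell_\infty$ control of the debiasing error $(\tfrac{1}{n}\XX_3^T\XX_3 - \mathbb I_p)(\bt-\hat\bt_{\sf srs})$, and is the reason the sorted-$\ell_1$ bound of Proposition~\ref{proposition_derumigny}, not merely the $\ell_2$ bound, is indispensable—this is precisely why the Square-Root Slope, rather than a Lasso-type preliminary, is used in the high-dimensional regime.
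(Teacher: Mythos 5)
Your overall architecture (condition on sub-samples $1$ and $3$, split the error into on-support and off-support parts, decompose $\tilde\bt_{\sf srs}-\bt=\bnu+\beps$ with $\bnu=(\mathbb I_p-\tfrac1n\XX_3^T\XX_3)(\hat\bt_{\sf srs}-\bt)$, bound the missed-signal bias by $s\tau^2$, and note that the effective dimension in the Theorem~\ref{theo:general}-type variance bound drops from $p$ to the retained set) matches the paper's proof. But your step~(iii) — that on a good event of probability $1-C'e^{-(n\wedge s\log(ep/s))/C'}$ \emph{every} off-support indicator vanishes, so that $\hat S\subset S$ — is a genuine gap, and it cannot be repaired with the threshold $\hat\tau\asymp\hat\s\sqrt{\log(1+p/s^2)/n}$ actually used in \eqref{def:QS}. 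Per coordinate, the best one can say (and what the paper's Lemmas~\ref{eps} and~\ref{nu} say) is $\prob\big(\{|\tilde\t_j|>\hat\tau\}\cap\calA\big)\le Cs^2/p$ for $j\notin S$; a union bound over the $p-s$ null coordinates then gives a failure probability of order $s^2$, which is useless, and in the regime $s\asymp\sqrt p$ the threshold is of the same order as the noise level $\s/\sqrt n$, so a constant fraction of the null coordinates exceed it with probability close to one. Your event $\mathcal E$ therefore has probability near zero there, and your error decomposition omits the term $\sum_{j\in S^\complement\cap\hat S}a_j(\hat\bt_{\sf srs})$, which is genuinely present. Inflating the threshold to $\asymp\s\sqrt{\log(p)/n}$ to force exact null-support exclusion would degrade the bias term to $\s^2 s\log(p)/n$, losing the target rate $\s^2 s\log(1+\sqrt p/s)/n$ near $s=\sqrt p$.

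The paper's resolution is to control this off-support contribution \emph{in mean square} rather than to kill it on an event: conditionally on $(\XX_1,\bY_1,\XX_3,\bY_3)$ the sum $\sum_{j\in S^\complement}a_j(\hat\bt_{\sf srs})\fcar_{|\tilde\t_j|>\hat\tau}$ is a degenerate U-statistic over sub-sample~$2$, whose conditional second moment is bounded by $\tfrac{C\s^4}{n^2}\sum_{j\in S^\complement}\fcar_{|\tilde\t_j|>\hat\tau}$ on $\calA$; taking expectations and using the per-coordinate bound $Cs^2/p$ gives $\tfrac{C\s^4}{n^2}\cdot p\cdot\tfrac{s^2}{p}=C\s^4 s^2/n^2$, exactly the target. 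In other words, the threshold is calibrated so that the \emph{expected number} of false inclusions is $O(s^2)$, each costing $O(\s^4/n^2)$, not so that there are none. Your treatment of the missed on-support indices and of the retained on-support variance is consistent with the paper's $S_1,S_2,S_3$ analysis, so the fix is localized to this one step.
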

The next corollary follows immediately from Theorem~\ref{theo:pgen:sparse12} and the remark after Proposition~\ref{proposition_derumigny}. 
\begin{corollary}\label{cor:HDsparse}
Let the assumptions of Theorem~\ref{theo:pgen:sparse12} hold. 
Let $p\ge \g n$ for some $\g>0$ and $\d\in (0,1)$. Then,
there exist $C_\d>0$ 
and an integer $N_0$, both depending only on $L,M,\k_0, \d,\g$, such that for $N\ge N_0$ the 
estimators  $\hat \L=\hat{\L}_S^{HD}$ and $\hat Q=\hat Q_S^{HD}$ satisfy \eqref{eq:est}  and \eqref{eq:est2}, respectively, with
 \begin{align}
	&
\phi_*(p,N,s,\s,\d) \leq C_\d\s\sqrt{\frac{s\log(1+\sqrt{p}/s)}{N}},\\
 &q_{\bt}(p,N,\s,\d) \leq C_\d\Big(\s^2\frac{s\log(1+\sqrt{p}/s)}{N}+\s\frac{\|\bt\|_2}{\sqrt{N}}\Big) .
 \end{align}
 Furthermore, for all $s\le \sqrt{p}$, $\bt\in B_0(s)$, $\kappa > 0$ and $N\ge N_0$,
\begin{align}
q_{\bt}^*(p,N,\s,\d) &\leq  \min\Big(C_\d\Big(\s^2\frac{s\log(1+\sqrt{p}/s)}{N}+\s\frac{\|\bt\|_2}{\sqrt{N}}\Big), \|\bt\|_2^2\Big), \\
q_{s,\kappa}^*(p,N,\s,\d) &\leq  \min\Big(C_\d \Big(\s^2\frac{s\log(1+\sqrt{p}/s)}{N}+\s\frac{\kappa}{\sqrt{N}}\Big), \k^2\Big).
\end{align}
 \end{corollary}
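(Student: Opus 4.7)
The plan is to derive Corollary \ref{cor:HDsparse} directly from the two high-probability bounds in Theorem~\ref{theo:pgen:sparse12}, together with the standard observation that the trivial zero estimator gives the bound $\|\bt\|_2^2$. First I would rewrite everything in terms of $N=3n$, which only changes multiplicative constants that can be absorbed into $C_\delta$. The two high-probability events in Theorem~\ref{theo:pgen:sparse12} have probability at least $1-C'e^{-(n\wedge s\log(ep/s))/C'}$, and on these events one obtains, respectively,
\begin{equation*}
\big(\hat Q_S^{HD}-Q(\bt)\big)^2\le C\Big(\s^4\tfrac{s^2\log^2(1+\sqrt p/s)}{n^2}+\s^2\tfrac{\|\bt\|_2^2}{n}\Big)\quad\text{and}\quad \big(\hat\L_S^{HD}-\|\bt\|_2\big)^2\le C\s^2\tfrac{s\log(1+\sqrt p/s)}{n}.
\end{equation*}

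Next I would control the exceptional probability. Under the hypothesis $p\ge\gamma n$ and Condition~\ref{6} we have $s\log(ep/s)\ge \log(ep)\ge\log(e\gamma n)$, as already observed just after Proposition~\ref{proposition_derumigny}. Therefore $n\wedge s\log(ep/s)\ge \log(e\gamma n)$, so the exceptional probability is bounded by $C'(e\gamma n)^{-1/C'}$, which is a quantity depending only on $L,M,\k_0,\gamma$ and $n$, and which tends to $0$ as $n\to\infty$. Given $\delta\in(0,1)$, I pick the integer $N_0=N_0(L,M,\k_0,\delta,\gamma)$ so that $C'(e\gamma N_0/3)^{-1/C'}\le\delta$, and then for all $N\ge N_0$ the tail bound of Theorem~\ref{theo:pgen:sparse12} yields, after substituting $n=N/3$ and taking square roots,
\begin{equation*}
|\hat\L_S^{HD}-\|\bt\|_2|\le C_\delta\s\sqrt{\tfrac{s\log(1+\sqrt p/s)}{N}}\ \text{and}\ |\hat Q_S^{HD}-Q(\bt)|\le C_\delta\Big(\s^2\tfrac{s\log(1+\sqrt p/s)}{N}+\s\tfrac{\|\bt\|_2}{\sqrt N}\Big)
\end{equation*}
with probability at least $1-\delta$, uniformly over $\bt\in B_0(s)$ and $\s>0$. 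This gives the first two claimed inequalities, i.e. the bounds on $\phi_*(p,N,s,\s,\delta)$ and $q_{\bt}(p,N,\s,\delta)$.

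For the last two inequalities, I use that $q_{\bt}^*$ is defined as the infimum over estimators of the achievable scaling, so it suffices to exhibit \emph{some} estimator attaining each side. The estimator $\hat Q=\hat Q_S^{HD}$ gives the first term, and the trivial estimator $\hat Q\equiv 0$ satisfies $|0-Q(\bt)|=\|\bt\|_2^2$ deterministically, hence \eqref{eq:est2} with $q_{\bt}=\|\bt\|_2^2$. Taking the better of the two estimators (the choice depends only on $\|\bt\|_0$ and the known quantities $p,N,\s$, not on $\|\bt\|_2$ itself; since the bound is monotone in $\|\bt\|_2$ we simply take the minimum) yields $q_{\bt}^*\le\min\bigl(C_\delta(\s^2 s\log(1+\sqrt p/s)/N+\s\|\bt\|_2/\sqrt N),\|\bt\|_2^2\bigr)$. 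The bound on $q_{s,\kappa}^*$ follows by taking the maximum of the $\bt$-pointwise bound over $\{\|\bt\|_0\le s,\|\bt\|_2\le\kappa\}$, since the right-hand side is monotone in $\|\bt\|_2$. There is no real obstacle here; the only point to watch is that $N_0$ is chosen independent of $p,s,\s$ (only depending on $L,M,\k_0,\delta,\gamma$), which is automatic because the exceptional probability bound $C'(e\gamma n)^{-1/C'}$ has the required dependence.
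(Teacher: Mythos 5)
Your proposal is correct and follows exactly the route the paper intends: the paper's ``proof'' is the single sentence that the corollary follows from Theorem~\ref{theo:pgen:sparse12} together with the remark after Proposition~\ref{proposition_derumigny} (namely that $p\ge\g n$ forces $n\wedge s\log(ep/s)\ge\log(e\g n)$, so the exceptional probability is $O((\g n)^{-1/C'})$ and can be made $\le\d$ by choosing $N_0$), plus the zero-estimator observation already used for Corollary~\ref{cor:LDdense}. You have simply filled in those same steps, so there is nothing to add.
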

It follows from Corollary~\ref{cor:HDsparse} and Theorems~\ref{th:lower1} and~\ref{th:lower2} below that, under the assumptions of Theorem~\ref{theo:pgen:sparse12}, the above bounds are minimax optimal when $s\le \sqrt{p}$ in the sense defined in the Introduction.

\subsection{Signal detection}
  
Define the testing procedure
\begin{align}
	\Delta^{HD} &= \fcar \Big\{\hat{\L}^{HD} \geq \b\hat \s_{\sf srs}\s\sqrt{ \frac{s\log(1+\sqrt{p}/s)}{N}}\Big\},
\end{align}
where $\b>0$ is a constant,  $\hat{\L}^{HD}=\hat{\L}_{S}^{HD}$ if $s \leq \sqrt{p}$, and $\hat{\L}^{HD}=\hat{\L}_{D}^{HD}$ if $s > \sqrt{p}$. Theorems~\ref{theo:pgen:dense},~\ref{theo:pgen:sparse12} and Proposition~\ref{prop_sigma_sqs} imply the following corollary.

\begin{corollary}\label{cor:test:highdim}
		Let  Conditions~\ref{1}, \ref{5} and (P) hold and $\d\in (0,1)$. Let $p\ge \g n$ for some $\g>0$.
	Then there exist positive constants $\a,\b,C_\d, N_0$ depending only on $\d,L,M,\k_0, \g$ such that, for 
	any $N\geq N_0$ and any
	\begin{equation}
	\rho \geq C_\d\sqrt{\frac{s \log(1+\sqrt{p}/s)}{N}},
	\end{equation}
	we have
	\begin{equation}
	R(\D^{HD}, p,N,s, \rho) \leq \d,
	\end{equation}		
	where $R(\cdot, p,N,s, \rho) $ is defined in~\eqref{eq:test}.
\end{corollary}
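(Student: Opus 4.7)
The plan is to reduce the testing problem to the control of the estimator $\hat{\Lambda}^{HD}$ and of the preliminary noise-level estimator $\hat{\sigma}_{\sf srs}$. The argument mirrors the one sketched for Corollary \ref{cor:test:lowdim}, but now based on the high-dimensional building blocks: Theorem \ref{theo:pgen:dense} (for $s>\sqrt{p}$), Theorem \ref{theo:pgen:sparse12} (for $s\le\sqrt{p}$), and the concentration inequality \eqref{sigmalasso} that follows from Proposition \ref{prop_sigma_sqs}. Since Condition (P) is assumed, the exponent $\exp(-(n\wedge s\log(ep/s))/C')$ in those statements is bounded by $C(\gamma n)^{-1/C}$ via the remark following Proposition \ref{proposition_derumigny}, so all ``bad'' events can be absorbed into $\delta/4$ by taking $N$ larger than some $N_0=N_0(\delta,L,M,\kappa_0,\gamma)$.

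First I would handle the Type I error. Under $\bt=\mathbf 0$, the relevant theorem (Theorem \ref{theo:pgen:dense} or Theorem \ref{theo:pgen:sparse12}) gives, with probability at least $1-\delta/4$, the bound
\begin{equation*}
\hat{\Lambda}^{HD}\le C_1\,\sigma\sqrt{\frac{s\log(1+\sqrt{p}/s)}{n}},
\end{equation*}
because $\|\bt\|_2=0$ and the square-root of the error bound on $\hat Q$ dominates. On the same event (intersected with the event from \eqref{sigmalasso}, which also has probability at least $1-\delta/4$ for $N\ge N_0$), one has $\hat{\sigma}_{\sf srs}\ge \sigma/\sqrt 2$, so the threshold satisfies $\beta\hat\sigma_{\sf srs}\sqrt{s\log(1+\sqrt p/s)/N}\ge (\beta/2)\sigma\sqrt{s\log(1+\sqrt p/s)/N}$. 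Choosing $\beta$ large enough (depending only on the previous constants) ensures the threshold strictly exceeds $\hat\Lambda^{HD}$, giving $\prob_{\mathbf 0,\sigma}(\Delta^{HD}=1)\le\delta/2$ uniformly in $\sigma>0$.

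Next I would handle the Type II error. On the event $\{|\hat\Lambda^{HD}-\|\bt\|_2|\le C_1\sigma\sqrt{s\log(1+\sqrt p/s)/n}\}\cap\{\hat\sigma_{\sf srs}\le \sqrt{3/2}\,\sigma\}$, which has probability at least $1-\delta/2$ uniformly over $\bt\in B_0(s),\sigma>0$ for $N\ge N_0$, the reverse triangle inequality gives
\begin{equation*}
\hat\Lambda^{HD}\ge \|\bt\|_2-C_1\sigma\sqrt{\frac{s\log(1+\sqrt p/s)}{n}}\ge \sigma\rho-C_1\sigma\sqrt{\frac{s\log(1+\sqrt p/s)}{n}}.
\end{equation*}
Taking $C_\delta$ large enough so that $C_\delta\sqrt{s\log(1+\sqrt p/s)/N}\ge (2C_1+\beta\sqrt{3/2})\sqrt{s\log(1+\sqrt p/s)/n}$ (note $N=2n$ or $3n$, so this is just a numerical comparison), the right-hand side exceeds $\beta\hat\sigma_{\sf srs}\sqrt{s\log(1+\sqrt p/s)/N}$, so the test rejects; hence the Type II error is at most $\delta/2$.

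No step is conceptually hard: the whole argument is a routine deterministic chain on a high-probability event, and every probabilistic ingredient is supplied. The main bookkeeping obstacle is simply to choose the constants in the correct order ($\alpha$ fixed by Theorem \ref{theo:pgen:sparse12}, then $\beta$ large relative to $C_1$, then $C_\delta$ large relative to $\beta$, then $N_0$ large so that all rare-event probabilities sum to at most $\delta$) and to check that the factor $\sqrt{N/n}\in\{\sqrt 2,\sqrt 3\}$ does not cause any issue when passing between statements written with $n$ and the final bound written with $N$.
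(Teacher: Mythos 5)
Your proposal is correct and follows exactly the route the paper intends: the paper omits the proof as "straightforward," pointing to the argument leading to Theorem 3 in \cite{carpentier2018minimax}, which is precisely the split into Type I/Type II errors on the high-probability event where $\hat\Lambda^{HD}$ concentrates (Theorems~\ref{theo:pgen:dense} and~\ref{theo:pgen:sparse12}) and $\hat\sigma_{\sf srs}$ is within a constant factor of $\sigma$ (via \eqref{sigmalasso}), with the constants chosen in the order you describe. Your reading of the threshold as $\beta\hat\sigma_{\sf srs}\sqrt{s\log(1+\sqrt p/s)/N}$ (dropping the spurious extra factor of $\sigma$ in the displayed definition of $\Delta^{HD}$) is the intended one, and your handling of the dense case via the equivalence $\sqrt{s\log(1+\sqrt p/s)}\asymp p^{1/4}$ for $s>\sqrt p$ is sound.
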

The proof of this corollary is straightforward (see, for example, the argument leading to Theorem 3 in \cite{carpentier2018minimax}). 
It follows that, for  $C_\d'>0, N_0>0$ depending only on $\d,L,M,\k_0, \g$ and any $N\geq N_0$,
\begin{equation}
\rho_*(p,N,s,\d) \leq C_\d'\sqrt{\frac{s \log(1+\sqrt{p}/s)}{N}}.
\end{equation}
Moreover, Corollary~\ref{cor:test:highdim} and Theorem~\ref{th:lower1} below imply that, under the assumptions of Corollary~\ref{cor:test:highdim}, the test $\D^{HD}$ is minimax optimal in the sense defined in the Introduction.

\section{Lower bounds}\label{sec:lower}

\begin{theorem}\label{th:lower1}
Assume that $\mathbb X$ is a matrix with i.i.d.~standard normal entries and $\bxi$ is an i.i.d.~standard normal noise. 
For any $\d\in (0,1)$, there exists $c_\d>0$ depending only on $\d$ such that 
\begin{equation}\label{th:lower1_1}
\inf_{ \Delta} R(\Delta,p,N,s,\rho)\ge \d,
\end{equation}
for 
\begin{equation}\label{th:lower1_2}
\rho \le \rho(p,N,s,\d): = c_\d\min\left( \sqrt{\frac{s\log(1+\sqrt{p}/s)}{N}},1\right),
\end{equation}
where $R(\bar\Delta,p,N,s,\rho)$ is defined in \eqref{eq:test}, and $\inf_{ \Delta}$ denotes the infimum over all tests $\Delta$. As a consequence, for all $\d\in (0,1)$ we have 
\begin{equation}\label{th:lower1_3}
\rho^*(p,N,s, \delta) \ge c_\d\min\left( \sqrt{\frac{s\log(1+\sqrt{p}/s)}{N}},1\right),
\end{equation}
and 
\begin{equation}\label{th:lower1_4}
\phi^*(p,N,s,\s, \delta) \ge \frac{c_\d}{2}\s\min\left( \sqrt{\frac{s\log(1+\sqrt{p}/s)}{N}},1\right).
\end{equation}
\end{theorem}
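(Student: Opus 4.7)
The plan is to use the classical Bayesian reduction for minimax testing lower bounds, combined with a two-regime analysis matching the two terms inside the $\min$. The generic inequality
\[
\inf_{\Delta} R(\Delta, p, N, s, \rho) \ge 1 - \tfrac{1}{2}\sqrt{\chi^2(P_1, P_0)}
\]
holds for any probability distributions $P_0, P_1$ supported respectively on the null parameter set $\{(\mathbf 0, \s_0) : \s_0 > 0\}$ and on the alternative set $\{(\bt, \s_1) : \bt \in B_0(s),\ \|\bt\|_2 \ge \s_1\rho\}$, reducing each bound to a chi-square estimate. The bound \eqref{th:lower1_4} on $\phi^*$ then follows from \eqref{th:lower1_3} by the standard estimation-to-testing reduction: an estimator $\hat{\L}$ satisfying \eqref{eq:est} with $\phi_* < \tfrac{1}{2}\s \rho_*$ would, via thresholding at $\phi_*$, produce a test contradicting \eqref{th:lower1_3}.

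\textbf{First term, $\sqrt{s\log(1+\sqrt{p}/s)/N}$.} This dominates when $s\log(1+\sqrt{p}/s)/N \le 1$ and follows from a reduction to the known-$\s$ case: fixing $\s_0 = \s_1 = 1$ in both hypotheses yields a sub-problem of the unknown-$\s$ testing task, and the lower bound $\rho_* \ge c_\d \sqrt{s\log(1+\sqrt{p}/s)/N}$ in that sub-problem, for Gaussian design and Gaussian noise, is the content of \cite{carpentier2018minimax} (cf.\ \eqref{carpentier} and \eqref{low-dim-rate}). Its proof uses a Bernoulli--Rademacher prior $\t_j = \pm \rho/\sqrt{s}$ supported on a uniformly random subset of $\{1,\dots,p\}$ of size $s$, together with an Ingster--Suslina chi-square calculation exploiting independence across the $N$ rows of the Gaussian design.

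\textbf{Constant cap $c_\d$.} This part dominates when $s\log(1+\sqrt{p}/s)/N \ge 1$ and exploits the unknown noise level. Take $\s_1 = 1$ under the alternative and $\s_0 = \sqrt{1+\rho^2}$ under the null, so that each row $Z_i = (X_{i1},\dots,X_{ip}, Y_i)$ is a $(p+1)$-dimensional zero-mean Gaussian with matching $Y_i$-marginal variance $\s_0^2$ under both hypotheses; only the cross-covariance block differs,
\[
\S_0 = \begin{pmatrix}\mathbb I_p & 0 \\ 0 & \s_0^2\end{pmatrix}, \qquad \S_{\bt} = \begin{pmatrix}\mathbb I_p & \bt \\ \bt^T & \s_0^2\end{pmatrix}.
\]
Use the same Bernoulli--Rademacher prior on $\bt$ (with $\|\bt\|_2 = \rho$). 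Row independence and the Ingster--Suslina identity give
\[
1 + \chi^2(P_1, P_0) = \esp_{\bt, \bt'}\!\bigl[A(\bt, \bt')^N\bigr], \qquad A(\bt, \bt') = \int \frac{p_\bt(z)\, p_{\bt'}(z)}{p_0(z)}\,dz,
\]
for the per-row densities $p_\bt, p_0$. A Schur-complement determinant computation, using $\det\S_\bt = \det\S_{\bt'} = 1$ by the choice $\s_0^2 = 1+\rho^2$, produces $A(\bt, \bt') = F(\rho, \langle \bt, \bt'\rangle)$ in closed form. Averaging over the independent Rademacher signs kills cross-coordinate contributions and reduces control of $\esp[A(\bt,\bt')^N]$ to a moment generating function bound for the size of the overlap $|S \cap S'|$ of two independent random $s$-subsets, which is uniformly bounded in $p, s, N$ provided $\rho$ is below a sufficiently small universal constant $c_\d$.

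\textbf{Main obstacle.} The delicate step is the chi-square computation for the unknown-$\s$ cap: the Schur-complement identities must produce an exponent depending only on $\langle\bt,\bt'\rangle^2$ and $\rho$, and the combinatorial MGF estimates for the random-support overlap must be sharp enough to yield $\chi^2 = O(1)$ independently of $p, s, N$ under the constraint $\rho \le c_\d$. The first-term lower bound, by contrast, is essentially a direct citation, since the Ingster--Suslina argument in \cite{carpentier2018minimax} already operates in the Gaussian regression setting we consider here.
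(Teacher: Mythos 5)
Your overall architecture (mixture-prior reduction, chi-square second-moment bound, and the estimation-to-testing reduction for \eqref{th:lower1_4}) matches the paper's, and your variance-matching construction for the constant cap is essentially the one the paper uses. However, there is a genuine gap in your treatment of the first term of the min. You propose to obtain the bound $\rho\le c_\d\sqrt{s\log(1+\sqrt{p}/s)/N}$ in the regime $s\log(1+\sqrt{p}/s)\le N$ by fixing $\s_0=\s_1=1$ and citing the known-$\s$ lower bound of \cite{carpentier2018minimax}. That lower bound is $c\min\big(\sqrt{s\log(1+\sqrt{p}/s)/N},\,p^{1/4}/\sqrt{N},\,N^{-1/4}\big)$, and the cap $N^{-1/4}$ is unavoidable when $\s$ is known: the statistic $\|\bY\|_2^2/N-\s^2$ detects any $\bt$ with $\|\bt\|_2/\s\gtrsim N^{-1/4}$, so no known-$\s$ construction can certify a separation rate above $N^{-1/4}$. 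Consequently, whenever $\sqrt{N}<s\log(1+\sqrt{p}/s)\le N$, your citation only covers $\rho\lesssim N^{-1/4}$, which is strictly smaller than the claimed $\sqrt{s\log(1+\sqrt{p}/s)/N}$, and your argument proves nothing in that range. This is not a corner case: it is exactly the regime where the unknown-variance rate genuinely exceeds the known-variance rate, which is the main content of the theorem.

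The fix, and what the paper does, is to run the variance-matched construction over the \emph{entire} range $\rho\le c_\d\min\big(\sqrt{s\log(1+\sqrt{p}/s)/N},1\big)$: after reducing to $s\le\sqrt{p}$, take $\bt$ uniform over $s$-sparse vectors with all nonzero entries equal to $\tau/\sqrt{s}$, where $\tau^2=\rho^2/(1+\rho^2)$, with noise level $\sqrt{1-\tau^2}$ under the alternative against $(\mathbf 0,1)$ under the null. The per-pair cross term then evaluates exactly to $(1-\langle\bt,\bt'\rangle)^{-N}$ (Lemmas \ref{lem3} and \ref{lem3a}); note the exponent is \emph{linear} in $\langle\bt,\bt'\rangle$, not quadratic as your ``main obstacle'' paragraph anticipates (with Rademacher signs the $\cosh$ arising from sign-averaging should simply be bounded by $\exp$, not expanded to second order). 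Bounding the cross term by $e^{2N\langle\bt,\bt'\rangle}$ and applying the hypergeometric MGF bound to the support overlap $|S\cap S'|$ gives $\chi^2=O(1)$ precisely under $N\tau^2\lesssim s\log(1+p/s^2)$, which delivers both terms of the min in a single stroke, with no appeal to the known-$\s$ result.
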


Note that Theorem~\ref{th:lower1} can be obtained by combining \cite[Lemma 17]{carpentier-verzelen2019a} and the proof of \cite[Proposition 6]{carpentier-verzelen2019a} if we take into account that \cite[Proposition 6]{carpentier-verzelen2019a} is stated with the maximum over the covariance matrices of the design while its proof uses only the identity matrix. For completeness, we provide a direct proof of Theorem~\ref{th:lower1}
in Section \ref{sec:proof:lower1} below. 

\begin{theorem}\label{th:lower2}
Assume that $\mathbb X$ is a matrix with i.i.d.~standard normal entries and $\bxi$ is and i.i.d.~standard normal noise. 
Then for any $\d\in (0,1)$, there exists $c_\d>0$ that depends only on $\delta$ such that 
$$
\inf_{\hat T} \sup_{\substack{\bt\in B_0(s): \\ \|\bt\|_2\le\kappa}} \ \sup_{\s>0} \prob_{\bt,\s}\Big(|\hat T - Q(\bt)|\ge c_\d \bar q_{s,\kappa}(p,N,  \s)\Big)\ge \d,
$$ 
where $\inf_{\hat T}$ denotes the infimum over all estimators, and
$$
\bar  q_{s,\kappa}(p,N,  \s) =  \min\Big(   \s^2 \min \Big(\frac{ s\log(1+\sqrt{p}/s)}{N},1\Big) + \frac{\s \kappa}{\sqrt{N}}, \ \kappa^2\Big).
$$
\end{theorem}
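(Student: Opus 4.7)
The plan is to rewrite the target bound as $\bar q_{s,\kappa} = \min(A + B,\kappa^2)$ with $A := \s^2 \min(r,1)$, $r := s\log(1+\sqrt p/s)/N$, and $B := \s\kappa/\sqrt N$, and to exploit the elementary inequality
$$\min(A+B,\kappa^2) \le 2\max\bigl(\min(A,\kappa^2),\,\min(B,\kappa^2)\bigr).$$
So it is enough to produce two separate lower bounds of orders $\min(A,\kappa^2)$ and $\min(B,\kappa^2)$ by two distinct constructions, then combine them by taking the maximum, which dominates $\bar q_{s,\kappa}/2$. Throughout I fix an arbitrary $\s > 0$: the feasible set $B_0(s)\cap\{\|\cdot\|_2\le\kappa\}$ is independent of $\s$ and the lower bound is a supremum over $\s$, so any single choice works.

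For the piece of order $\min(A,\kappa^2)$ I will invoke Theorem~\ref{th:lower1} with the clipped separation radius $\rho_1 := \min\bigl(c_\d\min(\sqrt r,1),\, \kappa/\s\bigr)$. Since $\rho_1 \le \rho(p,N,s,\d)$, the testing problem $H_0\!:\bt=0$ versus $H_1\!:\bt\in B_0(s),\,\|\bt\|_2\ge\s\rho_1$ has minimax risk at least $\d$; and since $\s\rho_1\le\kappa$, the alternatives built in Theorem~\ref{th:lower1} (which have $\|\bt\|_2\asymp\s\rho_1$) automatically lie in the ball $\{\|\bt\|_2\le\kappa\}$, so the testing lower bound remains valid on the restricted alternative. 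Given any estimator $\hat T$ of $Q(\bt)$, the plug-in test $\Delta:=\fcar\{\hat T>\s^2\rho_1^2/2\}$ decides correctly on $\bt=0$ (where $Q=0$) and on any $\bt$ with $\|\bt\|_2\ge\s\rho_1$ (where $Q\ge\s^2\rho_1^2$) whenever $|\hat T-Q(\bt)|<\s^2\rho_1^2/2$; the testing lower bound thus forces
$$\sup_{\bt}\prob_{\bt,\s}\bigl(|\hat T-Q(\bt)|\ge\s^2\rho_1^2/4\bigr)\ge\d/2,$$
and $\s^2\rho_1^2/4\asymp\min(A,\kappa^2)$.

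For the piece of order $\min(B,\kappa^2)$ I will apply Le Cam's two-point method. Under i.i.d.\ standard Gaussian $(\XX,\bxi)$ the marginal of $\XX$ is parameter-free, so for any pair of parameters
$$\operatorname{KL}\bigl(\prob_{\bt^{(0)},\s}\,\|\,\prob_{\bt^{(1)},\s}\bigr)=\esp_\XX\Bigl[\|\XX(\bt^{(0)}-\bt^{(1)})\|_2^2/(2\s^2)\Bigr]=\frac{N\|\bt^{(0)}-\bt^{(1)}\|_2^2}{2\s^2}.$$
I split on whether $\kappa\ge\s/\sqrt N$. If yes, take $\bt^{(0)}=\kappa e_1$ and $\bt^{(1)}=(\kappa-h)e_1$ with $h=c\s/\sqrt N$ for a small constant $c$: both are $1$-sparse with norm $\le\kappa$, the KL is at most $c^2/2$, and $|Q(\bt^{(0)})-Q(\bt^{(1)})|=2\kappa h-h^2\asymp\s\kappa/\sqrt N$. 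If no, take $\bt^{(0)}=0$ and $\bt^{(1)}=\kappa e_1$: the KL equals $N\kappa^2/(2\s^2)\le 1/2$ by assumption, and $|Q(\bt^{(0)})-Q(\bt^{(1)})|=\kappa^2=\min(B,\kappa^2)$. The standard Le Cam two-point inequality then converts the constant KL bound into a $\d$-level estimation lower bound of order $\min(B,\kappa^2)$.

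I expect the main subtlety to be the clipping $\rho_1=\min(\rho(p,N,s,\d),\kappa/\s)$ in the first step: it is precisely this truncation that yields the interaction between the sparse-detection rate $A$ and the cap $\kappa^2$ in the target bound, and it requires inspecting the construction behind Theorem~\ref{th:lower1} to confirm that the alternatives can be taken with $\|\bt\|_2\asymp\s\rho_1$ rather than merely $\|\bt\|_2\ge\s\rho_1$. The KL computation in the second step is immediate thanks to the shared marginal of $\XX$ across hypotheses, and the only real choice is the case split around $\kappa\sim\s/\sqrt N$, which is exactly what distinguishes the two regimes $B$ and $\kappa^2$ in the target.
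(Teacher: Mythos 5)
Your strategy is essentially the paper's own, reorganized: since $\min(\max(A,B),\kappa^2)=\max(\min(A,\kappa^2),\min(B,\kappa^2))$, your decomposition coincides with the one the paper extracts from Lemma~\ref{lem:q}; your two-point constructions $\kappa e_1$ versus $(\kappa-h)e_1$ and $0$ versus $\kappa e_1$ (with the correct KL computation through the parameter-free marginal of $\XX$) match the paper's cases $\kappa^2\ge\e^2z^2$ and $\kappa^2<\e^2 z$; and your plug-in-test reduction to Theorem~\ref{th:lower1} with the radius clipped at $\kappa/\s$ is the paper's middle case together with the sub-case $\kappa<\tau$, where the paper reruns Lemma~\ref{lem3aa} with $\tau$ replaced by $\kappa$.

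The one claim that does not survive as written is ``I fix an arbitrary $\s>0$\dots{} so any single choice works.'' For the $\min(A,\kappa^2)$ piece this is false: with a single common noise level under null and alternative you are in the known-variance detection problem, whose squared separation is capped at order $N^{-1/2}$, so you cannot reach $\s^2\min(r,1)$ with $r=s\log(1+\sqrt{p}/s)/N$ once $s\log(1+\sqrt{p}/s)\gg\sqrt{N}$. The construction behind Theorem~\ref{th:lower1} necessarily uses two different noise levels, $\s_0=1$ under the null and $\s_1=\sqrt{1-\tau^2}$ under the alternative; consequently your displayed conclusion must be a supremum over $(\bt,\s)$ jointly, and you must verify that the achieved error level $\asymp\tau^2$ dominates $c\,\bar q_{s,\kappa}(p,N,\s_j)$ for both values $\s_j$ actually used --- exactly the bookkeeping $z(1)\le z(s)$, $\e^2(\bar\s)\le\e^2(1)$ and \eqref{eqQ} in the paper's proof. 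Relatedly, the plug-in test with a fixed threshold does not control the type~II error uniformly over the inner $\sup_\s$ in \eqref{eq:test}, because the separation $\s\rho$ scales with $\s$; the reduction must therefore be applied to the explicit prior construction of Lemma~\ref{lem3aa} rather than to the statement of Theorem~\ref{th:lower1}. Both points are repairable along the paper's lines, and you do flag the need to inspect the construction, but the single-$\s$ framing is a genuine gap in the sketch as it stands.
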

This theorem immediately implies the following corollary.
\begin{corollary}
Under the assumptions of Theorem~\ref{th:lower2} we have
$$
q_{\bt}^*(p,N, \s,\delta) \geq  c_\d\min\Big(   \s^2 \min \Big(\frac{ \|\bt\|_0\log(1+\sqrt{p}/\|\bt\|_0)}{N},1\Big) + \frac{\s \|\bt\|_2}{\sqrt{N}}, \ \|\bt\|_2^2\Big),
$$
and
$$
q_{s,\kappa}^*(p,N,  \s,\delta) \geq c_\d  \min\Big(   \s^2 \min \Big(\frac{ s\log(1+\sqrt{p}/s)}{N},1\Big) + \frac{\s \kappa}{\sqrt{N}}, \ \kappa^2\Big).
$$
\end{corollary}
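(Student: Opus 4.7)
The plan is to decompose the target rate into two pieces and prove a lower bound for each via a reduction to testing. Writing $A:=\s^2\min(s\log(1+\sqrt{p}/s)/N,1)$ and $B:=\s\kappa/\sqrt{N}$, an elementary case analysis shows $\min(A,\kappa^2)+\min(B,\kappa^2)\ge \min(A+B,\kappa^2)=\bar q_{s,\kappa}(p,N,\s)$, so up to a constant factor it suffices to establish the two lower bounds
\[
q^*_{s,\kappa}\gtrsim \min(A,\kappa^2)\quad\text{and}\quad q^*_{s,\kappa}\gtrsim \min(B,\kappa^2).
\]
Both will rely on the standard reduction from estimation to testing: if $\hat T$ satisfies $\sup\prob_{\bt,\s}(|\hat T-Q(\bt)|\ge R/2)\le \d$ uniformly over the class, then thresholding $\hat T$ at height $R/2$ produces a test of total error at most $2\d$ between any two sub-hypotheses whose $Q$-values differ by at least $R$; so any two-point or mixture-type testing lower bound featuring a $Q$-gap of order $R$ translates into $q^*_{s,\kappa}\gtrsim R$.

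For the first inequality I reuse the prior $\pi$ that underlies Theorem~\ref{th:lower1}. That prior is supported on $B_0(s)$, its atoms all have common Euclidean norm $r\asymp \s\min(\sqrt{s\log(1+\sqrt{p}/s)/N},1)=\sqrt{A}$, and the mixture $\int \prob_{\bt,\s}\,d\pi(\bt)$ has $\chi^2$-divergence $O(1)$ from $\prob_{\mathbf 0,\s}$. If $\sqrt{A}\le \kappa$, the prior already lies in $\{\|\bt\|_2\le \kappa\}$ and the $Q$-gap is of order $A$. If $\sqrt{A}>\kappa$, I rescale every atom of $\pi$ by the factor $\kappa/\sqrt{A}<1$; under the Gaussian random design the $\chi^2$-divergence of the rescaled mixture against $\prob_{\mathbf 0,\s}$ is a monotone function of the common atom norm, so shrinking the norm cannot make it larger, and the gap in $Q$ is now of order $\kappa^2$. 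Le Cam then yields $q^*_{s,\kappa}\gtrsim \min(A,\kappa^2)$.

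For the second inequality I run a two-point Le Cam argument. Fix $\s>0$, let $c$ be a small absolute constant, and set $\eta:=c\min(\s/\sqrt{N},\kappa)$, $\bt_0:=\kappa\mathbf e_1$ and $\bt_1:=(\kappa-\eta)\mathbf e_1$, where $\mathbf e_1$ is the first canonical basis vector. Both parameters are $1$-sparse and of $\ell_2$-norm at most $\kappa$, hence lie in $B_0(s)\cap\{\|\bt\|_2\le \kappa\}$. Conditioning on $\XX$ and averaging, the KL divergence of the joint laws of $(\XX,\bY)$ equals
\[
D(\prob_{\bt_0,\s}\|\prob_{\bt_1,\s})=\esp_{\XX}\frac{\|\XX(\bt_0-\bt_1)\|_2^2}{2\s^2}=\frac{N\eta^2}{2\s^2}\le \frac{c^2}{2},
\]
which is at most $1/8$ for $c$ small enough. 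Meanwhile $|Q(\bt_0)-Q(\bt_1)|=2\kappa\eta-\eta^2\ge \kappa\eta\asymp \min(B,\kappa^2)$, and Le Cam's two-point lemma concludes $q^*_{s,\kappa}\gtrsim \min(B,\kappa^2)$.

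The main technical obstacle is the monotonicity step in the first bound. If the prior from Theorem~\ref{th:lower1} is not a pure scaling (for instance, if it couples a scale parameter with a Bernoulli-type support-selection measure), then multiplying each atom by $\kappa/\sqrt{A}$ need not transparently preserve the $\chi^2$ bound. The clean remedy is to redo the $\chi^2$ computation directly with the common atom norm fixed at $\min(\sqrt{A},\kappa)$: in the standard Gaussian-design expansion the key term is an expectation of $\exp(cN\langle\bu,\bu'\rangle^2/\s^2)$ over two independent draws from the support-selection measure, which is monotone increasing in the squared atom norm, so the argument for Theorem~\ref{th:lower1} transfers without new ideas to the smaller signal strength.
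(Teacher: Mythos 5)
Your overall architecture is sound and, modulo reorganization, coincides with the paper's: the paper deduces this corollary in one line from Theorem~\ref{th:lower2}, whose proof combines (i) a two-point perturbation $\kappa\mathbf{e}_1$ versus $(\kappa-\eta)\mathbf{e}_1$ for the $\s\kappa/\sqrt{N}$ term --- your second bound is essentially identical, including the choice $\eta\asymp\min(\s/\sqrt{N},\kappa)$ --- with (ii) the sparse prior of Theorem~\ref{th:lower1}, shrunk to atom norm $\min(\tau,\kappa)$ when $\kappa$ is small, which is your first bound. Your additive decomposition $\min(A,\kappa^2)+\min(B,\kappa^2)\ge\min(A+B,\kappa^2)$ replaces the paper's three-case analysis via Lemma~\ref{lem:q}; both are valid and equivalent up to constants.

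The one genuine gap is in the $\chi^2$ step of your first bound. You assert that the mixture $\int \prob_{\bt,\s}\,d\pi(\bt)$ is at $\chi^2$-distance $O(1)$ from $\prob_{\mathbf 0,\s}$ \emph{at the same noise level} $\s$. That fails exactly in the regime where it matters: with a common $\s$ the Gaussian-design computation gives $\int d\prob_{\bt,\s}d\prob_{\bt',\s}/d\prob_{\mathbf 0,\s}=\big(\esp\exp((\bX^1\bt)(\bX^1\bt')/\s^2)\big)^N$, and even for two draws with disjoint supports (so $\langle\bt,\bt'\rangle=0$) each row contributes a factor $(1-\tau^4/\s^4)^{-1/2}$, so boundedness of the $N$-fold product forces $\tau^2\lesssim\s^2/\sqrt{N}$. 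This caps the attainable $Q$-gap at $\s^2/\sqrt{N}$, which is strictly smaller than $A=\s^2\min(s\log(1+\sqrt{p}/s)/N,1)$ whenever $s\log(1+\sqrt{p}/s)\gg\sqrt{N}$ (e.g.\ the dense zone), so $q^*\gtrsim\min(A,\kappa^2)$ does not follow from the same-variance construction. The paper's Lemmas~\ref{lem3} and~\ref{lem3a} avoid this precisely by exploiting the unknown variance: the null is $\prob_{\mathbf 0,1}$ while the alternative atoms carry noise level $\sqrt{1-\tau^2}$, so the marginal variances match and one gets the clean identity $\int d\prob_{\bt,\s}d\prob_{\bt',\s}/d\prob_{\mathbf 0,1}=(1-\langle\bt,\bt'\rangle)^{-N}$ --- note the exponent is linear, not quadratic, in $\langle\bt,\bt'\rangle$, contrary to the $\exp(cN\langle\bu,\bu'\rangle^2/\s^2)$ term in your final paragraph. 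Once you adopt this variance-mismatched construction, your rescaling-and-monotonicity remedy is exactly the paper's move (Lemma~\ref{lem3aa} applied with $\tau$ replaced by $\kappa$), and the rest of your argument goes through.
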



\section{Proof of Theorem~\ref{theo:general}}

In this proof, we write for brevity $(\XX,\bY)$ instead of $(\XX_2,\bY_2)$. Using the notation $\bu=\hat{\bt}-\bt$, and $\bw_j=\sum_{k\neq j} u_k\bX_{k}$
where $u_k$'s are the components of $\bu$ we have  
\begin{align}\label{theogen:decomp}
	&\hat{Q}_D = Q(\bt) + \sum_{j=1}^p \Big[ 2\t_ju_j\Big(1-\frac{\|\bX_{j}\|_2^2}{n} \Big)+2\t_j \frac{\bX_{j}^T}{n}(\s\bxi-\bw_j)\Big] \\
	&+\sum_{j=1}^p \Big[u_j^2+\frac{2}{n}u_j\bX_{j}^T(\s\bxi-\XX\bu)+\frac{1}{n(n-1)}\sum_{k\neq l} X_{kj}X_{lj}(\s\xi_{k}-\bX^k \bu)(\s\xi_{l}-\bX^l \bu)\Big].
\end{align}
Note that $\esp_{\bt,\s} (\hat{Q}_D|\hat{\bt})=Q(\bt)$. Therefore, we need to evaluate the conditional variance of $\hat{Q}_D$ given $\hat{\bt}$. We have
\begin{align}\label{varcrossterm1}
	\var_{\bt,\s} \Big(\sum_{j=1}^p \Big[ 2\t_ju_j\Big(1-\frac{\|\bX_{j}\|_2^2}{n} \Big)+2\s\t_j \frac{\bX_{j}^T}{n}\bxi\Big] \suchthat \hat{\bt} \Big)&\le \frac{C}{n}\sum_{j=1}^p \Big[ \t_j^2u_j^2+\s^2\t_j^2 \Big]\\
	&\le \frac{C}{n}\|\bt\|_2^2(\|\bu\|_2^2+\s^2),
\end{align}
and, since  $\esp[(\bX_{j}^T\bX_{l})^2] = \esp \sum_{m=1}^{n}(X_{jm}X_{lm})^2 =n $ for $j\ne l$,
\begin{align}
	\var_{\bt,\s}\Big(\sum_{j=1}^p\t_j \frac{\bX_{j}^T}{n}\bw_j \suchthat \hat{\bt} \Big) 
	&= \sum_{j=1}^p \frac{\t_j^2}{n^2}\esp_{\bt,\s}\Big[\Big(\sum_{k\neq j}u_k\bX_{j}^T\bX_{k}\Big)^2\suchthat \hat{\bt}\Big]
	+\sum_{j\neq l}\frac{\t_j\t_l}{n^2}\esp_{\bt,\s}(\bX_{j}^T\bw_j\bX_{l}^T\bw_l\suchthat \hat{\bt}) \hspace{.7cm} \nonumber
	\\
	&=\sum_{j\neq k} \frac{\t_j^2}{n}u_k^2 + \sum_{j\neq l}\frac{\t_j\t_l}{n}u_ju_l \phantom{\sum_{j=1}^p \frac{\t_j^2}{n^2}} 
	\\
	&\le \frac2n \|\bt\|_2^2\|\bu\|_2^2.\label{varcrossterm2} \phantom{\sum_{j=1}^p \frac{\t_j^2}{n^2}}
\end{align}
Finally, the double sum in~\eqref{theogen:decomp} can be written as $U=\frac{2}{n(n-1)}\sum_{i < j}h\big((\bX^{i},\xi_{i}),(\bX^{j},\xi_{j})\big)$ where
\begin{equation}
	h\big((\bX^{i},\xi_{i}),(\bX^{j},\xi_{j})\big) = \sum_{k=1}^p \big[X_{ik}(\s \xi_{i}-\mathbf{X}^i\bu)+u_k\big]\big[X_{jk}(\s \xi_j-\mathbf{X}^j\bu)+u_k\big].
\end{equation}
Since,conditionally on $\hat{\bt}$, the terms in $U$ are zero-mean and uncorrelated, the variance of $U$ conditionally on $\hat{\bt}$ equals
\begin{equation}
	\var_{\bt,\s} (U\suchthat \hat{\bt}) = \frac{2}{n(n-1)} \var_{\bt,\s} \Big[ h\big((\bX^{1},\xi_{1}),(\bX^{2},\xi_{2})\big) | \hat{\bt}\Big].
\end{equation}
Now, for all $i,j$,
\begin{equation}\label{comp:ustat1}
	\esp_{\bt,\s}  \Big[\big(X_{ij}(\s\xi_{i}-\bX^i\bu)+u_j\big)^2\suchthat \hat \bt \Big] \le C(\|\bu\|_2^2+\s^2)
\end{equation}
and, for $j\neq k$,
\begin{equation}\label{comp:ustat2}
	\esp_{\bt,\s} \Big[\big(X_{ij}(\s\xi_i-\bX^i\bu)+u_j\big)\big(X_{ik}(\s\xi_i - \bX^i\bu)+u_k\big) \suchthat \hat \bt\Big]=u_ju_k.
\end{equation}
Hence, the conditional variance of $U$ given $\hat{\bt}$ satisfies
\begin{equation}\label{var:ustat}
		\var_{\bt,\s} (U\suchthat \hat{\bt})  \le C\frac{p}{n^2}(\|\bu\|_2^2+\s^2)^2.
\end{equation}
The result follows by combining~\eqref{varcrossterm1}, \eqref{varcrossterm2} and~\eqref{var:ustat}.

\section{Proofs for Section~\ref{sec:plen}}

\subsection{Preliminary lemmas}

The following lemma will be useful.
\begin{lemma}\label{lem:Yuhao}
	Let $p\le \min(\g n, n-14)$ for some $\g\in (0,1)$ small enough. If Condition~\ref{2} holds then there exists a constant $C>0$ depending only on $K$ such that
	\begin{equation}
	\esp \big(\lambda^{-4}_{\min}(\XX_1^T\XX_1)\big)\le Cn^{-4}.
	\end{equation}
	If Conditions~\ref{4} and~\ref{2} hold then there exists a constant $C>0$ depending only on $K,c_8$ such that
	\begin{equation}
	\esp_{\bt,\s}\|\hat{\bt}_{OLS} - \bt\|_2^8\le C\s^8.
	\end{equation}
\end{lemma}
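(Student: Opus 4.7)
The statement decouples into an anti-concentration estimate for the smallest eigenvalue of the Wishart-type matrix $\XX_1^T\XX_1$ (Part 1) and a conditioning argument exploiting the independence of $\XX_1$ and $\bxi_1$ (Part 2). Part 1 is the substantive piece; once it is available, Part 2 is essentially a computation.

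\textbf{Part 1.} Since $\lambda_{\min}(\XX_1^T\XX_1)=s_{\min}^2(\XX_1)$, the goal reduces to $\esp\big[s_{\min}^{-8}(\XX_1)\big]\le Cn^{-4}$. The plan is to establish a small-ball estimate of the form
$$
\prob\big(s_{\min}(\XX_1)\le t\sqrt{n}\big)\le (C_K t)^{n-p+1},\qquad 0<t\le t_0,
$$
with $C_K,t_0$ depending only on $K$. Under Condition~\ref{2}, for any fixed unit vector $v\in S^{p-1}$ the density of $\mathbf{X}_1^i\cdot v$ is bounded by $K/\|v\|_\infty\le K\sqrt{p}$, giving the one-point L\'evy concentration $\prob(|\mathbf{X}_1^i\cdot v|\le\varepsilon)\le 2K\varepsilon\sqrt{p}$. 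Tensorizing across the $n$ independent rows via a Chernoff-type bound controls $\prob(\|\XX_1 v\|_2\le\varepsilon\sqrt{n})$ for each fixed $v$, and a standard net argument on $S^{p-1}$ (in the Rudelson--Vershynin spirit) upgrades this to a uniform statement. The inverse moment then follows by integration: write
$$
\esp\big[s_{\min}^{-8}(\XX_1)\big]=8\int_0^\infty t^{-9}\prob\big(s_{\min}(\XX_1)\le t\big)\,dt,
$$
split at $t=t_0\sqrt{n}$, and bound each piece separately. On the lower range the integrand becomes $C(C_K)^{n-p+1}n^{-(n-p+1)/2} t^{n-p-8}$, which integrates to $O(n^{-4})$ provided $n-p-7>0$; on the upper range the trivial bound already gives $O(n^{-4})$. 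The hypothesis $p\le n-14$ ensures $n-p+1\ge 15$, dominating the $t^{-9}$ factor with comfortable margin.

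\textbf{Part 2.} Writing $\bY_1=\XX_1\bt+\s\bxi_1$ gives $\hat{\bt}_{OLS}-\bt=\s(\XX_1^T\XX_1)^{-1}\XX_1^T\bxi_1$; from the thin SVD of $\XX_1$,
$$
\|\hat{\bt}_{OLS}-\bt\|_2^2=\s^2\bxi_1^T\XX_1(\XX_1^T\XX_1)^{-2}\XX_1^T\bxi_1\le \frac{\s^2}{\lambda_{\min}(\XX_1^T\XX_1)}\,\|\mathbb{P}_1\bxi_1\|_2^2,
$$
where $\mathbb{P}_1$ is the orthogonal projection onto the column space of $\XX_1$. Raising to the fourth power and conditioning on $\XX_1$,
$$
\esp_{\bt,\s}\|\hat{\bt}_{OLS}-\bt\|_2^8\le \s^8\,\esp\Big[\lambda_{\min}^{-4}(\XX_1^T\XX_1)\,\esp\big[\|\mathbb{P}_1\bxi_1\|_2^8\,\big|\,\XX_1\big]\Big].
$$
Write $\mathbb{P}_1=UU^T$ with $U$ an $n\times p$ matrix of orthonormal columns; then $Z_k:=(U^T\bxi_1)_k=\sum_i U_{ik}\xi_{1,i}$ satisfies $\esp Z_k^2=1$ and, by Rosenthal's inequality combined with $\sum_i U_{ik}^8\le\sum_i U_{ik}^2=1$ and Condition~\ref{4}, $\esp Z_k^8\le C$ uniformly in $\XX_1$. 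By H\"older, $\|\mathbb{P}_1\bxi_1\|_2^8=(\sum_{k=1}^p Z_k^2)^4\le p^3\sum_k Z_k^8$, so the inner conditional expectation is at most $Cp^4$. Combining with Part 1 and using $p\le n$ yields $\esp_{\bt,\s}\|\hat{\bt}_{OLS}-\bt\|_2^8\le C\s^8 p^4\, n^{-4}\le C\s^8$.

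\textbf{Main obstacle.} The heart of the argument is Part 1: the small-ball estimate under Condition~\ref{2} alone (bounded density, no sub-Gaussian tail and no assumed moment beyond variance $1$) is delicate, since $\|\XX_1\|_{\mathrm{op}}$ cannot be controlled by a cheap Chernoff bound and the net argument must be carried out carefully. Part 2 is then a routine conditioning computation whose only nontrivial input beyond Part 1 is the Rosenthal bound on $\esp Z_k^8$, which uses precisely the eighth-moment assumption Condition~\ref{4}.
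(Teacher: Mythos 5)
Your Part 2 is essentially correct and in fact slightly sharper than what the paper does: the paper simply bounds the quadratic form by $\lambda_{\min}^{-4}(\XX_1^T\XX_1)\,\|\bxi_1\|_2^8$ and uses independence together with $\esp\|\bxi_1\|_2^8\le Cn^4$ under Condition~\ref{4}, whereas your Rosenthal/H\"older route through the projection $\mathbb{P}_1$ also works. The genuine gap is in Part 1. The small-ball profile you posit,
\begin{equation}
\prob\big(s_{\min}(\XX_1)\le t\sqrt{n}\big)\le (C_K t)^{n-p+1},\qquad 0<t\le t_0,
\end{equation}
is not delivered by the argument you sketch. Under the bounded-density condition alone, the one-point L\'evy concentration for a general unit direction $v$ is only $\prob(|\bX^i v|\le\varepsilon)\le 2K\sqrt{p}\,\varepsilon$ (your own density bound $K/\|v\|_\infty$ can be as bad as $K\sqrt{p}$), and after tensorization and a union bound over a net of $S^{p-1}$ of cardinality $e^{cp}$ this only yields $s_{\min}(\XX_1)\gtrsim\sqrt{n/p}$, a loss of $\sqrt{p}$ that is fatal here: it would give $\esp\big(\lambda_{\min}^{-4}(\XX_1^T\XX_1)\big)\lesssim p^4 n^{-4}$ rather than $n^{-4}$. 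Removing that loss is exactly the hard part, and it cannot be done by a ``standard net argument'' without operator-norm control, which is unavailable under variance one plus bounded density only; you flag this obstacle yourself but do not resolve it, so the key estimate is asserted rather than proved.

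The paper avoids ever proving a uniform-in-$t$ small-ball profile. It combines two much weaker ingredients: (i) a single-scale bound $\prob\big(\lambda_{\min}(\XX_1^T\XX_1)\le C'n\big)\le e^{-Cn}$, imported from the Lecu\'e--Mendelson small-ball method (their Corollary 2.5, tailored to exactly this bounded-density setting and requiring no operator-norm control); and (ii) a very crude polynomial bound $\esp\big(\lambda_{\min}^{-8}(\XX_1^T\XX_1)\big)\le Cn^{8}$, obtained from $\lambda_{\min}^{-8}(\XX_1^T\XX_1)\le n^{8}\,\dist(\bX_1,\mathrm{Span}(\XX_{-1}))^{-16}$ together with the Rudelson--Vershynin bound $C_0^{n-p+1}$ on the density of the projection of a column onto the orthogonal complement of a fixed $(p-1)$-dimensional subspace; this is where the hypothesis $p\le n-14$ enters, to make the negative moment of the distance integrable. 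The Cauchy--Schwarz inequality then glues the two pieces: $\esp\big(\lambda_{\min}^{-4}\big)\le (C'n)^{-4}+\sqrt{Cn^{8}}\,e^{-Cn/2}\le Cn^{-4}$. To salvage your plan you should either cite a small-ball/invertibility result of the required strength directly, or restructure Part 1 along these two-step lines.
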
 
\begin{proof}
In this proof, we write for brevity $(\XX,\bY)$ instead of $(\XX_1,\bY_1)$. First, we note that, almost surely, 
\begin{equation}~\label{dec:lemY0}
	\|\hat{\bt}_{OLS}-\bt\|_2^8\le \s^8 \lambda^{4}_{\max}(\XX(\XX^T\XX)^{-2}\XX^T) \|\bxi\|_2^8 = \s^8 \lambda^{-4}_{\min}(\XX^T\XX) \|\bxi\|_2^8.
\end{equation}
Thus, the proof of the lemma will be complete if we show that $\esp \big[ \lambda_{\min}^{-4}(\XX^T\XX) \big] \le Cn^{-4}$, where $C>0$ is a constant depending only on $K$. To this end, we use the following proposition. 
\begin{proposition}\label{prop:LecueMendelson}
	If Condition~\ref{2} holds and $p\le \g n$ for some $\g\in(0,1)$ small enough then, with probability at least $1-\exp(-Cn)$,
	\begin{equation}
		\l_{\min}(\XX^T\XX) > C'n,
	\end{equation}	
	where $C,C'$ are positive constants depending only on $K$.
\end{proposition}
This proposition follows immediately from combining item~(1) of~Corollary~2.5 in~\cite{LecueMendelson2017} with the remark on page~884  in~\cite{LecueMendelson2017} that gives the explicit form of the small ball condition for a random vector whose coordinates are
independent random variables with bounded Lebesgue density.

Using Proposition~\ref{prop:LecueMendelson} we get
\begin{equation}\label{dec:lemY}
	\esp \big[ \lambda_{\min}^{-4}(\XX^T\XX) \big] \le (C'n)^{-4} + \sqrt{\esp \big[\lambda_{\min}^{-8}(\XX^T\XX) \big]} e^{-Cn/2},
\end{equation}
where $C,C'$ are the constants from Proposition~\ref{prop:LecueMendelson}.

To complete the proof, we now show that   $\esp \big[\lambda_{\min}^{-8}(\XX^T\XX) \big]\le Cn^8$, where $C>0$ is a constant depending only on $K$. 
Denote by $\XX_{-1}$ the matrix obtained by removing the first column from $\XX$ and by  ${\rm Span}(\XX_{-1})$ the span of its columns.  Arguing quite analogously to the proof of Lemma~3 in~\cite{carpentier2018minimax} we obtain 
\begin{equation}
	\esp \big[ \lambda_{\min}^{-8}(\XX^T\XX)\big] \leq n^8 \esp[{\rm dist}(\bX_1,{\rm Span}(\XX_{-1}))^{-16}],
\end{equation}
where ${\rm dist}(\bX_1,{\rm Span}(\XX_{-1}))$ is the Euclidean distance from $\bX_1$ to ${\rm Span}(\XX_{-1})$.
Note that, under Condition~\ref{2}, the vector space ${\rm Span}(\XX_{-1})$ has  almost surely dimension $p-1$. Recall also that $\bX_1$ is independent from $\XX_{-1}$. Thus,
\begin{equation}
	\esp[{\rm dist}(\bX_1,{\rm Span}(\XX_{-1}))^{-16}] \le \max_S \esp[{\rm dist}(\bX_1,S)^{-16}] = \max_S \esp[\|P_S^\perp \bX_1\|_2^{-16}] ,
\end{equation}
where the maximum is taken over all $(p-1)$-dimensional vector subspaces $S$ of $\RR^n$ and we denote by
$P_S^\perp$ the orthogonal projector on $S^\perp$. Furthermore, by Theorem~1.1 in~\cite{RudelsonVershynin2015}, under Condition~\ref{2} the density of $P_S^\perp \bX_1$ is bounded almost everywhere by $C_0^{n-p+1}$, where $C_0=CK$ for an absolute constant $C>0$. 
 We have
\begin{equation}
	\esp[\|P_S^\perp \bX_1\|_2^{-16}] \le  C_0^{16} + \esp \big[ \|P_S^\perp \bX_1\|_2^{-16} \fcar_{\|P_S^\perp \bX_1\|_2 < C_0^{-1} } \big].
\end{equation}
As
 $\max_{k\ge 1}2\pi^{(k+1)/2}/  \G\big((k+1)/2\big)$ is less than an absolute constant, it holds that, for $n\ge p+14$,
\begin{align}
	\esp \big[ \|P_S \bX_1\|_2^{-16} \fcar_{\|P_S \bX_1\|_2 < C_0^{-1}} \big] &\le C_0^{n-p+1} \int_{\|z\|_2\le C_0^{-1}} \|z\|^{-16} \, {\rm d}z 
	\\
	&= C_0^{n-p+1} \frac{2\pi^{(n-p+2)/2}}{\G\big((n-p+2)/2\big)}\int_0^{C_0^{-1}} r^{n-p-15} \, {\rm d}r\le C
\end{align}
where $C>0$ is a constant depending only on $K$. Hence, $\esp \big[ \lambda_{\min}^{-8}(\XX^T\XX)\big] \leq Cn^8$ where $C>0$ depends only on $K$. Together with~\eqref{dec:lemY0} and~\eqref{dec:lemY} this proves the lemma.
\end{proof}

The following lemma summarizes some properties of the estimator 
$$
\hat{\s}_{ OLS} := \frac{\|\bY_1-\XX_1\hat{\bt}_{OLS}\|_2}{\sqrt{n-p}}.
$$
\begin{lemma}\label{lem:hatsigma_ols}
	If Condition~\ref{1} holds and $p\le \g n$ for some $\g\in(0,1)$, then there exists a constant $C>0$ depending only on $L$ such that
	\begin{equation}
	\prob_{\bt,\s}\Big( \Big|\frac{\hat{\s}^2_{OLS}}{\s^2}- 1\Big|\le \frac{1}{2}\Big) \ge 1-2e^{-(1-\g)n/C} \quad \text{and} \quad \esp_{\bt,\s} \big(\hat \s_{OLS}^4\big) \le C \s^4.
	\end{equation}
\end{lemma}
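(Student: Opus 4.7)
The plan is to exploit the standard identity $\bY_1 - \XX_1 \hat{\bt}_{OLS} = \s (\mathbb I_n - P_{\XX_1})\bxi_1$, where $P_{\XX_1} = \XX_1(\XX_1^T\XX_1)^{-1}\XX_1^T$ is the orthogonal projector onto the column span of $\XX_1$ (well defined almost surely whenever $\XX_1^T\XX_1$ is invertible, which holds almost surely under the implicit Condition~\ref{2}; otherwise one replaces $P_{\XX_1}$ by the projector onto the column range). Writing $M := \mathbb I_n - P_{\XX_1}$, on the event that $\XX_1$ has full column rank $p$ (which occurs almost surely and is used throughout the paper), $M$ is an orthogonal projector of rank exactly $n-p$, so that $\mathrm{tr}(M) = n-p$, $\|M\|_F^2 = n-p$, and $\|M\|_{\infty} = 1$. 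Hence
\[
\hat{\s}_{OLS}^2 = \frac{\s^2}{n-p}\, \bxi_1^T M \bxi_1,
\]
and the proof reduces to controlling a quadratic form in independent sub-Gaussian variables.

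Conditioning on $\XX_1$, the vector $\bxi_1$ has independent, zero-mean, unit-variance, $L$-sub-Gaussian entries and is independent of $M$. I would then invoke the Hanson--Wright inequality: there exists an absolute constant $c>0$ such that
\[
\prob_{\bt,\s}\bigl(|\bxi_1^T M \bxi_1 - (n-p)|> t \,\big|\, \XX_1\bigr) \le 2 \exp\!\Big(-c\min\Big(\frac{t^2}{L^4(n-p)},\,\frac{t}{L^2}\Big)\Big).
\]
Choosing $t = (n-p)/2$, the right-hand side is at most $2\exp(-c(n-p)/(4L^2))$, and the condition $p\le \g n$ gives $n - p \ge (1-\g)n$. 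Taking expectation over $\XX_1$ (the complement of the full-rank event has probability $0$) yields the first claim with $C = 4L^2/c$.

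For the moment bound, I would use the conditional variance estimate for sub-Gaussian quadratic forms (a direct computation using the fourth moment or an integration of the Hanson--Wright tail):
\[
\esp_{\bt,\s}\bigl[(\bxi_1^T M \bxi_1 - (n-p))^2 \,\big|\, \XX_1\bigr] \le C_1 L^4 \|M\|_F^2 = C_1 L^4 (n-p),
\]
which combines with $(n-p)^2$ from the squared mean to give $\esp_{\bt,\s}[(\bxi_1^T M\bxi_1)^2\mid \XX_1] \le C_2 (n-p)^2$. Dividing by $(n-p)^2$ and taking expectation over $\XX_1$ yields $\esp_{\bt,\s}(\hat \s_{OLS}^4) \le C\s^4$.

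The only real obstacle is the routine bookkeeping needed to guarantee that $M$ has rank exactly $n-p$ almost surely, so that the Frobenius and operator norm inputs to Hanson--Wright are as claimed; this follows from the almost sure invertibility of $\XX_1^T\XX_1$. Once that is in place, both assertions reduce to standard sub-Gaussian concentration applied conditionally on $\XX_1$.
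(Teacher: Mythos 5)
Your proposal is correct and follows essentially the same route as the paper: identify $\hat{\s}_{OLS}^2/\s^2$ as a quadratic form $\bxi_1^T M\bxi_1/(n-p)$ in an almost surely rank-$(n-p)$ projector, apply the Hanson--Wright inequality conditionally on $\XX_1$ with $n-p\ge(1-\g)n$, and obtain the fourth-moment bound by integrating (or equivalently bounding the conditional variance from) the same tail inequality.
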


\begin{proof} 
We have
\begin{equation}
	\frac{\hat{\s}_{OLS}^2}{\s^2}=\frac{\| \mathbb{P} \bxi\|_2^2}{n-p},
\end{equation}
where $\mathbb{P}$ is the orthogonal projector on the orthogonal complement of the range of $\XX_1$, which has rank $n-p$ almost surely 
and satisfies $\|\mathbb{P}\|_{\infty}=1$, $\|\mathbb{P}\|_F^2=n-p$. Applying the Hanson-Wright inequality for subGaussian variables (\cf Theorem 6.2.1 in \cite{vershyninbook}) conditionally on $\XX_1$  we obtain that there is an absolute constant $C>0$ such that, for all $t\ge 0$, 
\begin{equation}\label{hanson}
	\prob_{\bt,\s}\Big(\Big| \frac{\hat{\s}_{OLS}^2}{\s^2}-1\Big| >t\Big)\le 2\exp \Big(-C\min\Big\{\frac{(n-p)t^2}{L^4}, \frac{(n-p)t}{L^2}\Big\} \Big).
\end{equation}
Hence for $0\le u\le n-p$, 
\begin{equation}\label{lemhatsigmaols1}
	\prob_{\bt,\s}\Big(\Big| \frac{\hat{\s}_{OLS}^2}{\s^2}-1\Big| >L^2\sqrt{\frac{u}{n-p}}\Big)\le 2\exp(-Cu).   
\end{equation} 
Taking $u=(2\max (L^2,1))^{-1}(n-p) \ge C'(1-\g)n$ yields the first inequality of the lemma. The bound on the fourth moment of $\hat \s_{OLS}$ follows by integrating \eqref{hanson}.
\end{proof}

	Assuming that Condition~\ref{2} holds we set
	\begin{equation}
		\htau_j=2\a\hat{\s}_{OLS} \sqrt{[(\XX_1^T\XX_1)^{-1}]_{jj} \log(1+p/s^2)},
	\end{equation}
	where $[(\XX_1^T\XX_1)^{-1}]_{jj}$ denotes the $j$th diagonal entry of matrix $(\XX_1^T\XX_1)^{-1}$ \footnote{Recall that $\XX_1^T\XX_1$ is almost surely invertible under Condition~\ref{2}, so that operating with $(\XX_1^T\XX_1)^{-1}$ is formally legitimate everywhere except an event of zero probability, on which we set, for example, $\htau_j=0$. We do not further invoke this detail since it does not influence the argument.}.
	\begin{lemma}\label{lem:indic}
	Let $p\le\gamma n$ for some $\g\in(0,1)$ and let Conditions~\ref{2} and~\ref{1} hold. Then there exist $\a>0$ is large enough and a constant $C>0$ depending only on $\g,L$ such that 
	\begin{equation}
		\sup_{\bt\in B_0(s)} \sup_{j: \t_j=0} \prob_{\bt,\s}\big(|\hat{\t}_{OLS,j}|>\htau_j\big)\le  C\frac{s^4}{p^2}
	\end{equation}
	and, for all $j\in \{1,\ldots, p\}$,
	\begin{equation}
		\sup_{\bt\in B_0(s)} \prob_{\bt,\s}\big(|\hat{\t}_{OLS,j}|\le \htau_j, |\t_j|\ge 2\htau_j  | \XX_1\big)\le 2 \exp\Big(-\frac{\t_j^2}{C\s^2 [(\XX_1^T\XX_1)^{-1}]_{jj}}\Big).
	\end{equation}	 
\end{lemma}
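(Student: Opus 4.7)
The plan is to exploit the fact that, conditionally on $\XX_1$, we have $\hat{\bt}_{OLS}-\bt = \s(\XX_1^T\XX_1)^{-1}\XX_1^T\bxi$, so the $j$th coordinate admits the representation $\hat{\t}_{OLS,j}-\t_j = \s\,\mathbf{c}_j^T\bxi$, where $\mathbf{c}_j := \XX_1(\XX_1^T\XX_1)^{-1}\mathbf{e}_j$ satisfies $\|\mathbf{c}_j\|_2^2 = [(\XX_1^T\XX_1)^{-1}]_{jj}$. Since the coordinates of $\bxi$ are independent and $L$-subGaussian under Condition~\ref{1}, the standard Hoeffding bound for linear combinations of subGaussian variables gives, for every $t>0$,
\begin{equation*}
\prob_{\bt,\s}\big(|\hat{\t}_{OLS,j}-\t_j|>t \,\big|\, \XX_1\big) \;\le\; 2\exp\Big(-\tfrac{t^2}{C_0 L^2\s^2[(\XX_1^T\XX_1)^{-1}]_{jj}}\Big)
\end{equation*}
for an absolute constant $C_0>0$. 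This inequality will be the workhorse for both parts.

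The second inequality of the lemma follows from a deterministic reduction. On the event $\{|\hat{\t}_{OLS,j}|\le \htau_j,\,|\t_j|\ge 2\htau_j\}$ one has $|\hat{\t}_{OLS,j}|\le |\t_j|/2$, hence $|\hat{\t}_{OLS,j}-\t_j|\ge |\t_j|/2$ by the reverse triangle inequality. Plugging $t=|\t_j|/2$ into the conditional subGaussian bound above yields the claimed inequality with $C=4C_0L^2$.

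For the first inequality, the obstacle is that $\htau_j$ depends on $\hat{\s}_{OLS}$, which is a function of $\bxi$ and therefore not $\XX_1$-measurable. I would decouple by splitting on the event $\mathcal{E} := \{\hat{\s}_{OLS}^2 \ge \s^2/2\}$. By Lemma~\ref{lem:hatsigma_ols}, $\prob_{\bt,\s}(\mathcal{E}^c) \le 2e^{-(1-\g)n/C}$, which, in the regime $p\le \g n$ with $s\ge 1$, is negligible compared to $s^4/p^2$. On $\mathcal{E}$ the threshold satisfies $\htau_j \ge \sqrt{2}\,\a\,\s\sqrt{[(\XX_1^T\XX_1)^{-1}]_{jj}\log(1+p/s^2)}$, so since $\t_j=0$, the workhorse inequality applied conditionally on $\XX_1$ gives
\begin{equation*}
\prob_{\bt,\s}\big(|\hat{\t}_{OLS,j}|>\htau_j,\,\mathcal{E}\,\big|\, \XX_1\big) \;\le\; 2\exp\Big(-\tfrac{2\a^2}{C_0 L^2}\log(1+p/s^2)\Big) \;=\; 2(1+p/s^2)^{-2\a^2/(C_0L^2)}.
\end{equation*}
Choosing $\a$ large enough that $2\a^2/(C_0L^2)\ge 2$ makes this at most $2(1+p/s^2)^{-2}\le 2s^4/p^2$, and integrating over $\XX_1$ closes the argument.

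The only substantial difficulty is precisely the entanglement between the data-dependent threshold $\htau_j$ and the noise $\bxi$ through $\hat{\s}_{OLS}$; the decoupling via the event $\mathcal{E}$ is the technical device that lets the conditional subGaussian bound take over, and its use is what forces the tuning parameter $\a$ to be chosen large as a function of $L$.
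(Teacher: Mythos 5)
Your proposal is correct and follows essentially the same route as the paper: the conditional subGaussian tail bound for $[(\XX_1^T\XX_1)^{-1}\XX_1^T\bxi]_j$ with variance proxy proportional to $[(\XX_1^T\XX_1)^{-1}]_{jj}$, the reduction $|\hat{\t}_{OLS,j}-\t_j|\ge |\t_j|/2$ for the second inequality, and the decoupling of the random threshold via the concentration of $\hat{\s}_{OLS}$ from Lemma~\ref{lem:hatsigma_ols} for the first. The only cosmetic difference is that the paper passes to the deterministic-in-$\hat\s$ threshold $\tau_j=\a\s\sqrt{[(\XX_1^T\XX_1)^{-1}]_{jj}\log(1+p/s^2)}$ rather than your $\sqrt{2}\,\a\,\s\sqrt{\cdots}$, which changes nothing of substance.
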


\begin{proof}
	For brevity, in this proof we write $(\XX,\bY)$, $\hat{\bt}$ and $\hat{\s}$ instead of $(\XX_1,\bY_1), \hat{\bt}_{OLS}$ and $\hat{\s}_{OLS}$,  respectively.  Set 
	\begin{equation}
		\tau_j=\a \s \sqrt{[(\XX^T\XX)^{-1}]_{jj} \log(1+p/s^2)}.
	\end{equation}
	Using Lemma~\ref{lem:hatsigma_ols}, we have 
	\begin{equation}
		\prob_{\bt,\s}\big(|\hat{\t}_j|>\htau_j\big)\le \prob_{\bt,\s}\big(|\hat{\t}_j|> \tau_j\big) + 2 e^{-C'(1-\g)n}.
	\end{equation}
	Next, if $\t_j=0$, then the fact that $\hat{\bt}=\bt+\s(\XX^T\XX)^{-1}\XX^T \bxi$ yields
	\begin{equation}
		\prob_{\bt,\s}\big(|\hat{\t}_{j}|>\tau_j \suchthat \XX \big) = \prob\big\{ |[(\XX^T\XX)^{-1}\XX^T\bxi]_j|>\frac{\tau_j}{\s} \suchthat \XX \big\}.
	\end{equation}
	Note that, conditionally on $\XX$, the random variable $[(\XX^T\XX)^{-1}\XX^T\bxi]_j$ is $L\sqrt{[(\XX^T\XX)^{-1}]_{jj}}$-subGaussian. Thus, if $\a>2L$, 
	\begin{equation}
		\prob\big\{|[(\XX^T\XX)^{-1}\XX^T\bxi]_j|>\frac{\tau_j}{\s} \suchthat \XX \big\} \le 2 \exp\Big(-\frac{\tau_j^2}{2\s^2 L^2 [(\XX^T\XX)^{-1}]_{jj}}\Big)
		\le 
		C\frac{s^4}{p^2}.
	\end{equation}
	Combining the above bounds and taking into account the condition $p\le \g n$ we obtain the first inequality of the lemma.  To prove the second inequality, we note that
	\begin{align}
	 \prob_{\bt,\s}\big(|\hat{\t}_{j}|\le \htau_j, |\t_j|\ge 2\htau_j  | \XX\big)&\le
	 \prob_{\bt,\s}\big(|\hat{\t}_{j}-\t_j|\ge |\t_j|/2  | \XX\big)\\
	 &
	 = \prob\big\{|[(\XX^T\XX)^{-1}\XX^T\bxi]_j|> |\t_j|/(2\s) \suchthat \XX \big\} \\
	 &
	  \le 2 \exp\Big(-\frac{\t_j^2}{8L^2\s^2 [(\XX^T\XX)^{-1}]_{jj}}\Big).
	\end{align}	
\end{proof}

\subsection{Proof of Theorem~\ref{theo:plen_dense}}

The first inequality of the theorem is straightforward in view of Theorem~\ref{theo:general} and Lemma~\ref{lem:Yuhao}. The second inequality (that is, the upper bound on the squared risk of $\hat{\L}_D^{LD}$) is deduced from the first inequality exactly in the same way as it is done in the proof of Theorem 8 in~\cite{Collier2017}. 

\subsection{Proof of Theorem~\ref{theo:plen:sparse}}\label{sec:proofTH3}

For brevity, in this subsection  we write $(\XX,\bY)$, $\hat{\bt}$ and $\hat{\s}$ instead of $(\XX_2,\bY_2), \hat{\bt}_{OLS}$ and $\hat{\s}_{OLS}$,  respectively. We also set $\hat Q = \hat Q_{S}^{LD}$, where $\hat{Q}_{S}^{LD}$ is defined in~\eqref{def:QSLD}. Let $\htau_j$ and $\tau_j$ be the same as in~Lemma~\ref{lem:indic}.

Denoting by $\mathcal{S}$ the support of $\bt$ and by $S^\complement$ its complement, we have
\begin{align}\label{theo:plen_sparse_1}
	\esp_{\bt,\s}\big(\hat{Q}-Q(\bt)\big)^2\le  2\esp_{\bt,\s}\Big(\sum_{j\in \mathcal S^\complement} a_j(\hat \bt)\fcar_{|\hat{\bt}_j|>\hat{\tau}_j}\Big)^2 + 2\esp_{\bt,\s}\Big(\sum_{j\in \mathcal S} \{ a_j(\hat \bt) \fcar_{|\hat{\bt}_j|>\hat{\tau}_j}-\t_j^2 \} \Big)^2.
\end{align}
Consider the first sum on the right-hand side of \eqref{theo:plen_sparse_1}. Acting as in the proof of Theorem~\ref{theo:general} we get
\begin{equation}\label{eq:star}
	\sum_{j\in \mathcal S^\complement} a_j(\hat \bt) \fcar_{|\hat{\t}_j|>\hat{\tau}_j} = \frac{2}{n(n-1)}\sum_{i< j} \tilde h\big((\bX^{i},\xi_{i}),(\bX^{j},\xi_{j})\big),
\end{equation}
where
\begin{equation}
	\tilde{h} \big((\bX^{i},\xi_{i}),(\bX^{j},\xi_{j})\big) = \sum_{k \in \mathcal S^\complement} \big[X_{ik}(\s\xi_i - \bX^i\bu) + u_k\big]\big[X_{jk}(\s\xi_j - \bX^j\bu) + u_k\big]\fcar_{|\hat{\t}_k|>\hat{\tau}_k}.
\end{equation}
Conditionally on $(\XX_1,\bY_1)$, the terms in \eqref{eq:star} are zero-mean and distinct terms are uncorrelated. 
Thus, 
\begin{equation}\label{for:th:51}
	\esp_{\bt,\s} \Big[\Big( \sum_{j\in \mathcal S^\complement} a_j(\hat \bt)\fcar_{|\hat{\t}_j|>\hat{\tau}_j}\Big)^2 \suchthat \XX_1,\bY_1\Big]
	= \frac{2}{n(n-1)} \esp_{\bt,\s} \big[\tilde{h}^2\big((\bX^{1},\xi_{1}),(\bX^{2},\xi_{2})\big)\suchthat \XX_1,\bY_1\big]
\end{equation}
and, by an argument analogous to~\eqref{comp:ustat1} and~\eqref{comp:ustat2}, we find
\begin{align}\label{for:th:5}
&\esp_{\bt,\s} \Big[\Big( \sum_{j\in \mathcal S^\complement} a_j(\hat \bt)\fcar_{|\hat{\t}_j|>\hat{\tau}_j}\Big)^2 \Big]\\
&\qquad \le
	\frac{C}{n^2}\esp_{\bt,\s} \Big[(\|\bu\|_2^2+\s^2)^2 \sum_{k\in \mathcal S^\complement} \fcar_{|\hat{\t}_k|>\hat{\tau}_k} + \sum_{k,k'\in \mathcal S^\complement: k'\neq k} u_{k'}^2u_k^2 \fcar_{|\hat{\t}_{k'}|>\hat{\tau}_{k'}}\fcar_{|\hat{\t}_k|>\hat{\tau}_k} \Big].
\end{align}
Then, by the Cauchy-Schwarz inequality, 
\begin{align}
&\esp_{\bt,\s} \Big[\Big( \sum_{j\in \mathcal S^\complement} a_j(\hat \bt)\fcar_{|\hat{\t}_j|>\hat{\tau}_j}\Big)^2 \Big]\\
&\qquad \le
	\frac{C}{n^2} \esp_{\bt,\s}\Big[(\s^4+\|\bu\|_2^4)\sum_{k\in \mathcal S^\complement} \fcar_{|\hat{\t}_k|>\htau_j}\Big] + \frac{C}{n^2} \esp_{\bt,\s} \Big( \sum_{\substack{k',k \in \mathcal{S}^\complement \\ k'\neq k}} u_{k'}^2 u_k^2\fcar_{|\hat{\t}_{k'}|>\hat{\tau}_{k'}}\fcar_{|\hat{\t}_k|>\hat{\tau}_k} \Big) \\
&\qquad \le \frac{C}{n^2} \Big[\s^4 + \sqrt{\esp_{\bt,\s} \|\bu\|_2^8} \Big] \sum_{k\in \mathcal S^\complement} \sqrt{\prob_{\bt,\s}(|\hat{\t}_k|>\htau_k)}. \label{eq:33a}
\end{align}
Using here Lemmas~\ref{lem:Yuhao} and~\ref{lem:indic} we conclude that
\begin{equation}\label{S1}
\esp_{\bt,\s} \Big[\Big( \sum_{j\in \mathcal S^\complement} a_j(\hat \bt)\fcar_{|\hat{\t}_j|>\hat{\tau}_j}\Big)^2 \Big]
	 \le C\s^4\frac{s^2}{n^2}.
\end{equation}

Next, we consider the second sum on the right-hand side of~\eqref{theo:plen_sparse_1}. Similar to the proof of Theorem~\ref{theo:general} we obtain 
\begin{equation}\label{S2_decomp}
	\esp_{\bt,\s} \Big(\sum_{j\in \mathcal S} a_j(\hat \bt) \fcar_{|\hat{\t}_j|>\hat{\tau}_j}-\t_j^2\Big)^2\le C(S_{1}+S_{2}+S_{3}),
\end{equation}
where 
\begin{equation}
	S_{1}=\esp_{\bt,\s} \Big[\sum_{j\in \mathcal S} \t_j u_j \fcar_{|\hat{\t}_j|>\htau_j} \Big(1-\frac{\|\bX_j\|_2^2}{n} \Big)+ \t_j\frac{\bX_j^T}{n}(\s\bxi-\bw_j)\fcar_{|\hat{\t}_j|>\htau_j}\Big]^2$$
$$ S_{2}=\esp_{\bt,\s} \Big[\frac{2}{n(n-1)}\sum_{i< j} \bar{h} \big((\bX^{i},\xi_{i}),(\bX^{j},\xi_{j})\big) \Big]^2, \quad  S_{3}=\esp_{\bt,\s} \Big[\sum_{j\in \mathcal S} \t_j^2\fcar_{|\hat{\t}_j|\le \htau_j}\Big]^2.
\end{equation}
and $\bar{h}$ is the equivalent of $\tilde{h}$ for indices in $\mathcal S$ instead of $\mathcal S^\complement$. Thus, $S_2$ does not exceed the
expression in \eqref{eq:33a} where $\mathcal S^\complement$ is replaced by $\mathcal S$:
\begin{align}\label{eq:33b}
S_2
& \le \frac{C}{n^2} \Big[\s^4 + \sqrt{\esp_{\bt,\s} \|\bu\|_2^8} \Big] \sum_{k\in \mathcal S} \sqrt{\prob_{\bt,\s}(|\hat{\t}_k|>\htau_k)}. 
\end{align}
  Bounding in this expression $\prob_{\bt,\s}(|\hat{\t}_k|>\hat{\tau}_k)$ by $1$ we get
\begin{equation}\label{S22}
	S_{2}\le C\s^4\frac{s}{n^2}.
\end{equation}
Next, arguing as in~\eqref{varcrossterm1} and~\eqref{varcrossterm2} and using Lemma~\ref{lem:Yuhao} we obtain
\begin{equation}\label{S21}
	S_1 \le \frac{C}{n}\|\bt\|_2^2\esp_{\bt,\s}(\|u\|_2^2+\s^2) \le \frac{C\s^2}{n}\|\bt\|_2^2.
\end{equation}
Finally, introducing the notation $\mathbb B = (\XX_1^T\XX_1)^{-1}$  we find that the term $S_3$ satisfies
\begin{align}\label{S3}
	S_{3} &= \esp_{\bt,\s}  \Big[ \Big(\sum_{j\in \mathcal S} \t_j^2\fcar_{\{|\hat{\t}_j|\le \htau_j, |\t_j|\le 2\htau_j\}}+\sum_{j\in \mathcal S} \t_j^2\fcar_{\{|\hat{\t}_j|\le \htau_j, |\t_j|> 2\htau_j\}}\Big)^2\Big]
	\\ \nonumber
	&\le C \esp_{\bt,\s} \sum_{j,k \in \mathcal S} \big\{ \htau_j^2\htau_k^2 + \t_j^2\t_k^2 \fcar_{\{|\hat{\t}_j|\le \htau_j, |\t_j|> 2\htau_j\}} \fcar_{\{|\hat{\t}_k|\le \htau_k, |\t_k|> 2\htau_k\}} \big\} \phantom{\Big(\sum_{j\in \mathcal S} \t_j^2\sqrt{\prob_{\bt,\s}}} 
	\\ \nonumber
	&\le C   \sum_{j,k\in \mathcal S} \esp_{\bt,\s} \Big(\hat{\s}^4B_{jj}B_{kk}\Big)\log^2\Big(1+\frac{p}{s^2}\Big)+ C\esp_{\bt,\s} \Big[\Big(\sum_{j\in \mathcal S} \t_j^2 \prob_{\bt,\s}^{1/2}(|\hat{\t}_j|\le \htau_j, |\t_j|> 2\htau_j| \XX_1)\Big)^2\Big].
\end{align}
Here, 
$$
\sum_{j,k\in \mathcal S}\esp_{\bt,\s} \Big(\hat{\s}^4B_{jj}B_{kk}\Big) \le \esp_{\bt,\s} (\hat{\s}^4 s^2 \l_{\max}^2(\mathbb B)) \le 
 s^2 (\esp_{\bt,\s} (\hat{\s}^8))^{1/2} (\esp_{\bt,\s} (\l_{\max}^4(\mathbb B)))^{1/2} \le C\s^4\frac{s^2}{n^2},
$$
where the last inequality follows from Lemmas~\ref{lem:Yuhao} and~\ref{lem:hatsigma_ols}. Furthermore, using  Lemmas~\ref{lem:Yuhao} and ~\ref{lem:indic} we get 
\begin{align}
\esp_{\bt,\s} \Big[\Big(\sum_{j\in \mathcal S} \t_j^2\prob_{\bt,\s}^{1/2}(|\hat{\t}_j|\le \htau_j, |\t_j|> 2\htau_j| \XX_1)\Big)^2\Big]
&\le 
\esp\Big[\Big(\s^2 \sum_{j\in \mathcal S} B_{jj} \max_{x>0} \{ x^2\exp(-Cx^2)\} \Big)^2\Big]\\
&\le
C \s^4s^2\esp^2 \big(\l_{\max}^2(\mathbb B)\big)\le C\s^4\frac{s^2}{n^2}.
\end{align}
It follows that
\begin{align}\label{S23}
	S_3
	&\le C\s^4\frac{s^2}{n^2}\log^2\Big(1+\frac{p}{s^2}\Big). \phantom{\sum_{j,k \in \mathcal S}^j\frac{s^4}{•}} 
\end{align}
Combining \eqref{theo:plen_sparse_1},  \eqref{S1} -- \eqref{S23} yields the bound \eqref{eq:Q} of the theorem for $\hat{Q}_S^{LD}$.
The bound \eqref{eq:N} for $\hat{\L}_S^{LD}$  is deduced from \eqref{eq:Q} exactly in the same way  as it is done in the proof of Theorem 8 in~\cite{Collier2017}.

\section{Proofs for Section~\ref{sec:pgen}}

\subsection{Proof of Proposition~\ref{proposition_derumigny}}

We use a combination of arguments from~\cite{BellecLecueTsybakov2017} and~\cite{Derumigny2017}. By Theorem~6.1 in~\cite{Derumigny2017}, if $\bxi$ is standard Gaussian and $\XX$ satisfies weighted restricted eigenvalue condition then the result of Proposition~\ref{proposition_derumigny} holds. Now, note that, by Theorem~8.3 in~\cite{BellecLecueTsybakov2017},  if $\k_0$ in Condition~\ref{6}  is small enough and Condition~\ref{5} holds then $\XX$ satisfies weighted restricted eigenvalue condition  with probability at least $1-3\exp(-Cn)$ for some constant $C>0$ depending only on $M$. 
Thus, Proposition~\ref{proposition_derumigny} holds when $\bxi$ is standard Gaussian. In order to extend it to subGaussian $\bxi$, we note that,  in the proof of Theorem~6.1 in~\cite{Derumigny2017}, the only ingredients involving $\bxi$ are 
Lemmas~7.6 and~7.7 and those lemmas remain valid for subGaussian noise (with possibly different constants depending on $L$). Indeed,  Lemma~7.6 in~\cite{Derumigny2017} states that $\|\bxi\|_2^2/n$ is between two  absolute constants with probability  at least $1-C'\exp(-n/C')$, which remains true for subGaussian  $\bxi$. As concerns Lemma~7.7, it can be replaced by Theorem~9.1 from~\cite{BellecLecueTsybakov2017}  that we state here in the following form. 
\begin{proposition}[Adapted from Theorem~9.1 in~\cite{BellecLecueTsybakov2017}]\label{proposition_tool}
	Let $t>0$ and let $\mathbb{X}$ be a design matrix such that $\max_{1\le i \le p} \|\bX_i\|_2^2\le2n$. Assume that Condition~\ref{1} holds. Then, there is a constant $C>0$ such that, for all $\bu\in\RR^p$,
	\begin{equation}
	\frac1n |\bxi^T \mathbb{X}\bu| \le C \big(G(\bu)\vee \|\bu\|_*\big)
	\end{equation}
	with probability at least $1-e^{-t}$, where $G(\bu) = \frac{\sqrt{t}+1}n \ \|\mathbb{X}\bu\|_2.$
\end{proposition}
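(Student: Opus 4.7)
The plan is to establish two complementary bounds and combine them via the maximum on the right-hand side. The first is a pointwise sub-Gaussian concentration bound yielding $G(\bu)$; the second is a uniform bound yielding $\|\bu\|_{*}$, obtained via duality of the sorted $\ell_1$ norm and control of the order statistics of $\mathbb{X}^T\bxi/n$.

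For the pointwise bound, observe that, conditionally on $\mathbb{X}$, the quantity $\bxi^T\mathbb{X}\bu = \sum_i \xi_i (\mathbb{X}\bu)_i$ is a weighted sum of independent $L$-subGaussian random variables, hence itself $L\|\mathbb{X}\bu\|_2$-subGaussian. The standard Hoeffding-type deviation inequality immediately yields, for any fixed $\bu$,
\[
\frac{1}{n}|\bxi^T\mathbb{X}\bu| \le \frac{CL(\sqrt{t}+1)}{n}\|\mathbb{X}\bu\|_2 = CG(\bu)
\]
on an event of probability at least $1-2e^{-t}$.

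For the uniform bound, I would invoke the Fenchel-type duality inequality
\[
|\langle \bu, \bv\rangle| \le \|\bu\|_{*}\cdot \|\bv\|_{**}, \qquad \|\bv\|_{**} := \max_{1\le k\le p} \frac{\sum_{j=1}^{k} |v|^{\downarrow}_{j}}{\sum_{j=1}^{k}\lambda_j},
\]
where $|v|^{\downarrow}_{1}\ge \cdots \ge |v|^{\downarrow}_{p}$ denotes the decreasing rearrangement of $|\bv|$. Applied with $\bv = \mathbb{X}^T\bxi/n$, this reduces the task to showing $\|\mathbb{X}^T\bxi/n\|_{**} \le C$ with probability at least $1-e^{-t}$. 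Conditionally on $\mathbb{X}$, each coordinate $(\mathbb{X}^T\bxi)_j = \bX_j^T\bxi$ is $L\|\bX_j\|_2$-subGaussian, and the assumption $\|\bX_j\|_2^2\le 2n$ makes it $L\sqrt{2n}$-subGaussian. A peeling argument over dyadic blocks of the order statistics, combined with the sub-Gaussian maximal inequality, then shows that, with probability at least $1-e^{-t}$,
\[
\sum_{j=1}^{k} |\mathbb{X}^T\bxi|^{\downarrow}_{j} \le C' \sum_{j=1}^{k} \sqrt{n\log(2p/j)} \qquad \text{for all } k \in \{1,\ldots,p\} \text{ simultaneously}.
\]
Dividing by $n\sum_{j\le k}\lambda_j$ and choosing the tuning constant $c_{\sf srs,1}$ in $\lambda_j=c_{\sf srs,1}\sqrt{\log(2p/j)/n}$ sufficiently large yields $\|\mathbb{X}^T\bxi/n\|_{**} \le C$ on that event, and hence the uniform bound $|\bxi^T\mathbb{X}\bu|/n \le C\|\bu\|_{*}$.

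Taking the intersection of the two events and the maximum of the two bounds delivers the claim; the constant prefactor on $e^{-t}$ is absorbed by a harmless rescaling of $t$. The principal obstacle is the peeling step: a crude entrywise union bound over the $p$ coordinates would impose a uniform $\sqrt{\log p}$ factor on every order statistic, whereas the weights $\lambda_j\asymp\sqrt{\log(2p/j)/n}$ demand the sharper layer-by-layer rate $\sqrt{\log(2p/j)}$ for the $j$th order statistic. Matching this rate requires carefully calibrating the union bound across dyadic levels $k\in [2^{\ell},2^{\ell+1})$ so that the total failure probability stays below $e^{-t}$, which is the technical heart of Theorem 9.1 in~\cite{BellecLecueTsybakov2017} that we are invoking here.
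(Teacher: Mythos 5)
The paper itself offers no proof of this proposition: it is imported, with adjusted constants, from Theorem~9.1 of \cite{BellecLecueTsybakov2017}. Measured against that argument, your proposal contains a genuine gap, and it sits exactly where you place the ``technical heart''. The claim that $\|\mathbb{X}^T\bxi/n\|_{**}\le C$ with probability at least $1-e^{-t}$ for a constant $C$ \emph{independent of $t$} is false. Already at the level $k=1$, your layered bound requires $\max_j|\bX_j^T\bxi|\le C'\sqrt{n\log(2p)}$; for a fixed design with, say, $\|\bX_1\|_2^2=n$ and Gaussian noise, the event $|\bX_1^T\bxi|>C'\sqrt{n\log(2p)}$ has a fixed positive probability of order $(2p)^{-C'^2/2}$ that does not decrease with $t$, so no $t$-free constant can deliver a $1-e^{-t}$ guarantee once $t$ is large. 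The correct high-probability form of the order-statistics bound carries an extra additive term of order $\sqrt{nt}$ per order statistic, which yields only $\|\mathbb{X}^T\bxi/n\|_{**}\le C(1+\sqrt{t})$ and hence $\frac1n|\bxi^T\mathbb{X}\bu|\le C(1+\sqrt{t})\|\bu\|_*$ uniformly in $\bu$. That does not imply the proposition, whose entire point is that the $\sqrt{t}$ factor multiplies $\|\mathbb{X}\bu\|_2/n$ (i.e.\ appears in $G(\bu)$) and not $\|\bu\|_*$. Your proposed repair --- intersecting this with the pointwise sub-Gaussian bound --- is not available either: the event on which $\frac1n|\bxi^T\mathbb{X}\bu|\le CG(\bu)$ holds depends on the fixed $\bu$, whereas the proposition must hold on a single event simultaneously for all $\bu$ (it is applied in the proof of Proposition~\ref{prop_sigma_sqs} to the random vector $\bu=\hat{\bt}_{\sf srs}-\bt$), and one cannot take a union over uncountably many $\bu$-dependent events.

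What Theorem~9.1 of \cite{BellecLecueTsybakov2017} actually does is couple the two regimes rather than prove them separately. By homogeneity one controls $\sup_{\bu\in U}\bxi^T\mathbb{X}\bu/n$ over the single set $U=\{\bu:\|\bu\|_*\le 1,\ \|\mathbb{X}\bu\|_2\le n/(1+\sqrt{t})\}$ by its \emph{expectation} plus a deviation term: the expectation is bounded by an absolute constant via the sorted-$\ell_1$ duality and expected order statistics of $(\bX_j^T\bxi)_j$ (this is where $\max_j\|\bX_j\|_2^2\le 2n$ and the calibration of $\l_j$ enter, and it involves no $t$), while the deviation of the supremum about its mean is of size $\sqrt{t}$ times the Lipschitz constant $\sup_{\bu\in U}\|\mathbb{X}\bu\|_2/n\le(1+\sqrt{t})^{-1}$, hence bounded by an absolute constant as well. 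This is Borell--TIS in the Gaussian case and a chaining/Bernstein-type argument under the sub-Gaussian Condition~\ref{1}. That concentration-of-the-supremum step, which charges the $t$-dependence to $\|\mathbb{X}\bu\|_2$ and only the mean to $\|\bu\|_*$, is the idea missing from your decomposition.
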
 
It is not hard to check that the assumption $\max_{1\le i \le p} \|\bX_i\|_2^2\le2n$ of Proposition~\ref{proposition_tool} is satisfied with probability  at least $1-C'\exp(-n/C')$ provided $\bX_i$ are $M$-subGaussian, with components having variance 1,  and Condition~\ref{6} holds.
It follows that there is an event with probability at least $1-C'\exp (-(n\wedge s\log(ep/s))/C' )$ such that the conclusions of Theorem~6.1 in~\cite{Derumigny2017}  (to within the constants that can depend only on $L$, 
$M$ and $\k_0$) hold under the assumptions of Proposition~\ref{proposition_derumigny}.
 
\subsection{Proof of Proposition~\ref{prop_sigma_sqs}}

For brevity,  in this proof we write $(\XX,\bY)$ instead of $(\XX_1,\bY_1)$. Using the definition of $\hat{\s}_{\sf srs}^2$ in~\eqref{definition_sigmaSRS}  we get
\begin{equation}\label{develop}
	|\hat{\s}_{\sf srs}^2-\s^2| \le \frac1{n}\|\mathbb{X}(\hat{\bt}_{\sf srs}-\bt)\|_2^2 + \frac{\s^2}{n} \Big|\|\bxi\|_2^2-n\Big| + \frac{2\s}{n}\Big|\bxi^T\mathbb{X}(\hat{\bt}_{\sf srs}-\bt)\Big|.
\end{equation}
To control the second term on the right-hand side of \eqref{develop}, we apply Bernstein's inequality (\cf e.g. Theorem 2.8.2 in \cite{vershyninbook}): if Condition~\ref{1} holds, then
\begin{equation}\label{chi2}
	\prob\Big(\big|\|\bxi\|_2^2-n\big|>u\Big) \le 2e^{-c_0\big(\frac{u^2}{n}\wedge u\big)}, \quad \forall u>0,
\end{equation}
where $c_0>0$ depends only on $L$.
This yields, for any $t>0$ such that $t \le c_0n$,
\begin{equation}\label{eqq1}
	\big|\|\bxi\|_2^2-n\big| \le \sqrt{nt/c_0}
\end{equation}
with probability at least $1-2e^{-t}$. Next, to bound the first and the third terms on the right-hand side of~\eqref{develop} we place ourselves on the event of probability at least $1-C'\exp (-(n\wedge s\log(ep/s))/C' )$ where the result of  Proposition~\ref{proposition_derumigny} holds. We denote this event by ${\cal C}$. By Proposition~\ref{proposition_derumigny}, on ${\cal C}$ we have 
$\frac1{n}\|\mathbb{X}(\hat{\bt}_{\sf srs}-\bt)\|_2^2\le C' \s^2\frac{s \log(ep/s)}{n}$. 

Finally, to bound the third term on the right-hand side of \eqref{develop} we use Proposition~\ref{proposition_tool} where we set $\bu=\hat{\bt}_{\sf srs}-\bt$. For $t>0$, denote by ${\cal C}'_t$ the intersection of ${\cal C}$ with the event of probability at least $1-e^{-t}$, on which Proposition~\ref{proposition_tool} holds. Then $\prob({\cal C}'_t) \ge 1-C'\exp (-(n\wedge t \wedge s\log(ep/s))/C' )$.  Proposition~\ref{proposition_derumigny} yields that, for any $t>0$ satisfying $t\le c_0n$ we have, on the event~${\cal C}$,
\begin{align}
	\|\bu\|_* &\le C \s\frac{s\log(ep/s)}{n},\\
	G(\bu) & \le \frac{\s t}n + \frac{\|\mathbb{X}\bu\|_2^2}{\s n} + \frac{\|\mathbb{X}\bu\|_2}{ n} \le C
	\s\Big( \frac{t}n + \frac{s \log(ep/s)}{n}+\frac{(s \log(ep/s))^{1/2}}{n} \Big)\\
	& \le  C' 
	\s\Big( \sqrt{\frac{t}n} + \frac{s \log(ep/s)}{n}\Big),
\end{align}
 where we have used the fact that $s \log(ep/s)\ge  1$.
These remarks and Proposition~\ref{proposition_tool} imply that,  for any $t>0$ satisfying $t\le c_0n$ we have, on the event~${\cal C}'_t$,
\begin{equation}\label{eqq3}
	\frac1n |\bxi^T \mathbb{X}\bu| \le C \s \Big(\frac{t}n + C' \frac{ s \log(ep/s)}{n}\Big).
\end{equation}
The result of the proposition follows.
 
\subsection{Proof of Theorem~\ref{theo:pgen:dense}} 


For brevity,  in this subsection we write $(\XX,\bY)$, $\hat Q$ and $\hat \L$ instead of $(\XX_2,\bY_2)$, $\hat Q_D^{HD}$ and $\hat \L_D^{HD}$, respectively. We also set $\bu = \hat \bt_{\sf srs} - \bt$ and $\bw_j=\sum_{k\neq j} u_k\bX_{k}$. 

Recalling the expressions obtained in the proof of Theorem~\ref{theo:general}, we have $\hat{Q} = f_1(\bZ) + f_2(\bZ)$, where $\bZ=\{ (X_{ij})_{1\le i\le n, 1\le j\le p}, (\xi_i)_{1\le i\le n}\}$ and
\begin{align}
	f_1(\bZ) &= Q(\bt) + \sum_{j=1}^p \Big[ 2\t_ju_j\Big(1-\frac{\|\bX_{j}\|_2^2}{n} \Big)+2\t_j \frac{\bX_{j}^T}{n}(\s\bxi-\bw_j)\Big] + \frac{2}{n} \sum_{j=1}^p  u_j\sum_{i=1}^n X_{ij}(\s \xi_i - \bX^i\bu), \\
	f_2(\bZ) &= \sum_{i=1}^p u_i^2 + \frac1{n(n-1)} \sum_{i\neq j} \sum_{k=1}^p X_{ik}X_{jk}(\s \xi_{i}-\mathbf{X}^i\bu)(\s \xi_j-\mathbf{X}^j\bu).
\end{align}
We will use a result from~\cite{2015_adamszak}. To this end, we introduce the following notation. Denote by $\mathcal P_d$ the set of all partitions of $\{1,\ldots,d\}$  into nonempty, pairwise disjoint sets. For a partition $\mathcal I = \{\mathcal I_1,\dots, \mathcal I_K\}\in \mathcal P_d$ and
a multi-indexed array $\mathfrak{M} = (M_{i_1,\ldots,i_d})_{i_1,\ldots,i_d=1,\ldots,q}$, we define 
\begin{equation}
	\|\mathfrak{M}\|_{\mathcal{I}} := \max\Big\{ \sum_{i_1,\ldots,i_d=1}^q M_{i_1,\ldots,i_d} \prod_{k=1}^K x_{{\bf i}_k} : \, \|x_{{\bf i}_k}\|_2  \le 1, \, k=1,\dots, K \Big\},
\end{equation}
where $x_{{\bf i}_k}$ 
is a $\card(\mathcal{I}_k)$-dimensional array indexed by ${\bf i}_k=(i_j, j\in \mathcal{I}_k)$. Here,  $\card(\mathcal{I}_k)$ is the cardinality of $\mathcal{I}_k$ and $\|x_{{\bf i}_k}\|_2^2= \sum_{|{\bf i}_k|\le q} x_{{\bf i}_k}^2$ with $|{\bf i}_k| = \max(i_j, j\in \mathcal{I}_k)$. 
 Examples of this notion can be found in~\cite{2015_adamszak}. 
Finally, denote by ${\mathbb D}^d f$ the $d$-th derivative of a function $f:\RR^q\to \RR$, which we identify with a multi-indexed array $\mathfrak{M}$, where only entries with indices satisfying $i_1+\cdots + i_d= d$ (that are the corresponding partial derivatives) can be non-zero, and all other entries vanish.

\begin{proposition}[Theorem 1.4 in \cite{2015_adamszak}]\label{Adamczak-Wolff}
	Let $f:\RR^q\to \RR$ be a polynomial of $q$ variables of degree $D$. Let $\bZ=(Z_1,\ldots, Z_q)$ be a vector with independent $L$-subGaussian components. Then, for any $t>0$, 
	\begin{equation}
		\prob\big(|f(\bZ)-\esp f(\bZ)|\ge t\big)\le 2\exp\Big[-c_D \min_{1\le d\le D} \min_{{\cal I}\in {\mathcal P}_d}\Big(\frac{t}{L^d\|\esp {\mathbb D}^d f(\bZ)\|_{\cal I}}\Big)^{\frac{2}{{\rm \bf Card}(\cal I)}}\Big],
	\end{equation} 
	where $c_D$ is a positive constant depending only on $D$. 
\end{proposition}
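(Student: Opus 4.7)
The strategy is to reduce the tail bound to moment estimates for multilinear forms in independent subGaussian variables and then invoke a Latala-type chaos inequality. I would begin by recording that, since adding a constant changes neither side, we may assume $\esp f(\bZ)=0$. Next I would perform a Hoeffding-type orthogonal decomposition of $f$ into homogeneous chaos components of orders $d=1,\ldots,D$: writing
\begin{equation}
f(\bZ) \;=\; \sum_{d=0}^{D} f_d(\bZ),\qquad f_d(\bZ)\;=\;\sum_{|\alpha|=d}\frac{1}{\alpha!}\,\bigl(\esp \mathbb D^\alpha f(\bZ)\bigr)\,\prod_{i}H_{\alpha_i}(Z_i),
\end{equation}
where $H_k$ is a centered polynomial of degree $k$ adapted to the law of $Z_i$ (in the Gaussian case one uses Hermite polynomials; for subGaussian one uses the associated Appell-type basis). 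By orthogonality the components are uncorrelated, so it is enough to bound each $|f_d(\bZ)|$ separately and then union-bound.

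For a fixed order $d$ I would apply a decoupling inequality (de la Pe\~na--Montgomery-Smith type) to replace each homogeneous chaos $f_d(\bZ)$ by a tetrahedral multilinear form in $d$ independent copies $\bZ^{(1)},\ldots,\bZ^{(d)}$, up to universal constants. The resulting object has the shape
\begin{equation}
T_d\;=\;\sum_{i_1,\ldots,i_d} \bigl[\esp \mathbb D^d f(\bZ)\bigr]_{i_1,\ldots,i_d}\, Z^{(1)}_{i_1}\cdots Z^{(d)}_{i_d}.
\end{equation}
For such a form one has moment estimates of Latala's type: for every real $q\ge 2$,
\begin{equation}
\|T_d\|_{L_q}\;\le\; C_d L^d \sum_{\mathcal I\in\mathcal P_d} q^{\,\card(\mathcal I)/2}\,\|\esp\mathbb D^d f(\bZ)\|_{\mathcal I}.
\end{equation}
In the Gaussian case this is precisely Latala's theorem on Gaussian chaos (Ann.\ Probab.\ 2006); for independent $L$-subGaussian coordinates the same estimate transfers via standard comparison arguments (e.g., a Bernoulli conditioning plus Gaussianization, or hypercontractivity combined with a symmetrization step), which is exactly the kernel of Adamczak--Wolff's argument.

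Finally I would turn the moment bounds into tail bounds. Applying Markov's inequality at level $t/D$ with the optimal $q$ chosen for each partition yields, for each $d$ and each $\mathcal I\in\mathcal P_d$, a bound of the form
\begin{equation}
\prob\!\left(|f_d(\bZ)|\ge \frac{t}{D}\right)\;\le\; 2\exp\!\left[-c_D\Bigl(\tfrac{t}{L^d\|\esp\mathbb D^d f(\bZ)\|_{\mathcal I}}\Bigr)^{2/\card(\mathcal I)}\right],
\end{equation}
and a union bound over $d$ and $\mathcal I$ delivers the claimed inequality with a possibly smaller constant $c_D$. The main obstacle, and the genuinely hard technical heart of the proof, is the Latala-type moment inequality for multilinear forms together with its extension from Gaussians to $L$-subGaussians: the combinatorics over partitions $\mathcal P_d$ must match the chaining that generates the $\|\cdot\|_{\mathcal I}$ norms, and one has to keep careful track of dependencies of the constants on $d$ (hence on $D$) without introducing dimension-dependent losses. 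Everything else (Hoeffding decomposition, decoupling, Markov's inequality) is standard once this central estimate is in hand.
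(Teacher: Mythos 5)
This proposition is not proved in the paper at all: it is imported verbatim as Theorem 1.4 of Adamczak and Wolff (2015), so there is no internal argument to compare yours against. Judged on its own merits, your sketch correctly identifies the architecture of the original proof: reduce to homogeneous multilinear parts, decouple, apply Latala-type moment estimates for chaoses, and convert moments to tails via Markov's inequality with an optimized moment order, finishing with a union bound over $d$ and ${\cal I}\in{\cal P}_d$ (which only costs a constant absorbed into $c_D$, since the exponent already carries the minimum over $d$ and ${\cal I}$ and the number of partitions depends only on $D$).

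Two points, however, keep this from being a proof. First, and most importantly, you explicitly defer the central estimate: the Latala-type moment inequality $\|T_d\|_{L_q}\le C_d L^d\sum_{{\cal I}\in{\cal P}_d}q^{\card({\cal I})/2}\|\esp{\mathbb D}^d f(\bZ)\|_{\cal I}$ and its transfer from Gaussian to general $L$-subGaussian coordinates. That transfer is not a "standard comparison argument": for orders $d\ge 3$ Latala's Gaussian bounds are themselves deep, and the subGaussian upper bound is precisely the main technical content of Adamczak and Wolff's paper, obtained by an induction on the degree together with nontrivial entropy estimates, not by a one-line Gaussianization or hypercontractivity step. Second, your Hoeffding-type decomposition with coefficients $\esp{\mathbb D}^\alpha f(\bZ)$ attached to an "Appell-type basis" rests on a Hermite-specific identity (the Gaussian integration-by-parts formula giving $\esp[f\,H_\alpha]=\esp[{\mathbb D}^\alpha f]$); for a general subGaussian law the orthogonal-polynomial coefficients are not the expected derivatives, so the displayed decomposition is false as written, and the reduction to multilinear forms with kernels $\esp{\mathbb D}^d f(\bZ)$ must be organized differently (Adamczak and Wolff work by induction on $D$ directly at the level of moments of $f(\bZ)-\esp f(\bZ)$). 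In short, the roadmap is the right one, but the load-bearing steps are asserted rather than proved, and one of the asserted intermediate identities does not hold outside the Gaussian case.
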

  
We now apply this proposition with $f=\hat{Q}$, which is a polynomial of degree $D=4$ (assuming $\bu$ fixed) of $q=np + n$ variables. Note that $\esp_{\bt,\s} ({\mathbb D}\hat{Q}\suchthat \bu) = \esp_{\bt,\s} ({\mathbb D}^3\hat{Q}\suchthat \bu) = 0$, and $\esp_{\bt,\s} ({\mathbb D}^4f_1(\bZ)\suchthat \bu) = 0$. Hence, in order to apply Proposition \ref{Adamczak-Wolff}, we only need to evaluate $\|\esp_{\bt,\s} ({\mathbb D}^2f_i(\bZ)\suchthat \bu)\|_{\mathcal I}$, $i=1,2$,  for every $\mathcal I \in \mathcal P_2$ and $\|\esp_{\bt,\s} ({\mathbb D}^4f_2(\bZ)\suchthat \bu)\|_{\mathcal I}$ for every $\mathcal I \in \mathcal P_4$.
  
We have
\begin{align}\label{eq:diff2}
	&\frac{\partial^2 f_1}{\partial \xi_i \partial \xi_j}(\bZ) = 0, \quad \frac{\partial^2 f_1}{\partial \xi_i \partial X_{jk}}(\bZ) = \frac{2\s}{n} (u_k+\t_k) \fcar_{i=j}, \\
	&\frac{\partial^2 f_1}{\partial X_{ij} \partial X_{kl}}(\bZ) =  -\frac{2}{n} \big[ u_l(u_j+\t_j) + u_j(u_l+\t_l)\big] \fcar_{i=k}.
\end{align}
Since $\|\cdot\|_{\{1, 2\}}$ is simply the Euclidean norm of the array (\cf\cite{2015_adamszak}) we find
\begin{equation}\label{theo:pgen:dense1}
	\|\esp_{\bt,\s} \big({\mathbb D}^2f_1(\bZ)|\bu\big)\|^2_{\{1, 2\}}\le C \frac{\big(\|\bu\|_2^2+\s^2\big)\big(\|\bu\|_2^2+\|\bt\|_2^2\big)}{n}.
\end{equation}
Furthermore, 
\begin{align}
	\|\esp_{\bt,\s} \big({\mathbb D}^2f_1(\bZ)&|\bu\big)\|_{\{\{1\}, \{2\}\}} \\ 
	&= \max \Big\{ \frac{2\s}{n} \sum_{i=1}^n \sum_{k=1}^p (u_k+\t_k) x_{i,k} \bar y_{i} - \frac{2}{n} \sum_{i=1}^n \sum_{j,l=1}^p \big[ u_l(u_j+\t_j) + u_j(u_l+\t_l)\big] x_{i,j} y_{i,l} \\
	&+ \frac{2\s}{n} \sum_{i=1}^n \sum_{k=1}^p (u_k+\t_k) y_{i,k} \bar x_{i} - \frac{2}{n} \sum_{i=1}^n \sum_{j,l=1}^p \big[ u_l(u_j+\t_j) + u_j(u_l+\t_l)\big] y_{i,j} x_{i,l}\Big\},
\end{align}
where the maximum is taken over $(x_{i,j}, \bar x_k, y_{i,j},   \bar y_{k})$ such that $\sum_{i=1}^n \sum_{j=1}^p x^2_{i,j} + \sum_{k=1}^n \bar x^2_{k} \le 1$ and $\sum_{i=1}^n \sum_{j=1}^p y^2_{i,j} + \sum_{k=1}^n \bar y^2_{k} \le 1$. By the Cauchy-Schwarz inequality,
\begin{equation}
	\Big| \frac{2\s}{n} \sum_{i=1}^n \sum_{k=1}^p (u_k+\t_k) x_{i,k} \bar y_{i} \Big| \le \frac{2\s}{n} \big(\|\bu\|_2+\|\bt\|_2\big), 
\end{equation}
and on the other hand, 
\begin{align}
	\Big| \frac{2}{n} \sum_{i=1}^n \sum_{j,l=1}^p \big[ u_l(u_j+\t_j) + u_j(u_l+\t_l)\big] x_{i,j} y_{i,l} \Big| &\le \frac4{n} \sqrt{\sum_{i,j,l} u_l^2 x_{i,j}^2} \sqrt{\sum_{i,j,l} (u_j + \t_j)^2 y_{i,l}^2} \\
	&\le \frac{C}{n} \|\bu\|_2 \big( \|\bu\|_2 + \|\bt\|_2 \big).
\end{align}
Hence $\|\esp_{\bt,\s} \big( {\mathbb D}^2 f_1(\bZ) \suchthat \bu\big) \|_{\{\{1\},\{2\}\}} \le Cn^{-1} \big(\|\bu\|_2+\s\big)\big(\|\bu\|_2+\|\bt\|_2\big).$ 

We now turn to 
evaluation of $\|\esp_{\bt,\s} ({\mathbb D}^2f_2(\bZ)\suchthat \bu)\|_{\mathcal I}$ for $\mathcal I \in \mathcal P_2$ and $\|\esp_{\bt,\s} ({\mathbb D}^4f_2(\bZ)\suchthat \bu)\|_{\mathcal I}$ for $\mathcal I \in \mathcal P_4$. We have
\begin{align}\label{dec:f2} 
	f_2(\bZ) - \sum_{i=1}^p u_i^2 &= S_1(\bZ) + S_2(\bZ)+ S_3(\bZ)   
	\end{align}
	where
	\begin{align}
	S_1(\bZ)&=\frac{\s^2}{n(n-1)} \sum_{i,j} \sum_{k_1,l_1} \sum_{k_2,l_2} \big[\fcar_{i \neq j, k_1 = i, k_2 = j, l_1 = l_2}\big] \xi_i\xi_j X_{k_1 l_1} X_{k_2 l_2} \\
	 S_2(\bZ)&=- \frac{2\s}{n(n-1)} \sum_i \sum_{k_1,l_1} \sum_{k_2,l_2} \sum_{k_3,l_3} \big[u_{l_3} \fcar_{k_1 = i, k_2 = k_3, k_2 \neq i, l_1 = l_2}\big] \xi_i X_{k_1 l_1} X_{k_2 l_2} X_{k_3 l_3} \\
	S_3(\bZ) &= \frac{1}{n(n-1)} \sum_{k_1,l_1} \sum_{k_2,l_2} \sum_{k_3,l_3} \sum_{k_4,l_4} \big[u_{l_3} u_{l_4} \fcar_{k_1 = k_3, k_2 = k_4, k_1 \neq k_2, l_1 = l_2} \big] X_{k_1 l_1} X_{k_2 l_2} X_{k_3 l_3} X_{k_4 l_4}.
\end{align}
It follows that
\begin{align}
	&\esp_{\bt,\s} \Big[ \frac{\partial^2 f_2}{\partial \xi_i \partial \xi_j}(\bZ) \suchthat \bu \Big] = 0, \quad \esp_{\bt,\s} \Big[ \frac{\partial^2 f_2}{\partial \xi_i \partial X_{k,l}}(\bZ) \suchthat \bu \Big] = -\frac{2\s u_l}{n} \fcar_{i = k}, \\
	&\esp_{\bt,\s} \Big[ \frac{\partial^2 f_2}{\partial X_{k_1,l_1}\partial X_{k_2,l_2}}(\bZ) \suchthat \bu \Big] = \frac{2u_{l_1}u_{l_2}}{n} \fcar_{k_1 = k_2}. 
\end{align}
By replacing the $u_j + \t_j$ in~\eqref{eq:diff2} with $u_k$, similar argument as for $f_1$ yields $\|\esp_{\bt,\s} \big({\mathbb D}^2f_2(\bZ)|\bu\big)\|_{\{1, 2\}}^2\le Cn^{-1} \big(\|\bu\|_2^2+\s^2\big)\|\bu\|_2^2$ and $\|\esp_{\bt,\s} \big( {\mathbb D}^2 f_2(\bZ) \suchthat \bu\big) \|_{\{\{1\},\{2\}\}} \le Cn^{-1} \big(\|\bu\|_2+\s\big)\|\bu\|_2.$

Furthermore, retrieving the fourth order derivatives of $f_2$ from~\eqref{dec:f2} and recalling that $\|\cdot\|_{\{1,2,3,4\}}$ is the Euclidean norm we find
\begin{equation}
		\|\esp_{\bt,\s} \big( {\mathbb D}^4\hat{Q} \suchthat \bu\big) \|^2_{\{1,2,3,4\}} = \|\esp_{\bt,\s} \big( {\mathbb D}^4f_2(\bZ) \suchthat \bu\big) \|^2_{\{1,2,3,4\}} \le \big(\|\bu\|_2^4+\s^4 \big)\frac{p}{n^2}.
\end{equation}
Next,  for any partition $\mathcal{I}\in\mathcal{P}_4$,
\begin{equation}\label{dec:norm:f2}
	\|\esp_{\bt,\s} \big( {\mathbb D}^4 f_2(\bZ) \suchthat \bu\big) \|_{\mathcal{I}} \le \sum_{i=1}^3 \|\esp_{\bt,\s} \big( {\mathbb D}^4 S_i(\bZ) \suchthat \bu\big) \|_{\mathcal{I}}.
\end{equation}
We now evaluate separately the three terms on the right hand side of this inequality. We will only do it for the partition 
$\mathcal{I}=\{\{1\},\{2,3,4\}\}$ since other partitions are treated analogously. First, consider the term with $S_1(\bZ)$ in \eqref{dec:norm:f2}. 
Observe that
\begin{align}\label{eq:diff3}
\|\esp_{\bt,\s} \big( {\mathbb D}^4 S_1(\bZ) \suchthat \bu\big) \|_{\{\{1\},\{2,3,4\}\}} \leq  \frac{C\s^2}{n(n-1)} \max \Big\{\sum_{i\neq j} \sum_{l = 1}^p  x_i y_{i,j,l} + \sum_{i\neq j} \sum_{l = 1}^p x_{i,l} y_{i,j,l}\Big\},
\end{align}
where the maximum is taken under the constraints imposed by the definition of the norm $ \|\cdot\|_{\mathcal{I}}$. Thus, for the first double sum on the right hand side of~\eqref{eq:diff3}, the maximum is taken under the constraint $\sum_{i=1}^n x_i^2 \le 1, \sum_{i\neq j} \sum_{l=1}^p y_{i,j,l}^2 \le 1$. Using the implication
\begin{align}
	\sum_{i=1}^n x_i^2 \le 1, \sum_{i\neq j} \sum_{l=1}^p y_{i,j,l}^2 \le 1 \quad\donc\quad \sum_{i\neq j} \sum_{l=1}^p x_i y_{i,j,l} \le \sqrt{np}
\end{align}
to control the first double sum and treating quite analogously the second double sum in \eqref{eq:diff3}  we find that 
$\|\esp_{\bt,\s} \big( {\mathbb D}^4 S_1(\bZ) \suchthat \bu\big) \|_{\{\{1\},\{2,3,4\}\}} \le C\s^2\sqrt{p}n^{-3/2}$. 

 Next, we consider the term with $S_2(\bZ)$ in \eqref{dec:norm:f2}. Similarly to~\eqref{eq:diff3} we have
 \begin{align}\label{eq:diff4}
 \|\esp_{\bt,\s} & \big( {\mathbb D}^4 S_2(\bZ) \suchthat \bu\big) \|_{\{\{1\},\{2,3,4\}\}} \le \frac{C\s}{n(n-1)} \max \Big\{\sum_{i\neq j} \sum_{l, l' = 1}^p x_i y_{i,j,l,l'} u_{l'} \\
 & +  \sum_{i\neq j} \sum_{l, l' = 1}^p x_{i,l} y_{i,j,l,l'} u_{l'}  +  \sum_{i\neq j} \sum_{l, l' = 1}^p x_{i,l'} y_{i,j,l} u_{l'} \Big\},
 \end{align} 
 where the maximum is taken under the constraints imposed by the definition of the norm $ \|\cdot\|_{\mathcal{I}}$. For the constraint corresponding to the first triple sum on the  right hand side of~\eqref{eq:diff4} we have the implication
\begin{align}
	\sum_{i=1}^n x_i^2 \le 1, \sum_{i\neq j} \sum_{l,l'=1}^p y_{i,j,l,l'}^2 \le 1 \quad\donc\quad \sum_{i\neq j} \sum_{l=1}^p x_i y_{i,j,l,l'} u_{l'} \le \sqrt{\sum_{i,j,l,l'} u_{l'}^2x_i^2 \sum_{i,j,l,l'} y_{i,j,l,l'}^2} \le \|\bu\|_2 \sqrt{np}.
\end{align}
The other terms on the right hand side of~\eqref{eq:diff4} are treated analogously. It follows that 
\[
\|\esp_{\bt,\s} \big( {\mathbb D}^4 S_2(\bZ) \suchthat \bu\big) \|_{\{\{1\},\{2,3,4\}\}} \le C\s\|\bu\|_2\sqrt{p}n^{-3/2}.
\]
Similarly, such a bound holds for every partition in $\mathcal{P}_4$. Finally,  the control of the term with $S_3(\bZ)$ in \eqref{dec:norm:f2} follows the same lines by using the implication
\begin{align}
	\sum_{i,a} x_{i,a}^2 \le 1, \sum_{i\neq j} \sum_{a,b,c} y_{i,j,a,b,c}^2 \le 1 \quad\donc\quad \sum_{i,j,a,b,c} x_{i,a} y_{i,j,a,b,c} u_{b}u_{c} &\le \sqrt{\sum_{i,j,a,b,c} u_{b}^2x_{i,a}^2 \sum_{i,j,a,b,c} u^2_c y_{i,j,a,b,c}^2} \\
	&\le \|\bu\|_2^2\sqrt{np},
\end{align}
and analogous implications to obtain the bound $\|\esp_{\bt,\s} \big( {\mathbb D}^4 S_3(\bZ) \suchthat \bu\big) \|_{\{\{1\},\{2,3,4\}\}} \le C\|\bu\|_2^2\sqrt{p}n^{-3/2}$. Again, such a bound holds for every partition in $\mathcal{P}_4$ (we skip the argument, which is quite analogous). 

Using the bounds obtained above for the three terms in~\eqref{dec:norm:f2} we get
\begin{equation}
	\max_{\mathcal{I} \in \mathcal{P}_4} \|\esp_{\bt,\s} \big( {\mathbb D}^4 \hat{Q} \suchthat \bu\big) \|_{\mathcal{I}} \le C\big(\|\bu\|_2^2+\s^2\big)\sqrt{p}n^{-3/2}.
\end{equation}
Using Proposition~\ref{Adamczak-Wolff} and taking $t=\frac{\s^2\sqrt{pv}}{n}+\s\sqrt{\frac{v}{n}}\|\bt\|_2$ with $0<v\le n^{1/3}$ we have on the event $\big\{ \|\bu\|_2^2 \le c_{\sf srs, 2} \sigma^2 s\log(ep/s)/n \big\}$ and under Condition~\ref{6} that
\begin{align}
	\min_{1\le d\le D} \min_{{\cal I}\in {\cal P}_d}\Big(\frac{t}{L^d\|\esp {\mathbb D}^d f(\bZ)\|_{\cal I}}\Big)^{\frac{2}{{\rm \bf Card}(\cal I)}} &\ge C \Big[\frac{nt^2}{\big(\|\bu\|_2^2+\s^2\big)\big(\|\bu\|_2^2+\|\bt\|_2^2\big)} \wedge \frac{nt}{\big(\|\bu\|_2+\|\bt\|_2\big)\big(\s+\|\bu\|_2\big)} \\ 
	&\wedge \frac{n^2t^2}{\big(\|\bu\|_2^4+\s^4\big) p} \wedge \min_{k=2,3,4}\Big\{\frac{n^{3/2}t}{\big(\|\bu\|_2^2+\s^2\big)\sqrt{p}}\Big\}^{2/k} \Big] \\
	&\ge Cv.   \phantom{\frac{n^2t^2}{\big(\|\bu\|_2^4+\s^4\big) p}}
\end{align}
Thus, the first assertion of the theorem follows from Propositions~\ref{proposition_derumigny} and~\ref{Adamczak-Wolff}, and the fact that the estimator $\hat{Q}$ is conditionally unbiased: $\esp_{\bt,\s} \big(\hat{Q} \suchthat \bu\big) = Q(\bt)$. 

{We now prove the second assertion in the theorem. Set $\tau^2 = \sigma^2 \frac{\sqrt{p v}}{n}$. We bound $|\hat{\L} - \|\bt\|_2|$ by considering the cases $\| \bt \|_2 \leq \tau$ and $\| \bt \|_2 > \tau$ separately.}

{
Case I: $\| \bt \|_2 \leq \tau$. Then we have
\begin{equation}
|\hat{\L} - \|\bt\|_2|^2 \leq 2 (\hat{\L}^2 + \|\bt\|_2^2) = 2 (\hat{Q} - \|\bt\|_2^2) + 4 \|\bt\|_2^2.
\end{equation}
By combining this inequality with the first assertion of the theorem we get that, with probability at least $1-C'\big[e^{-v}+e^{-\frac{n\wedge s\log(ep/s)}{C'}}\big]$, 
\begin{align}\label{eq:ntau1}
|\hat{\L} - \|\bt\|_2|^2 & \leq C \Big(\sqrt{\s^4 \frac{pv}{n^2} + \s^2 \frac{\tau^2 v}{n}} + \tau^2\Big) \\
& \leq C(\s^2 \frac{\sqrt{pv}}{n} + \s \frac{\tau \sqrt{v}}{\sqrt{n}} + \tau^2) \leq C(\s^2 \frac{\sqrt{pv}}{n} + \s^2 \frac{p^{1/4} v^{3/4}}{n}).
\end{align}

{Case II: $\| \bt \|_2 > \tau$. Applying the inequality $\forall a >0, b \geq 0, (a - b)^2 \leq (a^2 - b^2)^2 / a^2$ we find
\begin{align}
|\hat{\L} - \|\bt\|_2|^2 & \leq \frac{(\hat{Q} - \|\bt\|_2^2)^2}{\|\bt\|_2^2}.
\end{align}
Combining this inequality with the first assertion of the theorem we obtain  that, with probability at least $1-C'\big[e^{-v}+e^{-\frac{n\wedge s\log(ep/s)}{C'}}\big]$, 
\begin{align}\label{eq:ntau2}
|\hat{\L} - \|\bt\|_2|^2 & \leq C \frac{\s^4 \frac{pv}{n^2} + \s^2 \frac{\|\bt\|_2^2 v}{n}}{\|\bt\|_2^2} \leq C (\s^4 \frac{pv}{\tau^2 n^2} + \s^2 \frac{v}{n}) \leq C (\s^2 \frac{\sqrt{pv}}{n} + \s^2 \frac{v}{n}).
\end{align}
Putting~\eqref{eq:ntau1} and~\eqref{eq:ntau2} together we find
\begin{equation}
	\prob_{\bt,\s}\Big(|\hat{\L}-\|\bt\|_2|>C\frac{\s(pv)^{1/4}}{\sqrt{n}}+C\s\sqrt{\frac{v}{n}} + C \s \frac{v^{3/8}p^{1/8}}{\sqrt{n}}\Big)\le C'(e^{-v/C'} + (\frac{s}{ep})^{s/C'}).
\end{equation}
Finally, since we assume that $v \leq n^{1/3}$ and $p \geq \gamma n$ for some $0 <\gamma < 1$, the term $\frac{\s(pv)^{1/4}}{\sqrt{n}}$ in the above inequality dominates the other two terms. The second assertion of the theorem follows.


\subsection{Proof of Theorem~\ref{theo:pgen:sparse12}}

For brevity,  in this subsection we write $\hat \bt, \tilde \bt, \hat{\s}$ and $\hat Q$ instead of $\hat{\bt}_{\sf srs}, \tilde{\bt}_{\sf srs}, \sqrt2 \hat{\s}_{\sf srs}$ and $\hat Q_S^{HD}$,  respectively. We also set $\bu = \hat \bt - \bt$ and
\begin{equation}\label{def_lem4}
	\bnu = \Big(\mathbb I_p - \frac{\XX_3^T\XX_3}{n}\Big) \bu, \ \beps = \frac{\s}{n}\XX_3^T \bxi_3, \ \hat{\tau}=\alpha\hat{\s} \sqrt{ \frac{\log(1+p/s^2)}{n}}, \ \tau=\alpha\s \sqrt{\frac{ \log(1+p/s^2)}{n}}.
\end{equation}
With this notation, we have $\tilde{\bt}=\bt+\bnu+\beps$. We define the random event
\begin{equation}\label{def_calA}
	\calA = \Big\{ \|\hat{\bt}-\bt\|_2^2 \le \s^2\Big\}\cap
		\Big\{\s^2\le \hat{\s}^2\le 3 \s^2\Big\}.
\end{equation}

\subsubsection{Preliminary lemmas for the proof of Theorem~\ref{theo:pgen:sparse12}}
          
\begin{lemma}\label{eps}
	Let Conditions~\ref{1},~\ref{5} and~\ref{6}  hold. Then  there exist positive constants $C_1,C_2$ depending only on $L,M$ such that for $\a>0$ large enough and any $j\in \{1,\ldots, p\}$,
	\begin{equation}
		\prob_{\bt,\s} \Big(\big\{|\e_j|>\hat{\tau}/2 \big\} \cap \calA \Big) \le C_1 \frac{s^2}{p}
	\end{equation}
     and
     \begin{equation}
     	\t_j^4\prob_{\bt,\s} \big(|\e_j| > |\t_j|/4\big) \le C_2 \frac{\s^4}{n^2},
     \end{equation}     
     where $\beps, \htau$ and $\calA$ are defined in~\eqref{def_lem4} and~\eqref{def_calA} respectively.
\end{lemma}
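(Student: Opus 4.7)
The starting point is to observe that $\e_j = \frac{\s}{n}\sum_{i=1}^n X_{3,ij}\xi_{3,i}$ is a sum of $n$ independent zero-mean random variables. Under Conditions~\ref{1} and~\ref{5}, each summand is a product of independent subGaussian variables, hence is sub-exponential with norm bounded by a constant depending only on $L$ and $M$. Bernstein's inequality then delivers a constant $c>0$ depending only on $L,M$ such that, for every $t>0$,
\begin{equation}\label{eq:lemeps:bern}
	\prob_{\bt,\s}\big(|\e_j|>t\big)\le 2\exp\!\Big(-c\min\Big(\tfrac{nt^2}{\s^2},\tfrac{nt}{\s}\Big)\Big).
\end{equation}
Both inequalities of the lemma will be derived from this single tail bound.

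For the first inequality, I would use that on $\calA$ we have $\hat{\s}^2\ge \s^2$, hence $\hat{\tau}\ge \tau$, and therefore $\{|\e_j|>\hat{\tau}/2\}\cap\calA \subseteq \{|\e_j|>\tau/2\}$. Applying~\eqref{eq:lemeps:bern} with $t=\tau/2=\frac{\alpha\s}{2}\sqrt{\log(1+p/s^2)/n}$ and invoking Condition~\ref{6} to get $\log(1+p/s^2)\le C\kappa_0 n$ places us in the subGaussian regime of Bernstein's inequality and yields
\[
\prob_{\bt,\s}\big(|\e_j|>\tau/2\big)\le 2(1+p/s^2)^{-c\alpha^2/4}.
\]
Choosing $\alpha$ large enough that $c\alpha^2/4\ge 1$ and using $(1+p/s^2)^{-1}\le s^2/p$ delivers the first claim.

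For the second inequality I would apply~\eqref{eq:lemeps:bern} with $t=|\t_j|/4$ and split according to whether the quadratic or the linear term governs the minimum. In the regime $|\t_j|\le \s$, the quadratic term dominates, so $\prob_{\bt,\s}(|\e_j|>|\t_j|/4)\le 2e^{-cx/16}$ with $x=n\t_j^2/\s^2$; multiplying by $\t_j^4=\s^4 x^2/n^2$ and using that $x\mapsto x^2 e^{-cx/16}$ is bounded on $\RR_+$ gives the bound $C\s^4/n^2$. In the regime $|\t_j|>\s$, the linear term dominates, so $\prob_{\bt,\s}(|\e_j|>|\t_j|/4)\le 2e^{-cy/4}$ with $y=n|\t_j|/\s$; multiplying by $\t_j^4=\s^4 y^4/n^4$ and using the boundedness of $y\mapsto y^4 e^{-cy/4}$ gives $C\s^4/n^4\le C\s^4/n^2$. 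Neither step presents a real obstacle; the only point that requires care is the quantitative matching between $\alpha$ and $\kappa_0$: one must choose $\alpha$ large enough (depending on $L,M$) to push the exponent past $1$, while simultaneously keeping $\kappa_0$ small enough to ensure that the Bernstein bound stays in its subGaussian regime.
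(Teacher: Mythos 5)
Your proof is correct and follows essentially the same route as the paper: for the second inequality you reproduce the paper's argument verbatim (the summands $X_{3,ij}\xi_{3,i}$ are sub-exponential, Bernstein gives the two-regime tail, and $\t_j^4$ is absorbed by the boundedness of $x\mapsto x^4e^{-c\min(x^2,x)}$), and for the first inequality the only cosmetic difference is that you apply the unconditional sub-exponential Bernstein bound directly, whereas the paper conditions on the column $\bX_{j}$, uses the conditional subGaussianity of $\e_j$ on the event $\{\|\bX_j\|_2^2\le 2n\}$, and controls that event separately. Both variants rest on the same facts, and your closing remark about the interplay between $\a$ and $\k_0$ is handled correctly (and, as you implicitly note, even outside the subGaussian regime the linear term $nt/\s=\tfrac{\a}{2}\sqrt{n\log(1+p/s^2)}\ge\tfrac{\a}{2}\log(1+p/s^2)$ still yields the required bound under Condition \ref{6}).
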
 

\begin{proof}
	For brevity, in this proof we denote by $\bX_j$ the $j$th column of matrix $\XX_3$. 
	Using the definition of $\calA$ we get
	\begin{align}
		\prob_{\bt,\s} \Big(\big\{|\e_j|>\hat{\tau}/2 \big\} \cap \calA \Big) &\le \prob_{\bt,\s}\big(|\e_j|>\tau/2\big) \\
		&\le \prob_{\bt,\s} \big( \|\bX_j\|_2^2 > 2n\big) + \prob_{\bt,\s} \big( \|\bX_j\|_2^2 \le 2n, |\e_j|>\tau/2\big).
	\end{align}
	Since $\xi_1,\ldots, \xi_n$ are independent $L$-subGaussian variables we have that $\e_j$ is a $\s L\|\bX_j\|_2/n$-subGaussian variable for fixed $\bX_j$.  It follows that 
	\begin{equation}
		\prob_{\bt,\s}\big(|\e_j|>\tau/2 \suchthat \bX_j\big)\le 2 \exp\Big(-\frac{n^2\tau^2}{8L^2\s^2\|\bX_j\|_2^2}\Big),
	\end{equation}
	and thus
	\begin{equation}
	\prob_{\bt,\s} \big( \|\bX_j\|_2^2 \le 2n, |\e_j|>\tau/2\big)\le 2 \exp\Big(-\frac{n\tau^2}{16L^2\s^2}\Big)
\le 2s^2/p
	\end{equation}
	if $\alpha>4L$.
	Moreover,  as $X_{1j}, \ldots, X_{nj}$ are independent $M$-subGaussian variables their squares are subExponential. Since also $\esp(X_{ij}^2)=1$  Bernstein's inequality (\cf Theorem 2.8.2 in \cite{vershyninbook}) yields that there is a constant $C>0$ such that $\prob_{\bt,\s}(\| \bX_j\|_2^2-n> n)\le \exp(-Cn).$ In view of Condition~\ref{6} we have $\exp(-Cn)\le C'\frac{s^2}{p}$ for $\k_0>0$ small enough. The first result of the lemma follows. 
	
	To prove the second inequality of the lemma, note that the variables $(X_{ij}\xi_i)_{1\le i\le n}$ are independent, and they are subExponential as products of two subGaussians, \cf Lemma~2.7.7 in~\cite{vershyninbook}. Thus, Theorem~2.8.2 in~\cite{vershyninbook} yields
	\begin{align}\label{subbexp}
		\t_j^4\prob_{\bt,\s} \big(|\e_j| > |\t_j|/4\big) &\le 2\t_j^4 e^{-C\min \Big\{ \frac{n\t_j^2}{\s^2}, \frac{n|\t_j|}{\s} \Big\}} \le  \frac{2\s^4}{n^2} \sup_{x\ge0} \big[ x^4 e^{-C\min\{x^2, x\}} \big].
	\end{align}
\end{proof}
 
\begin{lemma}\label{nu}
	Let Conditions~\ref{1},~\ref{5} and~\ref{6}  hold. Then  there exist positive constants $C_1,C_2$ depending only on $L,M$ such that for $\a>0$ large enough and any $j\in \{1,\ldots, p\}$,
	\begin{equation}
		\prob_{\bt,\s} \Big(\big\{|\nu_j|>\hat{\tau}/2 \big\} \cap \calA \Big) \le C_1 \frac{s^2}{p}
	\end{equation}
	and
	\begin{equation}
     	\t_j^4\prob_{\bt,\s} \big(\big\{|\nu_j| > |\t_j|/4\big\} \cap \calA \big) \le C_2 \frac{\s^4}{n^2},
     \end{equation}     
     where $\bnu, \htau$ and $\calA$ are defined in~\eqref{def_lem4} and~\eqref{def_calA} respectively.
\end{lemma}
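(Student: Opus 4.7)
The plan is to mirror the strategy of the proof of Lemma~\ref{eps}. Writing $\bX_j$ for the $j$-th column of $\XX_3$ and $Y_i = \sum_{k\neq j} u_k X_{3,ik}$, I first decompose
\begin{equation*}
\nu_j \;=\; u_j\Big(1-\frac{\|\bX_{j}\|_2^2}{n}\Big) \;-\; \frac{1}{n}\sum_{i=1}^n X_{3,ij}\, Y_i.
\end{equation*}
Since $(\hat\bt,\hat\sigma)$ is built from the first subsample $(\XX_1,\bY_1)$ and $\calA$ is measurable with respect to this subsample, I work conditionally on $(\hat\bt,\hat\sigma)$, so $\bu=\hat\bt-\bt$ becomes a fixed vector from the viewpoint of $\XX_3$. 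On $\calA$ one has $\|\bu\|_2\le\sigma$ (hence $|u_j|\le\sigma$) and $\hat\sigma\ge\sigma$, so $\hat\tau\ge\tau$ and $\{|\nu_j|>\hat\tau/2\}\cap\calA\subseteq\{|\nu_j|>\tau/2\}\cap\calA$.

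The decomposition is chosen so that both pieces are sums of i.i.d.\ centred subExponentials for fixed $\bu$. Indeed, $\|\bX_j\|_2^2/n$ averages $n$ i.i.d.\ squares of $M$-subGaussians with mean $1$; and within each row $i$ the variable $X_{3,ij}$ is independent of the entries $(X_{3,ik})_{k\neq j}$, so $X_{3,ij} Y_i$ is a zero-mean subExponential with $\|X_{3,ij} Y_i\|_{\psi_1}\le CM^2\|\bu\|_2\le CM^2\sigma$ on $\calA$. Two applications of Bernstein's inequality (Theorem~2.8.2 in~\cite{vershyninbook}) combined with a union bound yield, on $\calA$ and for every $r>0$,
\begin{equation*}
\prob_{\bt,\s}\big(|\nu_j|>r \,\big|\, \hat\bt,\hat\sigma\big) \;\le\; 4\exp\Big(-c\min\Big\{\tfrac{n r^2}{\sigma^2},\tfrac{n r}{\sigma}\Big\}\Big),
\end{equation*}
with $c>0$ depending only on $M$.

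For the first assertion I take $r=\tau/2=(\alpha/2)\sigma\sqrt{\log(1+p/s^2)/n}$: the exponent becomes $c\min\{\alpha^2\log(1+p/s^2)/4,\ \alpha\sqrt{n\log(1+p/s^2)}/2\}$, which, for $\alpha$ large enough and $\kappa_0$ small enough in Condition~\ref{6} (so that $n$ dominates $\log(1+p/s^2)$), produces a bound $\le C_1 s^2/p$ after integration against $\mathbf 1_\calA$. For the second assertion I take $r=|\t_j|/4$, which gives $4\exp(-c\min\{n\t_j^2/(16\sigma^2),\,n|\t_j|/(4\sigma)\})$; multiplying by $\t_j^4$ and substituting $x=\sqrt{n}|\t_j|/\sigma$ converts this into $(\sigma^4/n^2)\cdot 4 x^4\exp(-c\min(x^2,x\sqrt{n}))$, uniformly bounded by $C_2\sigma^4/n^2$ via $\sup_{x\ge 0} x^4 e^{-c'\min(x^2,x)}<\infty$, exactly as in the treatment of $\e_j$ at the end of Lemma~\ref{eps}.

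The main obstacle is choosing the right decomposition of $\nu_j$: the naive route of conditioning on $\bY=(Y_i)_i$ and exploiting the subGaussianity of $X_{3,ij}$ given $\bY$ would force one to also control $\|\bY\|_2$, whose tails on the rare event $\{\|\bY\|_2^2 \gg n\|\bu\|_2^2\}$ are too heavy to allow the clean $\sup_x x^4 e^{-c\min(x^2,x)}<\infty$ trick for the second assertion. Using the within-row independence of column $j$ from the remaining columns turns $n^{-1}\sum_i X_{3,ij}Y_i$ directly into a Bernstein-type sum of i.i.d.\ subExponentials with explicit $\psi_1$-norm $CM^2\|\bu\|_2$, which is exactly what the proof of Lemma~\ref{eps} exploits for $\e_j$ via the products $X_{3,ij}\xi_i$; once this observation is in place, the two bounds follow verbatim from the analogous computations there.
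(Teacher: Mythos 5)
Your proposal is correct and follows essentially the same route as the paper: the identical decomposition of $\nu_j$ into the diagonal term $u_j(1-\|\bX_j\|_2^2/n)$ and the cross term $n^{-1}\bX_j^T\XX_{-j}\bu_{-j}$, the same use of within-row independence to view the cross term (conditionally on $\bu$) as a sum of i.i.d.\ centred subExponentials with $\psi_1$-norm of order $M^2\|\bu\|_2$, Bernstein's inequality for both pieces on the event $\calA$ where $\|\bu\|_2\le\s$ and $\hat\tau\ge\tau$, and the $\sup_{x\ge0}x^4e^{-c\min(x^2,x)}<\infty$ device for the second assertion. No substantive differences.
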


\begin{proof}
	For brevity, in this proof we denote by $\bX_j$ the $j$th column of matrix $\XX_3$,  
	  by $\mathbb{X}_{-j}$ the matrix obtained by removing the $j$th column (that is, $\bX_j$) from $\XX_3$, and by $\bu_{-j}$ the vector obtained by removing the $j$th entry from $\bu$.

	For any $j\in\{1,\ldots,p\}$, we have the upper bound
	\begin{equation}\label{decnu}
	 |\nu_j| \le \Big| \Big(1 - \frac{\|\bX_j\|_2^2}{n}\Big) u_j \Big| + \Big| \frac{1}{n} \bX_j^T\XX_{-j} \bu_{-j} \Big|.
	\end{equation}
	Using the definition of $\calA$ and Bernstein's inequality (Theorem~2.8.2 in~\cite{vershyninbook}) we find
	\begin{align}\label{eq:vspom0}
		\prob_{\bt,\s}\Big(\big\{\big| (1-n^{-1}\|\bX_j\|^2_2)u_j| > \hat{\tau}/4 \big\} \cap \calA\Big) &\le \prob_{\bt,\s}\Big( \big|1-n^{-1}\|\bX_j\|^2_2\big| > \tau/(4\s) \Big) \\
		&\le  2 e^{-Cn\min \{ {\tau^2}/{\s^2}, {\tau}/{\s} \}} \le C's^2/p, \nonumber
	\end{align}
	where the last inequality holds for $\a$ large enough due to the fact that $\tau/\s<1$ for $\kappa_0$ small enough, \cf Condition~\ref{6}. 
	
	Next, notice that $\bX_j^T\XX_{-j} \bu_{-j}=\sum_{i=1}^n \eta_i$ where the random variables $\eta_i:= X_{ij}\sum_{k\neq j} X_{ik} u_k $, $i=1,\dots,n$,  are independent and zero-mean conditionally on $\bu$. Furthermore, conditionally on $\bu$, each variable $\sum_{k\neq j} X_{ik}u_k$ is $\|\bu\|_2M$-subGaussian. Thus, the subGaussian norms (\cf \cite{vershyninbook}) of $X_{ij}$ and of 
	$\sum_{k\neq j} X_{ik}u_k$, conditionally on $\bu$, do not exceed $C_0M$ and $C_0M \|\bu\|_2$, respectively, where $C_0>0$ is an absolute constant. 
	By Lemma~2.7.7 in~\cite{vershyninbook}, the subExponential norm of $\eta_i$ (for any fixed $\bu$) does not exceed $C_0^2M^2\|\bu\|_2$. Therefore, conditionally on $\bu$, we can apply Theorem~2.8.1 in~\cite{vershyninbook}. This yields
		\begin{align}\label{eq:vspom}
	\prob_{\bt,\s}\Big( \Big| \frac{1}{n} \bX_j^T\XX_{-j} \bu_{-j} \Big|> \tau/4 \suchthat \bu \Big) &\le  2 e^{-Cn \min \{ {\tau^2}/{\|\bu\|_2^2}, {\tau}/{\|\bu\|_2} \}} .
	\end{align}
	It follows that
	\begin{align}\label{eq:vspom1}
		\prob_{\bt,\s}\Big( \Big\{ \Big| \frac{1}{n} \bX_j^T\XX_{-j} \bu_{-j} \Big|> \hat{\tau}/4 \Big\} \cap \calA \Big) &\le \prob_{\bt,\s}\Big(  \Big\{ \Big| \frac{1}{n} \bX_j^T\XX_{-j} \bu_{-j} \Big|> \tau/4 \Big\}   \cap \calA \Big)
		\\ & \nonumber
		\le \esp_{\bt,\s}\Big[\prob_{\bt,\s}\Big(  \Big| \frac{1}{n} \bX_j^T\XX_{-j} \bu_{-j} \Big|> \tau/4 \suchthat \bu  \Big) \fcar_{\| \bu\|_2\le \s} \Big]
		 \\ & \le 2 e^{-Cn\min \{ {\tau^2}/{\s^2}, {\tau}/{\s} \}} \le C's^2/p.\nonumber
	\end{align}
	The first assertion of the lemma follows.
	
	To prove the second assertion, we use again the bound~\eqref{decnu}.
Considering the first term on the right hand side of~\eqref{decnu} and reasoning analogously to  \eqref{eq:vspom0} 
we get
	\begin{align}\label{eq:1term}
		\t_j^4 \prob_{\bt,\s}\Big(\big\{\big|(1-n^{-1}\|\bX_j\|^2_2)u_j| > |\t_j|/8 \big\} \cap \calA\Big) \le 2\t_j^4e^{-C\min \Big\{ \frac{n\t_j^2}{\s^2}, \frac{n|\t_j|}{\s} \Big\}} \le C' \frac{\s^4}{n^2},
	\end{align}
	where the last inequality is obtained as in \eqref{subbexp}.
To handle the second term on the right hand side of~\eqref{decnu}, we note that, analogously to \eqref{eq:vspom} and \eqref{eq:vspom1}, 
	\begin{align}
			\t_j^4\prob_{\bt,\s}\Big( \Big| \frac{1}{n} \bX_j^T\XX_{-j} \bu_{-j} \Big|>  |\t_j|/8 \suchthat \bu \Big) &\le  2 \t_j^4 e^{-Cn \min \{ {\t_j^2}/{\|\bu\|_2^2}, {|\t_j|}/{\|\bu\|_2} \}},
	\end{align}
	and 
	\begin{align}\label{eq:vspom2}
	\t_j^4 \prob_{\bt,\s}\Big( \Big\{ \Big| \frac{1}{n} \bX_j^T\XX_{-j} \bu_{-j} \Big|> |\t_j|/8 \Big\} \cap \calA \Big) &\le 
	2 \t_j^4 
	e^{-C\min \Big\{ \frac{n\t_j^2}{\s^2}, \frac{n|\t_j|}{\s} \Big\}} \le  C'\frac{\s^4}{n^2}.
	\end{align}
	The second assertion of the lemma  follows from \eqref{eq:1term} and \eqref{eq:vspom2}.
\end{proof}

\subsubsection{Proof of Theorem~\ref{theo:pgen:sparse12}} 

Recall that, for brevity, $\hat Q_{S}^{HD}=\hat Q$.  We follow an argument close to the proof of Theorem~\ref{theo:plen:sparse}.
Similarly to~\eqref{theo:plen_sparse_1}, we first write
\begin{align}\label{th5:dec}
	\esp_{\bt,\s}\Big[\big(\hat{Q}-Q(\bt)\big)^2 \fcar_{\calA}\Big]&\le 2\esp_{\bt,\s}\Big(\fcar_{\calA} \sum_{j\in \mathcal S^\complement} a_j(\hat \bt)\fcar_{|\tilde{\t}_j|>\hat{\tau}} \Big)^2 
	\\
	&\qquad + 2\esp_{\bt,\s}\Big( \fcar_{\calA} \sum_{j\in \mathcal S} \{ a_j(\hat \bt) \fcar_{|\tilde{\t}_j|>\hat{\tau}}-\t_j^2 \} \Big)^2,
\end{align}
where $\hat{\tau}$ is defined in~\eqref{def_lem4}, $\calA$ is defined in~\eqref{def_calA}, and  $\mathcal S$ is the support of $\bt\in B_0(s)$. 
Recall that $\calA$ and $\tilde{\bt}$ are measurable with respect to $(\XX_1,\bY_1,\XX_3,\bY_3)$, $a_j(\hat \bt)$ is measurable with respect to $(\XX_1,\bY_1,\XX_2,\bY_2)$, and $\hat{\tau}$ is measurable with respect to $(\XX_1,\bY_1)$.

Following the same lines as in deriving~\eqref{for:th:5} (now, to obtain an analog of~\eqref{for:th:51} we condition  on $(\XX_1,\bY_1,\XX_3,\bY_3)$ rather than on $(\XX_1,\bY_1)$) 
we find 
\begin{align}\label{for:th:52}
&\esp_{\bt,\s} \Big[\Big( \fcar_{\calA}\sum_{j\in \mathcal S^\complement} a_j(\hat \bt)\fcar_{|\tilde{\t}_j|>\hat{\tau}}\Big)^2 \Big]\\
&\qquad \le
\frac{C}{n^2}\esp_{\bt,\s} \Big[\fcar_{\calA}\Big((\|\bu\|_2^2+\s^2)^2 \sum_{k\in \mathcal S^\complement} \fcar_{|\tilde{\t}_k|>\hat{\tau}} + \sum_{k,k'\in \mathcal S^\complement: k'\neq k} u_{k'}^2u_k^2 \fcar_{|\tilde{\t}_{k'}|>\hat{\tau}}\fcar_{|\tilde{\t}_k|>\hat{\tau}} \Big)\Big].
\end{align}
Using \eqref{for:th:52},  the definition of $\calA$ and the fact that
\begin{align}
	\sum_{k,k'\in \mathcal S^\complement: k'\neq k} u_{k'}^2u_k^2 \fcar_{|\tilde{\t}_{k'}|>\hat{\tau}}\fcar_{|\tilde{\t}_k|>\hat{\tau}}  \fcar_{\calA} \le \|\bu\|_2^4 \sum_{j \in \mathcal{S}^\complement} \fcar_{|\tilde{\t}_j|>\hat{\tau}} \fcar_{\calA} \le \s^4 \sum_{j \in \mathcal{S}^\complement} \fcar_{|\tilde{\t}_j|>\hat{\tau}} \fcar_{\calA},
\end{align}
we get the upper bound
\begin{align}
	\esp_{\bt,\s}\Big[\fcar_{\calA} \Big(\sum_{j\in \mathcal S^\complement} a_j(\hat \bt)\fcar_{|\tilde{\t}_j|>\hat{\tau}} \Big)^2\Big] &\le C\frac{\s^4}{n^2} \esp_{\bt,\s}\Big( \fcar_{\calA} \sum_{j\in \mathcal S^\complement}\fcar_{|\tilde{\t}_j|>\hat{\tau}} \Big). 
\end{align}
Noticing that for $j\in \mathcal S^\complement$ we have $\tilde{\t}_j=\nu_j+\e_j$ and applying Lemmas~\ref{eps} and~\ref{nu} we obtain
\begin{align}\label{th5:proof1}
\esp_{\bt,\s}\Big[\fcar_{\calA} \Big(\sum_{j\in \mathcal S^\complement} a_j(\hat \bt)\fcar_{|\tilde{\t}_j|>\hat{\tau}} \Big)^2\Big] & \le C\s^4 \frac{s^2}{n^2}.
\end{align}
To analyze the second term on the right hand side of~\eqref{th5:dec}, we use a decomposition analogous to~\eqref{S2_decomp} with the difference that now we insert $\fcar_{\calA}$ under the expectation. It is easy to check that the analogs of the terms $S_1$ and $S_2$ in~\eqref{S2_decomp} are bounded quite similarly to \eqref{S22} and \eqref{S21} (we skip the details here). 
This yields
\begin{align}\label{th5:proof2}
	\esp_{\bt,\s}\Big[ \fcar_{\calA} \Big(\sum_{j\in \mathcal S} \{ a_j(\hat \bt) \fcar_{|\tilde{\t}_j|>\hat{\tau}}-\t_j^2 \} \Big)^2\Big] &\le C\Big(\frac{\s^2}{n}\|\bt\|_2^2 + \s^4 \frac{s}{n^2} 
	+ \esp_{\bt,\s}\Big[\fcar_{\calA}\Big( \sum_{j \in \mathcal{S}} \t_j^2 \fcar_{\tilde{\t}_j\le \hat{\tau}} \Big)^2  \Big] \Big).
\end{align}
Now, notice that, in view of Lemmas~\ref{eps} and~\ref{nu},
	\begin{align}
 \t_j^4\prob_{\bt,\s}\big(\big\{|\tilde{\t}_j| \le \htau, |\t_j| > 2\htau\big\} \cap \calA  \big)&\le
\t_j^4\prob_{\bt,\s}\big(\big\{|\tilde{\t}_{j}-\t_j|\ge |\t_j|/2\big\} \cap \calA  \big)
\\
&\le \t_j^4\prob_{\bt,\s}\big(\big\{|\e_{j}|\ge |\t_j|/4\big\} \cap \calA  \big) + \t_j^4\prob_{\bt,\s}\big(\big\{|\nu_{j}|\ge |\t_j|/4\big\} \cap \calA  \big)
\\
&
\le C \frac{\s^4}{n^2} .
\end{align}	
Using this inequality and acting as in \eqref{S3} we get
\begin{align}
	\esp_{\bt,\s}\Big[  \fcar_{\calA} \Big( \sum_{j \in \mathcal{S}} \t_j^2 \fcar_{\tilde{\t}_j\le \hat{\tau}} \Big)^2 \Big] &\le C s^2 \esp_{\bt,\s}\big[\htau^4 \fcar_{\calA}\big] + C \Big[ \sum_{j \in \mathcal S} \t_j^2 \prob_{\bt,\s}^{1/2}\Big( \big\{|\tilde{\t}_j| \le \htau, |\t_j| > 2\htau\big\} \cap \calA \Big) \Big]^2 \\
	&\le C'\s^4\frac{s^2}{n^2}\log^2\Big(1+\frac{p}{s^2}\Big).\label{th5:proof3}
\end{align}
Combining \eqref{th5:dec}, \eqref{th5:proof1}, \eqref{th5:proof2}, and \eqref{th5:proof3} we obtain
\begin{equation}\label{th5:proof4}
	\esp_{\bt,\s}\Big[\big(\hat{Q}-Q(\bt)\big)^2 \fcar_{\calA}\Big] \le C\Big(  \s^4\frac{s^2\log^2(1+\sqrt{p}/s)}{n^2} +\s^2\frac{\|\bt\|_2^2}{n}\Big).
\end{equation}
To conclude the proof of the first inequality of the theorem (that is, the bound on the error of $\hat Q_{S}^{HD}$), it suffices to use \eqref{th5:proof4}, Markov's inequality and the fact that, due to Propositions~\ref{proposition_derumigny} and~\ref{prop_sigma_sqs}, 
\begin{equation}\label{probA}
	\prob_{\bt,\s}(\calA) \ge 1-C\exp (-(n\wedge s\log(ep/s))/C).
\end{equation} 
The  second inequality of the theorem (that is, the bound on the error of $\hat \L_{S}^{HD}$) is deduced from the first one exactly in the same way as it is done in the proof of Theorem 8 in~\cite{Collier2017}.




\section{Proofs of the lower bounds}\label{sec:proofs:lower} 

\subsection{Proof of Theorem~\ref{th:lower1}} \label{sec:proof:lower1}

It is straightforward to check that \eqref{th:lower1_3} and \eqref{th:lower1_4} follow from \eqref{th:lower1_1} - \eqref{th:lower1_2}. Thus, we only prove \eqref{th:lower1_1} - \eqref{th:lower1_2}. 

For an integer $s$ such that $1\le s \le p$ and $u>0$, we define $\Theta(s,u)=\{\bt\in B_0(s): \|\bt\|_2\ge u\}$. Note first that it suffices to prove 
\eqref{th:lower1_1} for $s\le \sqrt{p}$ and $\rho\le r$,
where for $A>0$
\begin{equation}\label{th:lower1_5}
r=A\min\left( \sqrt{\frac{s\log(1+p/s^2)}{N}},1\right).
\end{equation}
Indeed, for $\sqrt{p}< s\le p$ we have the inclusions $$\Theta(s,\s\rho(s,N,p))\supseteq \Theta(s',\s\rho(s,N,p))\supseteq \Theta(s',\s\rho(s',N,p))$$ where $s'=\lfloor \sqrt{p} \rfloor$ is the greatest integer smaller than or equal to $\sqrt{p}$. It is easy to check that $\rho(s,N,p)$ is of the same order of magnitude for $s=s'$ as for all $\sqrt{p}< s\le p$:
$$
\rho(s,N,p)\asymp \min  \Big( \frac{p^{1/4}}{\sqrt{n}}, 1  \Big), \quad \forall \ s'\le s\le p.
$$  
It follows that it is enough to prove \eqref{th:lower1_1} for $s\le s'$, and thus for $s\le \sqrt{p}$. Furthermore, if $s\le \sqrt{p}$, we have $\log(1+p/s^2)\ge \log(1+\sqrt{p}/s)$, so that it is sufficient to prove 
\eqref{th:lower1_1} for $\rho\le r$ where $r$ is defined in \eqref{th:lower1_5}.  

Thus, in the rest of this proof we assume that $s\le \sqrt{p}$ and we show that
\eqref{th:lower1_1} holds for $\bar \rho\le r$,
where $r$ is defined in \eqref{th:lower1_5}. 

Let $\tau>0$ be defined by the formula
\begin{align}\label{eq:taudef}
\tau^2= \frac{\bar \rho^2}{1+\bar \rho^2}.
\end{align}
Denote by ${\mu_{\tau}}$ the probability measure corresponding to the uniform distribution on the set $\Theta' =\Theta'(\tau)$ of all vectors in $\RR^p$ that have exactly $s$ nonzero components, all equal to $\tau/\sqrt{s}$. Note that the support of measure ${\mu_{\tau}}$ is contained in $\Theta(s,\tau)$. In the rest of this proof, we set
$$\s^2=1-\tau^2 = \frac{1}{1+\bar \rho^2}.$$
For any test $\bar\Delta$ and $\s=\sqrt{1-\tau^2}$, we have 
\begin{align}
R(\bar\Delta,s,\rho)&\ge  \prob_{\mathbf 0,1}(\bar\Delta=1) +  \sup_{\bt \in B_0(s): \|\bt\|_2\geq  \s\bar \rho} \prob_{\bt,\s}(\bar\Delta=0)\\
&\ge  \prob_{\mathbf 0,1}(\bar\Delta=1) +  \sup_{\bt \in \Theta': \|\bt\|_2=\tau  } \prob_{\bt,\sqrt{1-\tau^2}}(\bar\Delta=0)\\
&\ge  \prob_{\mathbf 0,1}(\bar\Delta=1) +  \int  \prob_{\bt, \sqrt{1-\tau^2}} (\bar\Delta=0){\mu_{\tau}}(d\bt)
= \prob_{\mathbf 0,1}(\bar\Delta=1) +  \mathbb{P}_{\mu_{\tau}} (\bar\Delta=0),\label{eq:proof_lower01}
\end{align}
where we have used the fact that $\tau=\s \bar \rho$ and the notation $\mathbb{P}_{\mu_{\tau}}(\cdot)= \int \prob_{\bt, \sqrt{1-\tau^2}}(\cdot){\mu_{\tau}}(d\bt)$. 
The theorem now follows from the last display and Lemma \ref{lem3aa} given below. 

To prove Lemma \ref{lem3aa}, we need the following two lemmas that may be of independent interest.

\begin{lemma}\label{lem3}
	Let $\tau\in (0,1)$ and $\s=\sqrt{1-\tau^2}$. Let $\bxi$ be standard normal and let ${\mathbb X}$ be a random matrix independent of $\bxi$. Then, for any $\bt, \bt'\in \RR^p$ we have
	\begin{align}
	&\int \frac{d\prob_{\bt, \s} d\prob_{\bt', \s}}{d \prob_{\mathbf 0,1}}=\l^{N/2} {\bf E}_{\mathbb X}\exp\left( \l \Big[\langle {\mathbb X}\bt,{\mathbb X}\bt'\rangle- \frac{\tau^2}{2}(\|{\mathbb X}\bt\|_2^2+\|{\mathbb X}\bt'\|_2^2) \Big] \right),
	\end{align}
	where $\l=(1-\tau^4)^{-1}$ and  ${\bf E}_{\mathbb X}$ denotes the expectation with respect to the distribution of ${\mathbb X}$.
\end{lemma}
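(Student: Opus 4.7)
The plan is to compute the integral as a two-stage calculation: first condition on $\mathbb{X}$ (so $\bY$ is Gaussian), perform an explicit Gaussian integral in $\bY$, then take the expectation over $\mathbb{X}$.

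Since the distribution of $\mathbb{X}$ is the same under $\prob_{\bt,\s}$, $\prob_{\bt',\s}$ and $\prob_{\mathbf 0,1}$, the $\mathbb{X}$-marginals cancel in the Radon--Nikodym derivatives. Conditionally on $\mathbb{X}$, the law of $\bY$ under $\prob_{\bt,\s}$ is $\mathcal N(\mathbb{X}\bt,\s^2 \mathbb I_N)$ and under $\prob_{\mathbf 0,1}$ it is $\mathcal N(0,\mathbb I_N)$. Setting $\bm=\mathbb{X}\bt$, $\bm'=\mathbb{X}\bt'$, I would therefore write
\begin{equation*}
\int \frac{d\prob_{\bt,\s}\, d\prob_{\bt',\s}}{d\prob_{\mathbf 0,1}}
= \mathbf E_{\mathbb X}\!\left[\int \frac{p_{\bm,\s}(\bY)\,p_{\bm',\s}(\bY)}{p_{0,1}(\bY)}\, d\bY\right],
\end{equation*}
where $p_{\bmu,\s}$ denotes the $\mathcal N(\bmu,\s^2\mathbb I_N)$ density.

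Next, I would expand the exponent of the inner integrand and collect the quadratic and linear terms in $\bY$. With $\s^2=1-\tau^2$, the coefficient of $-\|\bY\|_2^2/2$ works out to $c:=(1+\tau^2)/(1-\tau^2)$, and the linear part is $\langle \bY, (\bm+\bm')/(1-\tau^2)\rangle$. Completing the square in $\bY$ and doing the resulting Gaussian integral produces a prefactor $c^{-N/2}$, which combines with the $\s^{-2N}=(1-\tau^2)^{-N}$ coming from the two normal densities to give exactly
\begin{equation*}
(1-\tau^2)^{-N}\cdot\bigl((1-\tau^2)/(1+\tau^2)\bigr)^{N/2}=(1-\tau^4)^{-N/2}=\l^{N/2}.
\end{equation*}

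Finally, I would simplify the remaining exponent. After completion of the square it has the form $\tfrac{1}{2c(1-\tau^2)^2}\|\bm+\bm'\|_2^2-\tfrac{1}{2(1-\tau^2)}(\|\bm\|_2^2+\|\bm'\|_2^2)$. Since $\tfrac{1}{c(1-\tau^2)^2}=\l$ and $\tfrac{\l}{2}-\tfrac{1}{2(1-\tau^2)}=-\tfrac{\l\tau^2}{2}$, expanding $\|\bm+\bm'\|_2^2$ gives precisely $\l\bigl[\langle\bm,\bm'\rangle-\tfrac{\tau^2}{2}(\|\bm\|_2^2+\|\bm'\|_2^2)\bigr]$. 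Substituting back $\bm=\mathbb{X}\bt$, $\bm'=\mathbb{X}\bt'$ and taking $\mathbf E_{\mathbb X}$ yields the claimed identity.

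There is no real obstacle here—the argument is a direct Gaussian computation; the only thing to be careful about is the bookkeeping of the $\tau$-dependent constants, in particular verifying the two simplifications $\s^{-2N}c^{-N/2}=\l^{N/2}$ and $\tfrac{1}{c(1-\tau^2)^2}=\l$ that are responsible for the clean form of the final answer.
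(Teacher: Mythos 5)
Your proof is correct and takes essentially the same route as the paper's: condition on $\mathbb{X}$, evaluate the Gaussian integral over $\bY$ by completing the square, and simplify the $\tau$-dependent constants (the paper's auxiliary constant $a=(2/\s^2-1)^{-1}$ is exactly your $1/c$). The two key identities you single out, $\s^{-2N}c^{-N/2}=\l^{N/2}$ and $\tfrac{1}{c(1-\tau^2)^2}=\l$, both check out, so the bookkeeping is sound.
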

\begin{proof} Set $a=({2}/{\s^2}-1)^{-1}>0$. We have
	\begin{align}
	&\int \frac{d\prob_{\bt, \s} d\prob_{\bt', \s}}{d \prob_{\mathbf 0,1}}= \frac{1}{(2\pi )^{N/2}\s^{2N}}{\bf E}_{\mathbb X}\int_{\RR^N} \exp\Big(-\frac{1}{2\s^2}(\|y-{\mathbb X}\bt\|_2^2 +\|y-{\mathbb X}\bt'\|_2^2)+\frac{\|y\|_2^2}{2}\Big)dy\\
	&=\frac{1}{(2\pi )^{N/2}\s^{2N}}{\bf E}_{\mathbb X}\int_{\RR^N} \exp\Big(-\frac{\|y\|_2^2}{2a}-\frac{1}{2\s^2}\Big[-2\langle y, {\mathbb X}(\bt+\bt')\rangle+\|{\mathbb X}\bt\|_2^2+\|{\mathbb X}\bt'\|_2^2\Big] \Big)dy\\
	&=\frac{1}{(2\pi )^{N/2}\s^{2N}} {\bf E}_{\mathbb X}\Big[   \exp\Big(-\frac{1}{2\s^2}\big(\|{\mathbb X}\bt\|_2^2+\|{\mathbb X}\bt'\|_2^2\big)\Big) \int_{\RR^N} \exp\Big(-\frac{\|y\|_2^2}{2a}+\frac{\langle y, {\mathbb X}(\bt+\bt')\rangle}{\s^2}\Big) dy \Big].\end{align}
	Here, 
	\begin{align}
	&\int_{\RR^N} \exp\Big(-\frac{\|y\|_2^2}{2a}+\frac{\langle y, {\mathbb X}(\bt+\bt')\rangle}{\s^2}\Big) dy
	=(2\pi a)^{N/2} \exp\Big( \frac{\| {\mathbb X}(\bt+\bt')\|_2^2}{2\s^2(2-\s^2)}\Big),
	\end{align}
	and the lemma follows.
\end{proof}

\begin{lemma}\label{lem3a}
	Under the assumptions of Lemma \ref{lem3},  let ${\mathbb X}$ be a matrix with i.i.d. standard normal entries and $\|\bt\|_2=\|\bt'\|_2=\tau$. Then, for any $\bt, \bt'\in \RR^p$ we have
	\begin{align}
	\int \frac{d\prob_{\bt, \s} d\prob_{\bt', \s}}{d \prob_{\mathbf 0,1}}& 
	=\frac{1}{(1-\langle \bt ,\bt' \rangle )^{N}}.
	\end{align}
\end{lemma}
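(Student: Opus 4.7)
The plan is to reduce the $p$-dimensional expectation from Lemma~\ref{lem3} to a product of $N$ identical two-dimensional Gaussian integrals, one for each row of $\mathbb{X}$, and then evaluate each factor via the classical moment-generating-function formula for Gaussian quadratic forms.

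First, I would apply Lemma~\ref{lem3}, which gives
\begin{align}
\int \frac{d\prob_{\bt, \s} d\prob_{\bt', \s}}{d \prob_{\mathbf 0,1}}=\l^{N/2} {\bf E}_{\mathbb X}\exp\Big( \l \Big[\langle {\mathbb X}\bt,{\mathbb X}\bt'\rangle- \tfrac{\tau^2}{2}(\|{\mathbb X}\bt\|_2^2+\|{\mathbb X}\bt'\|_2^2) \Big] \Big),
\end{align}
with $\l=(1-\tau^4)^{-1}$. Writing $\mathbb{X}$ row by row as $\mathbb{X} = (\bX^1, \dots, \bX^N)^T$ with i.i.d.\ rows $\bX^i \sim \mathcal{N}(\mathbf{0}, \mathbb{I}_p)$, all three bilinear/quadratic quantities inside the exponent split additively across $i$, so the expectation factorises into $N$ copies of the single-row expectation. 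Setting $c := \langle \bt, \bt' \rangle$ and introducing $U = \langle \bX^1, \bt \rangle$, $V = \langle \bX^1, \bt' \rangle$, the pair $(U,V)$ is a centred Gaussian with covariance matrix $\Sigma = \begin{pmatrix} \tau^2 & c \\ c & \tau^2 \end{pmatrix}$ (here I use $\|\bt\|_2 = \|\bt'\|_2 = \tau$).

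Then I would compute
\begin{align}
{\bf E}\exp\Big( \l \big[UV - \tfrac{\tau^2}{2}(U^2+V^2)\big]\Big) = \frac{1}{\sqrt{\det(\mathbb{I}_2 - \Sigma B)}},
\end{align}
where $B = \l \begin{pmatrix} -\tau^2 & 1 \\ 1 & -\tau^2 \end{pmatrix}$, using the standard Gaussian quadratic-form identity (valid provided $\mathbb{I}_2 - \Sigma B$ is positive definite, which holds since $|c|\le \tau^2 <1$). A direct $2\times 2$ computation gives $\Sigma B = \l\begin{pmatrix} c-\tau^4 & \tau^2(1-c)\\ \tau^2(1-c) & c-\tau^4\end{pmatrix}$, so, after substituting $\l(1-\tau^4) = 1$, the diagonal entries of $\mathbb{I}_2 - \Sigma B$ collapse to $(1-c)/(1-\tau^4)$ and the off-diagonal entries to $-\tau^2(1-c)/(1-\tau^4)$, yielding
\begin{align}
\det(\mathbb{I}_2 - \Sigma B) = \frac{(1-c)^2 - \tau^4(1-c)^2}{(1-\tau^4)^2} = \frac{(1-c)^2}{1-\tau^4}.
\end{align}

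Finally, I would raise the single-row result to the $N$-th power and multiply by $\l^{N/2}$:
\begin{align}
\l^{N/2} \Big(\frac{\sqrt{1-\tau^4}}{1-c}\Big)^N = \frac{\l^{N/2}(1-\tau^4)^{N/2}}{(1-c)^N} = \frac{1}{(1-\langle \bt, \bt'\rangle)^N},
\end{align}
which is the claimed identity. No step is a real obstacle; the only piece that requires care is the $2\times 2$ determinant computation, where keeping track of the cancellation induced by the precise choice $\l = (1-\tau^4)^{-1}$ is what produces the clean final answer.
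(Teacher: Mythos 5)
Your proof is correct and follows essentially the same route as the paper: both factorise the expectation from Lemma~\ref{lem3} over the $N$ i.i.d.\ rows and then evaluate a single-row Gaussian quadratic-form expectation. The only cosmetic difference is that you work with the two-dimensional marginal $(U,V)$ and compute $\det(\mathbb{I}_2-\Sigma B)$ directly, whereas the paper keeps the rank-two matrix $\mathbb{D}$ in $\RR^p$ and uses its two nonzero eigenvalues; the computations are equivalent.
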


\begin{proof}
	Lemma \ref{lem3} and the fact that  ${\mathbb X}$ is a matrix with i.i.d. standard normal entries imply
	\begin{align}\label{calcul}
	\int \frac{d\prob_{\bt, \s} d\prob_{\bt', \s}}{d \prob_{\mathbf 0,1}}
	&= \l^{N/2}\Big( {\bf E}_{\mathbb X}\exp \Big[\l \big( ({\bf X}^1 \bt ) ({\bf X}^1 \bt' )- \frac{\tau^2}{2}(({\bf X}^1 \bt)^2+ ({\bf X}^1 \bt' )^2)\big) \Big] \Big)^N
	\\
	&=  \l^{N/2} \Big( \esp \exp(\lambda Z^T {\mathbb D} Z) \Big)^N,
	\end{align}
	where  $Z\sim {\cal N}({\bf 0}, {\mathbb I}_p)$ and
	$${\mathbb D}=\frac12 \Big[(\bt\bt'^T+\bt'\bt^T)- \tau^2(\bt \bt^T+\bt'\bt'^T) \Big].$$
	The rank of the matrix ${\mathbb D}$ is at most two and, for all $\bt$ and $\bt'$ such that $\|\bt\|_2=\|\bt'\|_2=\tau$, this matrix has the eigenvalues 
	$$\l_1=\frac{\langle \bt, \bt' \rangle +\tau^2}{2}(1-\tau^2)> 0,$$
	$$\l_2=\frac{\langle \bt, \bt' \rangle -\tau^2}{2}(1+\tau^2)\le 0.$$ 
	In particular, since $\tau<1$ we have $\l_1\l \le {\tau^2}/{(1+\tau^2)} <1/2$. It follows that
	\begin{align}\label{khi2}
	\esp \exp(\lambda Z^T {\mathbb D} Z)&= \frac{1}{\sqrt{(1-2\l_1 \l)(1-2\l_2\l)}}.
	\end{align}
	using the fact that $\mathbb E_{G \sim \mathcal N(0,1)} \exp(t G^2) = (1 - 2t)^{-1/2}$ for any $t<1/2$.

	Combining~\eqref{calcul}  with~\eqref{khi2} we get after some algebra that, for all $\bt$ and $\bt'$ such that $\|\bt\|_2=\|\bt'\|_2=\tau$,
	$$
	\int \frac{d\prob_{\bt, \s} d\prob_{\bt', \s}}{d \prob_{\mathbf 0,1}}= \left(\frac{\l}{(1-2\l_1 \l)(1-2\l_2\l)}\right)^{N/2}
	=\frac{1}{(1-\langle \bt ,\bt' \rangle )^{N}}.
	$$
\end{proof}

\begin{lemma}\label{lem3aa}  Assume that ${\mathbb X}$ is a matrix with i.i.d. standard normal entries and $\bxi$ is standard normal. Let $\delta>0$. 
	Let $\tau^2=\frac{\bar \rho^2}{1+\bar \rho^2}$ as in~\eqref{eq:taudef} with $\bar \rho\le r$, where $r$ is defined in~\eqref{th:lower1_5} with $A=\sqrt{\frac12 \log\big((1-\d)^2+1\big)}$, and let ${\mu_{\tau}}$ be the prior measure defined above in this subsection. Then for any test $\bar\Delta$  we have 
	\begin{align}
	\prob_{\mathbf 0,1}(\bar\Delta=1) +  \mathbb{P}_{\mu_{\tau}} (\bar\Delta=0) \ge \delta,
	\label{eq:proof_lower02}	
	\end{align}
	where $\mathbb{P}_{\mu_{\tau}}(\cdot)= \int \prob_{\bt, \sqrt{1-\tau^2}}(\cdot){\mu_{\tau}}(d\bt)$. 
\end{lemma}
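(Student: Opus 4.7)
The plan is to apply the standard Bayesian reduction via the chi-square divergence between the null and the prior mixture. First I would invoke the elementary identity
\[
\inf_{\bar\Delta}\bigl\{\prob_{\mathbf 0,1}(\bar\Delta=1)+\mathbb{P}_{\mu_\tau}(\bar\Delta=0)\bigr\} = 1-\mathrm{TV}(\prob_{\mathbf 0,1},\mathbb{P}_{\mu_\tau}),
\]
so that it suffices to show $\mathrm{TV}(\prob_{\mathbf 0,1},\mathbb{P}_{\mu_\tau})\le 1-\delta$. Cauchy--Schwarz yields the well-known bound $\mathrm{TV}\le\tfrac12\sqrt{\chi^2}$, where $\chi^2:=\int (d\mathbb{P}_{\mu_\tau}/d\prob_{\mathbf 0,1})^2\,d\prob_{\mathbf 0,1}-1$. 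Expanding the square of the mixture density, applying Fubini, and invoking Lemma~\ref{lem3a} give the clean formula
\[
\chi^2+1 \;=\; \int\!\!\int \frac{\mu_\tau(d\bt)\,\mu_\tau(d\bt')}{(1-\langle\bt,\bt'\rangle)^{N}}.
\]

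Second, I would exploit the combinatorial structure of $\mu_\tau$. For independent draws $\bt,\bt'\sim\mu_\tau$, one has $\langle\bt,\bt'\rangle=\tau^2 H/s$, where $H:=|S\cap S'|$ is the intersection size of the two supports and follows the hypergeometric law $\mathrm{Hyp}(p,s,s)$. Using $-\log(1-y)\le y/(1-\tau^2)$ for $y\in[0,\tau^2]$ together with the algebraic identity $\tau^2/(1-\tau^2)=\bar\rho^2$, one obtains
\[
(1-\tau^2 H/s)^{-N}\le\exp(tH),\qquad t:=N\bar\rho^2/s.
\]
Hoeffding's comparison inequality for sampling without replacement gives $\mathbb{E}[e^{tH}]\le\mathbb{E}[e^{tH'}]=(1-s/p+(s/p)e^t)^s$ with $H'\sim\mathrm{Bin}(s,s/p)$, and $\log(1+u)\le u$ then yields
\[
\chi^2+1\le\exp\!\bigl((s^2/p)(e^t-1)\bigr).
\]

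Third, the choice of $r$ in \eqref{th:lower1_5} forces $t\le A^2\log(1+p/s^2)$ in both regimes of the minimum: if $s\log(1+p/s^2)\le N$, this follows directly from $\bar\rho^2\le A^2 s\log(1+p/s^2)/N$; otherwise $\bar\rho^2\le A^2$ and $N/s\le\log(1+p/s^2)$. Since $A^2\le\tfrac12\log 2<1$, the concavity inequality $(1+x)^\alpha\le 1+\alpha x$ for $\alpha\in[0,1]$, applied with $\alpha=A^2$ and $x=p/s^2$, gives
\[
(s^2/p)(e^t-1)\le (s^2/p)\bigl((1+p/s^2)^{A^2}-1\bigr)\le A^2,
\]
and the elementary bound $e^x\le 1+2x$ valid on $[0,\tfrac12]$ then produces $\chi^2\le 2A^2=\log((1-\delta)^2+1)\le(1-\delta)^2$. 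Hence $\mathrm{TV}\le(1-\delta)/2\le 1-\delta$, which proves the lemma.

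The delicate point lies in the sharpness of the hypergeometric moment bound, where three cancellations must cooperate: the identity $\tau^2/(1-\tau^2)=\bar\rho^2$ eliminates a factor inside the exponent; Hoeffding's comparison replaces the sampling-without-replacement MGF by its binomial analogue (whose $s$-fold product structure turns into an additive exponent); and the concave linearization $(1+p/s^2)^{A^2}-1\le A^2(p/s^2)$ precisely cancels the $s^2/p$ prefactor. Without all three cancellations, one would pick up a logarithmic slack that destroys the target constant in the separation threshold.
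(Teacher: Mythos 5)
Your proof is correct and follows essentially the same route as the paper: the second-moment (chi-square) reduction, the closed form $\int d\prob_{\bt,\s}d\prob_{\bt',\s}/d\prob_{\mathbf 0,1}=(1-\langle\bt,\bt'\rangle)^{-N}$ from Lemma~\ref{lem3a}, an exponential linearization of this quantity, and the hypergeometric moment-generating-function bound. The only differences are cosmetic: you write out explicitly (via Hoeffding's sampling-without-replacement comparison and $(1+x)^{\alpha}\le 1+\alpha x$) the steps the paper delegates to Lemmas~6 and~7 and Theorem~4 of \cite{carpentier2018minimax}, and your use of $-\log(1-y)\le y/(1-\tau^2)$ together with the factor $\tfrac12$ in $\mathrm{TV}\le\tfrac12\sqrt{\chi^2}$ gives slightly more slack than the paper's $1-x\ge e^{-2x}$, but both land at the required bound $\delta$.
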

\begin{proof}
	Set $\s=\sqrt{1-\tau^2}$. It follows from Lemmas 6 and 7 in  \cite{carpentier2018minimax} that, for any test $\bar \Delta$,  
	\begin{align}\label{eq:proof_lower1}
	\prob_{\mathbf 0,1}(\bar\Delta=1) +  \mathbb{P}_{\mu_{\tau}} (\bar\Delta=0)  &\ge  1-\left(\mathbb E_{(\bt, \bt') \sim   \mu_{\tau}^2}
	\Big(\int \frac{d\prob_{\bt, \s} d\prob_{\bt', \s}}{d \prob_{\mathbf 0,1}}\Big) -1\right)^{1/2},
	\end{align}
	where $\mathbb E_{(\bt, \bt' )\sim   \mu_{\tau}^2}$ denotes the expectation with respect to the distribution of the pair $(\bt, \bt')$ such that $\bt$ and $\bt'$ are independent and each of them is distributed according to ${\mu_{\tau}}$. 
	As ${\mu_{\tau}}$ is supported on $\T(s,\tau)$ we have $\|\bt\|_2=\tau$ for all $\bt$ in the support of ${\mu_{\tau}}$.  Since $\rho \le r\le A$ we have
	$|\langle \bt ,\bt' \rangle|\le \tau^2\le A^2 \le 1/2$ for all $\bt$ and $\bt'$ in the support of ${\mu_{\tau}}$. Thus,  using Lemma \ref{lem3a} and the fact that $1-x\ge e^{-2x}$ for $0<x<1/2$ we find 
	\begin{align}\label{calcul20}
	\mathbb E_{(\bt, \bt') \sim   \mu_{\tau}^2}
	\Big(\int \frac{d\prob_{\bt, \s} d\prob_{\bt', \s}}{d \prob_{\mathbf 0,1}}\Big) 
	&\le \mathbb E_{(\bt, \bt') \sim   \mu_{\tau}^2}
	\exp \big(2N\langle \bt ,\bt' \rangle\big)\\
	&=\mathbb E_{(\bt, \bt') \sim   \mu_{\tau}^2}\exp\Big(2N\tau^2 s^{-1} \sum_{j=1}^p \fcar_{\bt_j\ne 0} \fcar_{\bt'_j\ne 0} \Big).
	\end{align}
	The last expectation involves a hypergeometric $(p,s,s)$ random variable and it is bounded in the same way as in the proof of Theorem 4 in \cite{carpentier2018minimax} (see also Lemma 1 in \cite{Collier2017}), which yields
	\begin{align}\label{calcul2}
	\mathbb E_{(\bt, \bt') \sim   \mu_{\tau}^2}
	\Big(\int \frac{d\prob_{\bt, \s} d\prob_{\bt', \s}}{d \prob_{\mathbf 0,1}}\Big) 
	&\le \exp(2A^2).
	\end{align}
	Finally, the bounds~\eqref{eq:proof_lower1}, \eqref{calcul2} and the definition of $A$ imply that
	\begin{align}
	\prob_{\mathbf 0,1}(\bar\Delta=1) +  \mathbb{P}_{\mu_{\tau}} (\bar\Delta=0) &\ge  1-\left(\exp(2A^2) -1\right)^{1/2} = \d'.
	\end{align}
	
\end{proof}

\subsection{Proof of Theorem~\ref{th:lower2} }  \label{sec:proof:lower2}

We use the following lemma. Its proof is simple and is therefore omitted. 

\begin{lemma}\label{lem:q}
	Let $\kappa>0, \e>0$, and $z\ge 1$. Then for 
	\begin{align}\label{qqq}
	q=\min(\max(\e^2 z, \e\kappa), \kappa^2)
	\end{align}
	we have
	\begin{align}
	q &=
	\left\{
	\begin{array}{lcl}
	\e\kappa & \text{if}&  \kappa^2 \ge \e^2 z^2,\\
	\e^2 z  & \text{if} &  \e^2 z < \kappa^2 \le \e^2 z^2,\\
	\kappa^2 &\text{if} & \kappa^2 < \e^2 z .
	\end{array}
	\right.
	\end{align}
\end{lemma}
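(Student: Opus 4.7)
The plan is to carry out an elementary case analysis, first simplifying the inner $\max$ and then the outer $\min$, so I would organize the proof around the two comparisons that govern $q$: the comparison of $\e^2 z$ with $\e\kappa$ (which determines the max) and the comparison of the resulting max with $\kappa^2$ (which determines the min).

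First I would observe that $\e^2 z \ge \e\kappa$ is equivalent to $\kappa \le \e z$, and since $\kappa$ and $\e z$ are positive this is equivalent to $\kappa^2 \le \e^2 z^2$. Hence $\max(\e^2 z, \e\kappa) = \e\kappa$ when $\kappa^2 \ge \e^2 z^2$ and $\max(\e^2 z, \e\kappa) = \e^2 z$ when $\kappa^2 \le \e^2 z^2$ (the two agree at equality, so the overlap is harmless).

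Next I would handle the three regions separately. In the region $\kappa^2 \ge \e^2 z^2$ the max equals $\e\kappa$, and to conclude $q=\e\kappa$ one must verify $\e\kappa \le \kappa^2$, i.e.\ $\e \le \kappa$; this is where I would explicitly invoke the hypothesis $z\ge 1$, which gives $\kappa \ge \e z \ge \e$. In the remaining region $\kappa^2 \le \e^2 z^2$ the max equals $\e^2 z$, and one simply splits according to whether $\kappa^2 \ge \e^2 z$ (giving $q=\e^2 z$) or $\kappa^2 < \e^2 z$ (giving $q=\kappa^2$). I would also note that $\kappa^2 < \e^2 z$ automatically implies $\kappa^2 < \e^2 z^2$ because $z\ge 1$, so the third case of the lemma is consistent with the outer constraint.

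There is no real obstacle here; the only subtlety is the use of $z\ge 1$, which is needed both to derive $\e\le\kappa$ in the first case and to ensure that the three written regions exhaust all possibilities. I would close the proof by pointing out that the boundary values $\kappa^2 = \e^2 z^2$ and $\kappa^2 = \e^2 z$ give agreement between the relevant branches, so the labelling with strict versus non-strict inequalities in the statement is immaterial.
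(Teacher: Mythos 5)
Your case analysis is correct: the key observations — that $\max(\e^2 z,\e\kappa)$ is resolved by comparing $\kappa$ with $\e z$, and that $z\ge 1$ is what guarantees both $\e\le\kappa$ in the first regime and the consistency of the third regime — are exactly the points that need checking, and the boundary cases agree as you note. The paper itself omits the proof of Lemma~\ref{lem:q} as elementary, and your argument is precisely the intended straightforward verification.
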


To prove Theorem~\ref{th:lower2}, we first note that, as in the proof of Theorem~\ref{th:lower1}, it is enough to consider $s \le \sqrt{p}$ since for 
$s>\sqrt{p}$ the rate of estimation $q^*$ is of the same order as for $s=\lfloor\sqrt{p}\rfloor$. Next, note that for $s \le \sqrt{p}$  we have $\log(1+p/s^2)\ge\log(1+\sqrt{p}/s)$, so that  $q_*\le \min(\e^2 z + \e\kappa, \kappa^2) \le 2q$ where $q$ is given by equation~\eqref{qqq} with
$$
\e=\e(\s):=\frac{\s}{\sqrt{N}}, \quad z=z(s):= \frac{\min(s\log(1+p/s^2), N)}{\log 2},
$$
and where $\kappa$ is the bound on the $l_2$ norm of the parameter $\theta$. We have $z\ge 1$ since  $s\log(1+p/s^2)\ge \log 2$  for $1\le s\le \sqrt{p}$ - we can thus apply Lemma~\ref{lem:q}. We now prove Theorem~\ref{th:lower2} separately for the cases $\kappa^2 \ge \e^2 z^2$,  $\e^2 z < \kappa^2 \le \e^2 z^2$ and $\kappa^2 < \e^2 z$.

\paragraph{Case $\kappa^2 \ge \e^2 z^2$.} Let $c>0$. Using Lemma~\ref{lem:q} we obtain
\begin{align}
\sup_{\substack{\bt\in B_0(s): \\ \|\bt\|_2\le\kappa}} \ \sup_{\s>0} \prob_{\bt,\s}\Big(|\hat T - Q(\bt)|\ge c q_*\Big)&\ge   \sup_{\substack{\bt\in B_0(s): \\ \|\bt\|_2\le\kappa}} \ \sup_{\s>0}  \prob_{\bt,\s}\Big(|\hat T - Q(\bt)|\ge 2c \e\kappa\Big)\\
&\ge \sup_{\substack{\bt\in B_0(1): \\ \|\bt\|_2\le\kappa}} \ \prob_{\bt,1}\Big(|\hat T - Q(\bt)|\ge \frac{ 2c\kappa}{\sqrt{N}} \Big)\\
&\ge \max_{j=1,2} \prob_{\bt_j,1}\Big(|\hat T - Q(\bt_j)|\ge \frac{ 2c\kappa}{\sqrt{N}} \Big),
\end{align}
where $\bt_1=(\kappa, 0,\dots, 0)$ and $\bt_2 = (\kappa -\frac{c}{\sqrt{N}}, 0,\dots, 0)$. 

Set now $\d\in (0,1)$.  Using the above display and a standard lower bound for the maximum error of testing two hypotheses (cf., e.g., \cite[Chapter 2]{tsybakovbook}) we get that  there exists $c_\d>0$ depending only on $\d$ such that
\begin{align}
	\sup_{\substack{\bt\in B_0(s): \\ \|\bt\|_2\le\kappa}} \ \sup_{\s>0} \prob_{\bt,\s}\Big(|\hat T - Q(\bt)|\ge c_\d q_*\Big)&\ge 
	 \max_{j=1,2} \prob_{\bt_j,1}\Big(|\hat T - Q(\bt_j)|\ge \frac{ 2c_\d\kappa}{\sqrt{N}} \Big) \ge \d,
	\end{align}

\paragraph{Case $\e^2 z < \kappa^2 \le \e^2 z^2$.} Using again Lemma~\ref{lem:q} we obtain $q_*\le 2q\le  2 \e^2z$. Let $\d>0$. Recalling that $\e=\e(\s), z=z(s)$
and setting $\tau^2 = \frac{r^2}{1+r^2}, \bar \s= \sqrt{1-\tau^2} $, where $r$ is defined in equation~\eqref{th:lower1_5} with the value $A:= A(\d)$ as in Lemma \ref{lem3aa}, we have
\begin{align}
&\sup_{\substack{\bt\in B_0(s): \\ \|\bt\|_2\le\kappa}} \ \sup_{\s>0} \prob_{\bt,\s}\Big(|\hat T - Q(\bt)|\ge c q_*\Big)
\ge 
\sup_{\s>0}  \sup_{\substack{\bt\in B_0(s): \\ \|\bt\|_2\le \e(\s) \sqrt{z(s)} }} 
\  \prob_{\bt,\s}\Big(|\hat T - Q(\bt)|\ge 2c  \e^2(\s)z(s)\Big)
\\ 
&\qquad \ge  \frac12 \Big[\prob_{\mathbf 0,1}\Big(|\hat T|\ge  2c \e^2(1)z(1)\Big) + 
\sup_{\substack{\bt\in B_0(s): \\ \|\bt\|_2\le \e(\bar\s) \sqrt{z(s)} }} 
\prob_{\bt,\bar\s}\Big(|\hat T - Q(\bt)|\ge 2c \e^2(\bar\s)z(s)\Big)
\Big]
\\ 
&\qquad \ge  \frac12 \Big[\prob_{\mathbf 0,1}\Big(|\hat T|\ge  {r^2}/{4}\Big) + 
\sup_{\substack{\bt\in B_0(s): \\ \|\bt\|_2\le \e(\bar\s) \sqrt{z(s)} }} 
\prob_{\bt,\bar\s}\Big(|\hat T - Q(\bt)|\ge {r^2}/{4}\Big)
\Big]
\end{align}
if we choose $c:=c_\d>0$ such that $c\le A^2(\log 2)/8$. In the last line, we have used the relations
$$
z(1)\le z(s), \ \forall  1\le s\le \sqrt{p}, \quad   
\e^2(\bar\s) = \frac{\bar\s^2}{N}= \frac{1}{(1+r^2)N} \le  \frac{1}{N}=\e^2(1),
$$
$$
2c\e^2(1)z(s) =\frac{2cz(s)}{N} =  \frac{2c}{\log 2}  \min\Big(\frac{s\log(1+p/s^2)}{N}, 1\Big)
= \frac{2c r^2}{A^2\log 2}.
$$
Next, note that  $\{\bt\in B_0(s):  \|\bt\|_2\le \e(\bar\s) \sqrt{z(s)} \}\supset \{\bt\in B_0(s):  \|\bt\|_2\le \tau \}$. 
Indeed, 
\begin{equation}
\label{eqQ} 
\tau^2= \frac{r^2}{1+r^2}\le \frac{r^2}{(1+r^2) A^2 \log 2}  = \e^2(\bar\s)z(s),
\end{equation}
since $A<1$ (cf. definition of $A$  in Lemma \ref{lem3aa}). Thus, recalling that $\Theta'(\tau)$ is the set of all $s$-sparse vectors with $s$ components equal to $\tau/\sqrt{s}$ and all other components equal to 0 we get
\begin{align}
&\sup_{\substack{\bt\in B_0(s): \\ \|\bt\|_2\le\kappa}} \ \sup_{\s>0} \prob_{\bt,\s}\Big(|\hat T - Q(\bt)|\ge c q_*\Big)
\ge \\
&\qquad \ge  \frac12 \big[\prob_{\mathbf 0,1}\big(|\hat T|\ge  {r^2}/{4}\big) + 
\sup_{\substack{\bt\in B_0(s): \\ \|\bt\|_2\le \tau }} 
\prob_{\bt,\bar\s}\big(|\hat T - Q(\bt)|\ge {r^2}/{4}\big)
\big]
\\
&\qquad \ge  \frac12 \big[\prob_{\mathbf 0,1}\big(|\hat T|\ge  {r^2}/{4}\big) + 
\sup_{\bt\in \Theta'(\tau)} 
\prob_{\bt,\bar\s}\big(|\hat T - Q(\bt)|\ge {r^2}/{4}\big)
\big]
\\
&\qquad \ge  \frac12 \Big[\prob_{\mathbf 0,1}(\bar\Delta=1) + 
\sup_{\bt\in \Theta'(\tau)} 
\prob_{\bt,\sqrt{1-\tau^2}}(\bar\Delta=0) 
\Big]
\\
&\qquad \ge  \frac12 \Big[\prob_{\mathbf 0,1}(\bar\Delta=1) + 
\mathbb{P}_{\mu_{\tau}}(\bar\Delta=0) 
\Big]
\end{align}
where $\bar\Delta=\fcar\{|\hat T|\ge  {r^2}/{4}\}$ and the penultimate inequality is due to the fact that $Q(\bt)=\frac{r^2}{1+r^2}\ge\frac{r^2}{2} $ for all 
$\bt\in \Theta'(\tau)$. It follows from Lemma \ref{lem3aa} that the minimax risk in the last display  is greater than $\d\in (0,1)$. 

\vspace{-5mm}
\paragraph{Case $\kappa^2 < \e^2 z$.}  From Lemma~\ref{lem:q} we obtain $q_*  \le 2\kappa^2$.  Consider separately the cases $\kappa\ge \tau$ and $\kappa< \tau$. If $\kappa\ge \tau$ we have
\begin{align}
&\sup_{\substack{\bt\in B_0(s): \\ \|\bt\|_2\le\kappa}} \ \sup_{\s>0} \prob_{\bt,\s}\Big(|\hat T - Q(\bt)|\ge c q_*\Big)
\ge 
\sup_{\substack{\bt\in B_0(s): \\ \|\bt\|_2\le \kappa}}
\ \sup_{\s>0} 
\  \prob_{\bt,\s}\Big(|\hat T - Q(\bt)|\ge 2c  \e^2(\s)z(s)\Big)
\\ 
&\qquad \ge  \frac12 \Big[\prob_{\mathbf 0,1}\Big(|\hat T|\ge  2c \e^2(1)z(1)\Big) + 
\sup_{\substack{\bt\in B_0(s): \\ \|\bt\|_2\le \tau }}
\prob_{\bt,\bar\s}\Big(|\hat T - Q(\bt)|\ge 2c \e^2(\bar\s)z(s)\Big)
\Big]
\end{align}
and we use the same argument as in the case $\e^2 z < \kappa^2 \le \e^2 z^2$ to obtain the result.  On the other hand, if $\kappa< \tau$ then for any $c<1/4$ we get 
\begin{align}
&\sup_{\substack{\bt\in B_0(s): \\ \|\bt\|_2\le\kappa}} \ \sup_{\s>0} \prob_{\bt,\s}\Big(|\hat T - Q(\bt)|\ge c q_*\Big)
\ge
\sup_{\substack{\bt\in B_0(s): \\ \|\bt\|_2\le \kappa}} \ \sup_{\s>0} \prob_{\bt,\s}\Big(|\hat T - Q(\bt)|\ge 2c  \kappa^2\Big)
\\
&\qquad \ge  \frac12 \Big[\prob_{\mathbf 0,1}\Big(|\hat T|\ge  2c \kappa^2\Big) + 
\sup_{\substack{\bt\in B_0(s): \\ \|\bt\|_2\le \kappa }} 
\prob_{\bt,\sqrt{1-\kappa^2}}\Big(|\hat T - Q(\bt)|\ge 2c \kappa^2\Big)
\Big]
\\
&\qquad \ge  \frac12 \Big[\prob_{\mathbf 0,1}\Big(|\hat T|\ge  2c \kappa^2\Big) + 
\sup_{\bt\in\Theta'(\kappa)} 
\prob_{\bt,\sqrt{1-\kappa^2}}\Big(|\hat T - Q(\bt)|\ge 2c \kappa^2\Big)
\Big]
\\
&\qquad \ge  \frac12 \Big[\prob_{\mathbf 0,1}(\tilde\Delta=1) +
\sup_{\bt\in \Theta'(\kappa)} 
\prob_{\bt,\sqrt{1-\kappa^2}}(\tilde\Delta=0) 
\Big]
\\
&\qquad \ge  \frac12 \Big[\prob_{\mathbf 0,1}(\tilde\Delta=1) + 
\mathbb{P}_{\mu_{\kappa}}(\tilde\Delta=0) 
\Big],
\end{align}
where $\tilde\Delta=\fcar\{|\hat T|\ge  {\kappa^2}/{2}\}$  and the penultimate inequality is due to the fact that $Q(\bt)=\kappa^2 $ for all 
$\bt\in \Theta'(\kappa)$. The result now follows by applying Lemma \ref{lem3aa} where we replace $\tau$ by $\kappa$ and take $\bar \rho\in (0,r)$ such that $\kappa^2=\frac{\bar \rho^2}{1+\bar \rho^2}$.

\section*{Acknowledgments}

The work of O. Collier was supported by the French National Research Agency (ANR) under the grant Labex MME-DII (ANR-11-LBX-0023-01).  The work of A.B.Tsybakov was supported by GENES and by ANR under the grant Labex Ecodec (ANR-11-LABEX-0047). 
The work of A. Carpentier is partially supported by the Deutsche Forschungsgemeinschaft (DFG) Emmy Noether grant MuSyAD (CA 1488/1-1), by the DFG - 314838170, GRK 2297 MathCoRe, by the DFG GRK 2433 DAEDALUS (384950143/GRK2433), by the DFG CRC 1294 'Data Assimilation', Project A03, and by the UFA-DFH through the French-German Doktorandenkolleg CDFA 01-18 and by the UFA-DFH through the French-German Doktorandenkolleg CDFA 01-18 and by the SFI Sachsen-Anhalt for the project RE-BCI.

\bibliographystyle{plain}
\small{
\bibliography{biblio}
}

\end{document}